\theoremstyle{plain}
\newtheorem{theorem}{Theorem}[section]
\newtheorem{lemma}[theorem]{Lemma}
\newtheorem{proposition}[theorem]{Proposition}
\newtheorem{corollary}[theorem]{Corollary}
\newtheorem*{theorem*}{Theorem}
\newtheorem{definition}[theorem]{Definition}
\theoremstyle{definition}
\newtheorem{example}[theorem]{Example}
\newtheorem{notation}[theorem]{Notation}
\theoremstyle{remark}
\newtheorem{remark}[theorem]{Remark}
\newcommand{\mycomment}[1]{%
}%
\def\Q{{\bf Q}}
\def\Z{{\bf Z}}
\def\C{{\bf C}}
\def\N{{\bf N}}
\def\R{{\bf R}}
\def\O{{\mathcal{O}}}
\def\H{{H}}
\def\Qbar{{\overline{{\bf Q}}}}
\def\zp{{\Z_p}}
\def\zle{{\mathbf{Z}_\ell^\times}}
\def\qle{{\mathbf{Q}_\ell^\times}}
\def\qp{{\Q_p}}
\def\Hom{\mathrm{Hom}}
\def\A{\mathbf{A}}
\def\Af{{\bf A}_{f}}
\def\Sc{\mathcal{S}}
\def\epsilon{\varepsilon}
\def\det{\mathrm{det}}
\def\Sh{{\operatorname{Sh}}}
\def\GSp{{\mathrm{GSp}}}
\def\GSpin{{\mathrm{GSpin}}}
\def\Sp{{\mathrm{Sp}}}
\def\GL{\mathrm{GL}}
\def\G{\mathbf{G}}
\def\H{\mathbf{H}}
\def\B{\mathbf{B}}
\def\l{\ell}
\def\matrix#1#2#3#4{{\big(\begin{smallmatrix}#1&#2\\ #3&#4\end{smallmatrix}\big)}}
\def\Alog{\mathscr{A}_{\overline{X}}^*({\mathrm{log}} \, D)}
\def\Ard{\mathscr{A}_{rd}^*}
\def\Asi{\mathscr{A}_{si}^*}
\def\Dist{\mathscr{D}_{si}}
\title[Tempered currents and Deligne cohomology of Shimura varieties]{Tempered currents and Deligne cohomology of Shimura varieties, with an application to $\GSp_6$}
\author[J.I. Burgos Gil]{Jos\'e Ignacio Burgos Gil}
\address{J.I.B.G.: Nicol\'as Cabrera 13-15 28049 Madrid, Spain}
\email{burgos@icmat.es }
\author[A. Cauchi]{Antonio Cauchi}
\address{A.C.: Dept. of Mathematics, School of Science, Tokyo Institute of Technology, 2-12-1 Ookayama, Meguro-ku, Tokyo, 152-8551 Japan}
\email{cauchi.a.aa@m.titech.ac.jp}
\author[F. Lemma]{Francesco Lemma}
\address{F.L.: Universit\'e Paris Cit\'e, CNRS, IMJ--PRG, b\^atiment Sophie Germain, case 7012, 75205 Paris Cedex 13, France}
\email{francesco.lemma@imj-prg.fr}
\author[J. Rodrigues Jacinto]{Joaqu\'in Rodrigues Jacinto}
\address{J.R.J.: Aix--Marseille Universit\'e, CNRS, I2M, Campus de Luminy, Avenue de Luminy, Case 930, 13288 Marseille Cedex 9, France}
\email{joaquin.rodrigues-jacinto@univ-amu.fr}
\subjclass{19F27,14G35,11F67}
\def\@tocline#1#2#3#4#5#6#7{\relax
  \ifnum #1>\c@tocdepth % then omit
  \else
    \par \addpenalty\@secpenalty\addvspace{#2}%
    \begingroup \hyphenpenalty\@M
    \@ifempty{#4}{%
      \@tempdima\csname r@tocindent\number#1\endcsname\relax
    }{%
      \@tempdima#4\relax
    }%
    \parindent\z@ \leftskip#3\relax \advance\leftskip\@tempdima\relax
    \rightskip\@pnumwidth plus4em \parfillskip-\@pnumwidth
    #5\leavevmode\hskip-\@tempdima
      \ifcase #1
       \or\or \hskip 1em \or \hskip 2em \else \hskip 3em \fi%
      #6\nobreak\relax
    \dotfill\hbox to\@pnumwidth{\@tocpagenum{#7}}\par
    \nobreak
    \endgroup
  \fi}
 \thanks{J.~I.~Burgos was partially supported by MINISTERIO
DE CIENCIA E INNOVACION research projects PID2019-108936GB-C21 and 
ICMAT Severo Ochoa project CEX2019-000904-S. A. Cauchi was supported
by the European Research Council (ERC) under the European Union's
Horizon 2020 research and innovation programme (grant agreement
No. 682152) as well as by the NSERC grant RGPIN-2018-04392, Concordia Horizon postdoc fellowship n.8009 and the JSPS Postdoctoral Fellowship for Research in Japan. F. Lemma was supported by the ANR ClapClap. J. Rodrigues
Jacinto was financially supported by the ERC-2018-COG-818856-HiCoShiVa and by the project ANR-19-CE40-0015
COLOSS}
\begin{document}

\begin{abstract}
We provide a new description of Deligne--Beilinson cohomology for any Shimura variety in terms of tempered currents. This is particularly useful for computations of regulators of motivic classes and hence to the study of Beilinson conjectures. As an application, we construct classes in the middle degree plus one motivic cohomology of Siegel sixfolds and we compute their image by Beilinson higher regulator in terms of Rankin-Selberg type automorphic integrals. Using results of Pollack and Shah, we relate the integrals to noncritical special values of the degree $8$ Spin $L$-functions, as predicted by Beilinson conjectures.
\end{abstract}

\maketitle
\selectlanguage{english}

\setcounter{tocdepth}{2}

%\newpage
\tableofcontents

\section{Introduction}

Beilinson conjectures are vast generalizations of the analytic class number formula. They relate special values of $L$-functions of motives to motivic cohomology classes via higher regulators. Very few cases of these conjectures are known (e.g. \cite{Beilinson}, \cite{Beilinson2}, \cite{Deninger1}, \cite{Deninger2}, \cite{Ramakrishnan}, \cite{brunault-chida}, \cite{Kings}, \cite{lemmarf}, \cite{PollackShahU21}), and they remain one of the main open problems in arithmetic geometry. \\

In this article, we give a new explicit description of Deligne--Beilinson cohomology for an arbitrary Shimura variety that provides a natural setting for studying Beilinson conjectures. In particular, this solves a gap appearing in several works on them.
%In this article, we define Hodge complexes of tempered currents on smooth projective complex varieties $\overline{X}$ endowed with a normal crossings divisor $D$ and show that they calculate the cohomology of the open variety $X = \overline{X} - D$ together with its Hodge filtration and real structure. Using these complexes, we give a new explicit description of Deligne--Beilinson cohomology for an arbitrary Shimura variety. This provides a natural setting for studying Beilinson conjectures for Shimura varieties and, in particular, solves a gap appearing in several works on them. \\
As an application, we construct classes in the middle degree plus one motivic cohomology of Siegel sixfolds and, relying on  our new description of Deligne--Beilinson cohomology, we prove a formula relating their image under Beilinson regulator to certain adelic integrals of Rankin-Selberg type. These integrals are known to compute noncritical special values of $L$-functions of automorphic forms for $\GSp_{6}$, and hence our results give further evidence for Beilinson conjectures. \\

\subsection{Tempered currents and Deligne cohomology}

One successful approach to Beilinson conjectures for automorphic motives is the interpretation of Rankin--Selberg integrals as a Poincar\'e pairing between certain class in  Deligne cohomology coming from motivic cohomology (via Beilinson regulator) and the differential form associated with a cuspidal automorphic form. While applying this approach (cf. \cite[\S 7.4]{nekovar}), one encounters three difficulties:

\begin{enumerate}
\item The construction of potentially interesting classes in motivic cohomology.
\item The calculation of Beilinson regulator of these classes in terms of certain adelic integrals.
\item The calculation of these integrals as automorphic $L$-functions.
\end{enumerate}

In every case known by the authors, points $(1)$ and $(3)$ are closely related and get inspiration from the automorphic side: one starts from an integral representation of the $L$-function of a cuspidal automorphic form that admits a geometric incarnation, such as certain Rankin--Selberg type integrals for cohomological cuspidal  forms of a group admitting a Shimura variety. Concerning $(2)$, one way to compute the Poincar\'e duality pairing is to express Deligne cohomology of a Shimura variety $X$ in terms of currents that can be naturally evaluated against the cuspidal differential form. When the differential form can be taken to be holomorphic (as in the cases of $\GL_2$ or $\GL_2 \times \GL_2$), this is not particularly problematic as it extends to a toroidal compactification $\overline{X}$ and one can use Jannsen's work \cite{Jannsen} which describes Deligne cohomology of $X$ by currents on $\overline{X}$. In higher dimensional cases, however, one is forced to work with cuspidal differential forms which are not holomorphic and it is not clear whether one can extend them or not to the boundary (to the best of our knowledge this might not be case), but they are always rapidly decreasing along the boundary $D := \overline{X} - X$.\\

The main contribution of this article is the development of theoretical tools to solve step $(2)$ in a general setting. Our novel idea is to give a description of Deligne--Beilinson cohomology in terms of \textit{tempered currents} on $\overline{X}$, i.e. sheaves of continuous linear forms on smooth differential forms on $X$ which are rapidly decreasing along $D$. The main advantage of such a description is that these complexes of currents will not only naturally pair against rapidly decreasing differential forms by construction, but will also be covariant with respect to pushforward by proper maps.  These two properties are crucial for applications to the calculation of complex regulators in many well known constructions. \\

%What we wanted to stress in our paper is that, in every article that we are aware of concerning Beilinson conjectures in a non--holomorphic setting (\cite{Kings}, \cite{LLZ2}, \cite{lemmarf}, \cite{PollackShahU21}) there was a crucial mistake at step (2). Our main contribution in this article is the development of theoretical tools to solve this step in a general setting. In fact, we realized about the mistake in the literature while thinking about the application to the $\GSp_6$ case and through conversations with Pollack and Shah. Nevertheless this second result concerning the special case of the symplectic group $\GSp_6$ was included in order to give a particular new application of our techniques and to show how our techniques indeed apply to step (2) above, it was for us secondary.\\

%One successful approach to Beilinson conjectures is the interpretation of Rankin-Selberg integrals as a Poincar\'e pairing between certain class in  Deligne cohomology coming from motivic cohomology (via Beilinson regulator) and the differential form associated with a cuspidal automorphic form. 

Let us now describe these results in more detail. Let $\overline{X}$ be a proper smooth complex variety, $D \subseteq \overline{X}$ be a normal crossing divisor, $X = \overline{X} - D$ and denote by $j : X \to \overline{X}$ the natural inclusion. In his seminal work \cite{DeligneII}, Deligne constructed a complex $\Omega_{\overline{X}}^*(\log \, D)$ of differential forms with logarithmic singularities along $D$, which is equipped with Hodge and weight filtrations $F$ and $W$. Moreover, if $\Omega^*_X$ denotes the sheaf of holomorphic differential forms on $X$, then there are natural quasi-isomorphisms of complexes $Rj_*\C \xrightarrow{\sim} Rj_*\Omega^*_X \xleftarrow{\sim} \Omega^*_{\overline{X}}(\log D)$ and $(\Omega^*_{\overline{X}}(\log \, D), W, F)$ is a mixed Hodge complex inducing the canonical mixed Hodge structure on the cohomology of $X$. The results of Deligne have been consequently extended by Navarro \cite{Navarro} and the first named author \cite{Burgos}, who showed that the mixed Hodge structure on the cohomology can be calculated by means of complexes of analytic and smooth differential forms with logarithmic singularities at the boundary. We will denote by $\mathscr{A}_{\overline{X}}^*(\log \, D)$ the complex of smooth differential forms with logarithmic singularities along $D$ constructed in \cite{Burgos}. This complex has also the advantage of capturing the underlying real structure that is not seen in $\Omega_{\overline{X}}(\log \, D)$, as well as to provide a resolution by fine sheaves, as they admit partitions of unity. \\

We now introduce some bigger sheaves. Locally around any point, one can find a coordinate system $(z_1, \hdots, z_k, z_{k+1}, \hdots z_{d})$ such that $\overline{X}$ is isomorphic to a polydisc $\Delta^d_r$ of dimension $d$ and some radius $r > 0$ and  that the normal crossing divisor $D$ is given by the equations $z_1 \hdots z_k = 0$. Slowly increasing (resp. rapidly decreasing) functions on $X$ are then defined locally by asking that for some $N \geq 0$ (resp. for all $N \leq 0$), there exists some constant $C$ (possibly depending on $N$) such that
\begin{equation} \label{EqIntrosi} |f(z) | \leq C \left( \prod_{i = 1}^k |\log|z_i|| \right)^N,
\end{equation}
and similar conditions for the derivatives of $f$ (cf. Definition \ref{Deflog}). Then one defines in an analogous way (cf. Definition \ref{Deflogdifferentialforms}) complexes $\mathscr{A}_{si}^*$, resp. $\mathscr{A}_{rd}^*$, of sheaves on $\overline{X}$ of slowly increasing and rapidly decreasing differential forms. These are complexes of fine sheaves equipped with a Hodge structure (given as usual by the type of a differential form) and with a real structure (given by real valued smooth differential forms). A new object introduced in this article is the complex $\Dist^*$ of sheaves on $\overline{X}$ of tempered (or slowly increasing) currents. Rapidly decreasing differential forms are naturally equipped with a Fr\'echet topology and we define $\Dist^{p, q}$ as the sheaves $U \mapsto \Gamma_c(U, \mathscr{A}^{d-p, d-q}_{rd})^*$ of continuous linear forms on compactly supported sections on $U$ of rapidly decreasing differential forms, where $U \subseteq \overline{X}$ is an open set. It is also a complex of fine sheaves and it is equipped with a Hodge filtration as well as with a natural real structure. Our first main result is the following.

\begin{theorem} [Theorem \ref{Propsicurrents}] \label{IntroTheoC}
The natural inclusions
\[ (\Omega^*_{\overline{X}}(\log \, D), F) \to (\mathscr{A}^*_{\overline{X}}(\log \, D), F) \to (\mathscr{A}^*_{si}, F) \to (\Dist^*, F) \]
are filtered quasi-isomorphisms. Moreover, the last two quasi-isomorphisms are compatible for the corresponding real structures.
\end{theorem}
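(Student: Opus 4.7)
The plan is to reduce all three filtered quasi-isomorphisms to local computations on a polydisc, then to treat the three maps in turn. Since each complex in the chain consists of fine sheaves (admitting partitions of unity compatible with the growth conditions defining them), a morphism between two such complexes is a filtered quasi-isomorphism as soon as it induces filtered isomorphisms on hypercohomology over a basis of small polydisc neighborhoods of points of $\overline{X}$. Thus it suffices to fix coordinates $(z_1,\ldots,z_d)$ around a point of $D$, work on $\overline{X}=\Delta^d$ with $D=\{z_1\cdots z_k=0\}$ and $X=(\Delta^*)^k\times\Delta^{d-k}$, and verify that each of the three inclusions induces, at the level of global sections over $\Delta^d$, a filtered quasi-isomorphism computing $H^*(X,\C)$ with its Hodge filtration.

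For the first inclusion $(\Omega^*_{\overline X}(\log D),F)\hookrightarrow(\mathscr{A}^*_{\overline X}(\log D),F)$ we invoke the work of Navarro and Burgos Gil: on the polydisc, both complexes compute $H^*(X,\C)$ with its Hodge filtration, the quotient being exact in each Hodge graded piece by a local $\bar\partial$-Poincar\'e lemma. For the second inclusion $(\mathscr{A}^*_{\overline X}(\log D),F)\hookrightarrow(\mathscr{A}^*_{si},F)$ we use the local Poincar\'e lemmas of Harris--Phong and Harris--Zucker which produce, for a closed slowly increasing form, a slowly increasing primitive of the difference with a suitable log form; the argument can be carried out $\bar\partial$-by-$\bar\partial$, so it preserves the Hodge filtration, and it is manifestly compatible with the real structure since all operations are real.

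The third inclusion is the genuinely new point. The morphism $\mathscr{A}^{p,q}_{si}\to\mathscr{D}^{p,q}$ is defined by
\[ \omega\ \mapsto\ \Bigl(\eta\mapsto \int \omega\wedge\eta\Bigr),\qquad \eta\in\Gamma_c(U,\mathscr{A}^{d-p,d-q}_{rd}), \]
which is well defined and continuous because the pairing of a slowly increasing form with a compactly supported rapidly decreasing form is integrable with control by finitely many of the Fr\'echet seminorms. It preserves the Hodge bigrading (hence $F$) and sends real forms to real-valued functionals on real test forms, so respects the real structure. To prove it is a quasi-isomorphism we again reduce to $\Delta^d$: the target has to be shown to compute $H^*(X,\C)$ with the correct Hodge filtration. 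Concretely, this requires a filtered Poincar\'e lemma for tempered currents on the polydisc, stating that a $d$-closed element $T\in F^p\mathscr{D}^*$ is the boundary of an element of $F^p\mathscr{D}^{*-1}$, and that its cohomology class is represented by an element of $F^p\mathscr{A}^*_{si}$.

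The main obstacle is precisely this local Poincar\'e lemma. The strategy is a regularization-and-retraction argument adapted to the boundary divisor: choose, in the local coordinates, a smoothing kernel that is itself rapidly decreasing at $D$ and compactly supported away from it, so that convolution with it sends tempered currents to slowly increasing forms and commutes with $\partial$ and $\bar\partial$ (hence preserves the Hodge filtration). Pairing this with the explicit homotopy operators used by Burgos Gil for $\mathscr{A}^*_{si}$ yields, for every $T\in F^p\mathscr{D}^*$, a filtered primitive modulo a smooth slowly increasing representative. Combined with the already established filtered quasi-isomorphism $\mathscr{A}^*_{\overline X}(\log D)\to\mathscr{A}^*_{si}$, this identifies the cohomology of $F^p\mathscr{D}^*$ with $F^p H^*(X,\C)$ and concludes the proof.
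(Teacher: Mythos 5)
The first two inclusions are handled essentially as the paper does: reduce to a polydisc, use Burgos Gil for $\Omega^*_{\overline X}(\log D)\hookrightarrow\mathscr{A}^*_{\overline X}(\log D)$, and the $\overline\partial$-Poincar\'e lemma of Harris--Phong/Harris--Zucker on the graded pieces for $\mathscr{A}^*_{\overline X}(\log D)\hookrightarrow\mathscr{A}^*_{si}$. No issue there.

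For the third inclusion $\mathscr{A}^*_{si}\hookrightarrow\mathscr{D}^*$ there is a genuine gap. You acknowledge that the crux is a filtered $\overline\partial$-Poincar\'e lemma for tempered currents, but the ``regularization-and-retraction'' strategy you propose is not carried out, and the difficulties it must overcome are precisely the difficulties of the whole theorem. Concretely: (i) a smoothing kernel on the punctured polydisc must be built so that the mollified current is slowly increasing; convolution does not interact naturally with the boundary divisor, and ``rapidly decreasing at $D$ and compactly supported away from it'' is not a specification of a kernel. (ii) The de Rham regularization identity $R_\epsilon T - T = dA_\epsilon T + A_\epsilon dT$ involves the full differential $d$, not $\overline\partial$; splitting the homotopy so as to preserve the Hodge filtration is nontrivial and not addressed. (iii) Even granting a $\overline\partial$-homotopy in positive degrees, one still has to identify the kernel of $\overline\partial:\mathscr{D}^{0,0}\to\mathscr{D}^{0,1}$ with $\mathcal{O}_{\overline X}$, which requires arguing both that a $\overline\partial$-closed tempered current is a holomorphic function on $X$ (elliptic regularity) and that this function is slowly increasing, hence extends holomorphically across $D$; you do not mention this step.

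The paper takes a different and more self-contained route: it shows that the Bochner--Martinelli operator $K$ is \emph{continuous} on rapidly decreasing $(0,q)$-forms and satisfies $\overline\partial K+K\overline\partial=\mathrm{id}$ there, then dualizes $K$ to obtain a continuous $\overline\partial$-homotopy on tempered currents directly, together with a separate argument (via $\overline\partial$-regularity plus a growth estimate) identifying the degree-zero kernel. Your regularization idea could perhaps be made rigorous, but as written it leaves unproved exactly the quantitative statements (existence of a kernel compatible with the log structure, preservation of the Hodge bigrading and of growth conditions by the homotopy) that constitute the new content of the theorem.
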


The first quasi-isomorphism is one of the main results of \cite{Burgos}. The proof of the quasi-isomorphism $ (\Omega_{\overline{X}}^*(\log \, D), F) \to (\mathscr{A}_{si}^*, F)$ follows from \cite{HarrisZuckerIII} and \cite{KatoMatsubaraNakayama}. The last one does not seem to have been yet considered in the literature and is the one that will be most useful for applications to complex regulators. We prove this result by using a variant of the Bochner-Martinelli homotopy operator on the space of rapidly decreasing differential forms. As an application, we also give a description of the compactly supported cohomology of $X$ and its Hodge filtration in terms of rapidly decreasing differential forms on $\overline{X}$ (cf. Proposition \ref{PropOnCSC}), fixing a gap in \cite{Harrisdbarcohomology}, \cite{HarrisZuckerIII}. \\

We now explain the applications of Theorem \ref{IntroTheoC} to Deligne cohomology. For $p \in \Z$, let $\R(p)$ denote the subgroup $(2\pi i)^p \R$ of $\C$ and denote by the same symbol the constant sheaf with value $\R(p)$ on $X$. We recall the definition of Deligne--Beilinson cohomology of $X$. For any $p \in \Z$ and $n \in \N$, the cohomology groups $H^n_\mathcal{D}(X, \R(p))$ of $X$ with coefficients in $\R(p)$ are defined as the hypercohomology groups of the complex
\begin{equation}
\R(p)_{\mathcal{D}} := \text{cone}(Rj_*\R(p) \oplus F^p\Omega^*_{\overline{X}}(\log D) \rightarrow Rj_* \Omega^*_X)[-1],
\end{equation}
where the arrow is given by the difference of the natural maps. As notation suggests, $H^n_\mathcal{D}(X, \R(p))$ only depends only on $X$ and not on the choice of $\overline{X}$. Denote by $\Dist^*(\overline{X})$ the complex of global sections of $\Dist^*$, let $\mathscr{D}^*_{si, \R(p-1)}$ be the complex of sheaves of $\R(p-1)$-valued currents and denote by $\mathscr{D}^*_{si, \R(p-1)}(\overline{X})$ the corresponding complex of global sections. Immediately from Theorem \ref{IntroTheoC} we obtain the following description of Deligne--Beilinson cohomology.
 
\begin{corollary} [Theorem \ref{TheoremAHCcurrents}]  \label{IntroCoroAHC}
We have
\[ \R(p)_{\mathcal{D}} \simeq \mathrm{cone} \left( F^p \Dist^* \to \mathscr{D}^*_{si, \R(p-1)} \right)[-1]. \]
In particular, we have
\[ H^n_{\mathcal{D}}(X, \R(p)) \simeq \frac{\{ (S, T) : dS = 0, dT = \pi_{p-1}(S) \}}{d(\widetilde{S}, \widetilde{T})}, \]
where $(S, T) \in F^p \Dist^n(\overline{X}) \oplus \mathscr{D}^{n-1}_{si, \R(p - 1)}(\overline{X})$ and $d(\widetilde{S}, 
\widetilde{T}) = (d S, dT - \pi_{p - 1}(S))$.
\end{corollary}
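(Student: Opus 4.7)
The plan is to bootstrap from the analogous description of Deligne--Beilinson cohomology provided in \cite{Burgos} via the complex $\Alog$, and then to transport it to the current complex $\mathscr{D}^*$ by means of the filtered quasi-isomorphism of Theorem \ref{IntroTheoC}. Recall that $(\Alog, F)$ is a complex of fine sheaves on $\overline X$ carrying an underlying real structure $\mathscr{A}^*_{\overline X, \R}(\log D)$, and Burgos has shown that Deligne--Beilinson cohomology is computed by the simple complex associated to the difference map $F^p \Alog \oplus \mathscr{A}^*_{\overline X, \R(p-1)}(\log D) \to \Alog$; equivalently, and more useful here, it is computed by $\mathrm{cone}(\pi_{p-1}: F^p \Alog \to \mathscr{A}^*_{\overline X, \R(p-1)}(\log D))[-1]$ after taking global sections, where $\pi_{p-1}$ denotes projection onto the $\R(p-1)$-real part.

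The second step is to apply Theorem \ref{IntroTheoC}: the filtered quasi-isomorphism $\Alog \hookrightarrow \mathscr{D}^*$ (factoring through $\Asi$) restricts to quasi-isomorphisms $F^p \Alog \to F^p \mathscr{D}^*$ (by $F$-filtration compatibility) and $\mathscr{A}^*_{\overline X, \R(p-1)}(\log D) \to \mathscr{D}^*_{\R(p-1)}$ (by real-structure compatibility). A five-lemma argument applied to the long exact sequences of mapping cones then yields a quasi-isomorphism of the corresponding cone complexes at the sheaf level. To obtain the same conclusion after taking global sections, one must verify that $\mathscr{D}^*$ is a complex of fine (or at any rate $\Gamma$-acyclic) sheaves on $\overline X$. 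This should follow from the fact that a smooth partition of unity on $\overline X$ acts on compactly supported rapidly decreasing test forms in $\Ard$ by multiplication, and transposes to a partition of unity on the topological duals $\mathscr{D}^*$, yielding the desired fineness.

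Once this is in place, the first equality of the corollary follows. The second identity is obtained by unwinding the cone complex $\mathrm{cone}(\pi_{p-1}: F^p \mathscr{D}^*(\overline X) \to \mathscr{D}^*_{\R(p-1)}(\overline X))[-1]$: its degree-$n$ part is $F^p \mathscr{D}^n(\overline X) \oplus \mathscr{D}^{n-1}_{\R(p-1)}(\overline X)$, with differential $(S, T) \mapsto (dS,\, dT - \pi_{p-1}(S))$ up to signs, from which the explicit cocycle condition $dS = 0,\ dT = \pi_{p-1}(S)$ and the coboundary formula of the statement follow. The main obstacle I anticipate is the functional-analytic verification that $\mathscr{D}^*$ is acyclic for global sections: although the heuristic that partitions of unity transpose to partitions of unity on currents is transparent, the details involving the Fr\'echet topologies on sections of $\Ard$ and extension by zero from open subsets of $\overline X$ require care.
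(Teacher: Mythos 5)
Your proposal follows essentially the same route as the paper: transport Burgos's cone description of $\R(p)_{\mathcal{D}}$ through the chain of filtered quasi-isomorphisms of Theorem~\ref{IntroTheoC}, the paper using $\mathscr{A}^*_{si}$ as the intermediate complex (via Proposition~\ref{DBcoh-diff-form}) rather than $\mathscr{A}^*_{\overline{X}}(\log\,D)$, but these two complexes are themselves filtered quasi-isomorphic so the routes coincide. The $\Gamma$-acyclicity concern you flag is already dispatched in the paper where the tempered currents $\mathscr{D}^*$ are introduced: they are modules over $\mathscr{A}^0_{si}$ and therefore admit partitions of unity (hence are fine sheaves), which is exactly the ``transposition'' heuristic you describe made precise.
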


Technically speaking, the above complex of currents calculates Deligne homology, that we implicitly identify with Deligne cohomology via Poincar\'e duality. The explicit description given by Corollary \ref{IntroCoroAHC} has the pleasant property of being covariant with respect to closed embeddings, which is key for applications as we now explain.

\subsection{Regulators of Siegel sixfolds and special values of Spin
  \texorpdfstring{$L$}{L}-functions}

Let $\G = \GSp_{6}$ be the symplectic similitude group of rank $3$. Denote by $\Sh_{\G}$ the six dimensional Shimura variety associated with $\G$. These Shimura varieties and their cohomology play a prominent role in the study of arithmetic aspects of cuspidal automorphic representations of $\G(\A)$ and their associated Galois representations. \\

Fundamental objects used in most of the approaches to Beilinson conjectures are modular units. These are elements in the motivic cohomology groups $H^1_\mathcal{M}(\Sh_{\GL_2}, \overline{\Q}(1)) \cong \mathcal{O}(\Sh_{\GL_2})^\times \otimes_\Z \overline{\Q}$ of the modular curves $\Sh_{\GL_2}$, which can be seen as motivic incarnations of Eisenstein series. Indeed, by the second Kronecker limit formula, their logarithm is related to limiting values of some real analytic Eisenstein series. 
Using these modular units, and inspired by the work of Pollack and Shah \cite{PollackShah}, we construct natural cohomology classes
\[ \mathrm{Eis}_{\mathcal{M}}(\Phi_f) \in H^{7}_\mathcal{M}(\Sh_{\G}(U), \overline{\Q}(4)), \]
attached to suitable Schwartz-Bruhat functions $\Phi_f$ on $\A_f^2$ and where $U \subseteq \G(\Af)$ denotes an appropriate level structure. The construction goes as follows. Let $F$ denote a real \'etale quadratic $\Q$-algebra, i.e. $F$ is either a totally real quadratic extension of $\Q$ or $\Q \times \Q$.
Denote by $\GL_{2,F}^* /\Q$ the subgroup scheme of ${\rm Res}_{F/\Q} \GL_{2,F}$ sitting in the Cartesian diagram 
\[ \xymatrix{ 
\GL_{2,F}^* \ar@{^{(}->}[r] \ar[d] & {\rm Res}_{F/\Q} \GL_{2,F} \ar[d]^{{\rm det}} \\ 
\mathbf{G}_m  \ar@{^{(}->}[r] & {\rm Res}_{F/\Q} \mathbf{G}_{{\rm m},F}.
}
\]
Let $\H$ denote the group 
$$
\H := \GL_2 \boxtimes  \GL_{2,F}^* = \{(g_1, g_2) \in \GL_2 \times  \GL_{2,F}^* \; | \; {\rm det}(g_1) = {\rm det}(g_2) \}.
$$
Then one has an embedding $\iota: \H \hookrightarrow \G$. For small enough level $U \subseteq \G(\A_f)$, this embedding induces a closed embedding
\[ \iota : \Sh_\H(U \cap \H(\A_f)) \to \Sh_\G(U) \]
of codimension $3$. Letting $\mathrm{pr}_1 : \H \to \GL_2$ denote the projection to the first factor, and  $u(\Phi_f) \in H^1_{\mathcal{M}}(\Sh_{\GL_2}, \overline{\Q}(1))$ be a modular unit associated with $\Phi_f$, one defines
$ \mathrm{Eis}_{\mathcal{M}}(\Phi_f) := \iota_*(\mathrm{pr}_1^* (u(\Phi_f)))$.    \\

Now let $\pi = \pi_\infty \otimes \pi_f$ be a cuspidal automorphic representation of $\G(\A)$  with trivial central character, for which $\pi_\infty$ is a discrete series of Hodge type $(3,3)$  and such that $\pi_f$ has a nonzero vector fixed by $U$. associated with a cusp form $\Psi = \Psi_\infty \otimes \Psi_f \in \pi_\infty \otimes \pi_f^U$ such that $\Psi_\infty$ is a highest weight vector of one minimal $K_\infty$-type of $\pi_\infty$, there is a harmonic differential form $\omega_\Psi$ on $\Sh_{\G}(U)$ of type $(3,3)$.
Via a careful study of Deligne--Beilinson cohomology, we define a natural pairing
\[ \langle \,\, , \omega_\Psi \rangle : H^{7}_{\mathcal{D}}(\Sh_{\G}(U)/\R, \R(4)) \otimes_{\Q} \overline{\Q} \rightarrow \C \otimes_{\Q} \Qbar{}, \]
where $H^*_{\mathcal{D}}(\Sh_{\G}(U)/\R, \R(-))$ denotes the Deligne--Beilinson cohomology group of $\Sh_{\G}(U)$.

Recall the existence of Beilinson regulator
\[  r_\mathcal{D}: H^{7}_{\mathcal{M}}(\Sh_{\G}(U), \overline{\Q}(4)) \to  H^{7}_{\mathcal{D}}(\Sh_{\G}(U)/\R, \R(4)) \otimes_{\Q} \overline{\Q}. \]
Let $\pi$ be a cohomological cuspidal automorphic representations of $\G(\A)$. In the generic sitaution, the $\pi_f$-isotypic component of $H^{7}_{\mathcal{D}}(\Sh_{\G}(U)/\R, \R(4)) \otimes_{\Q} \overline{\Q}$ has rank one (see section \ref{subsec:archimedeanLfunctionsandDelignecoho} below for more details). According to Beilinson conjectures, if the Spin $L$-function of $\pi$ is holomorphic at $s = 1$, then one expects to be able to construct nonzero motivic cohomology classes whose image under Beilinson regulator are related to special values of this $L$-function. One of the main goals of this article is to show that the classes $\mathrm{Eis}_\mathcal{M}(\Phi_f)$ satisfy this. We let $\Phi_\infty(x,y)=e^{-\pi(x^2+y^2)}$, $\Phi=\Phi_\infty \otimes \Phi_f$ and $E(h, \Phi,s)$ be the real analytic Eisenstein series on $\GL_2$ attached to $\Phi$ (cf. Equation \eqref{eisensteinseriesgl2}).

\begin{theorem}[Theorem \ref{adelicintegral}] \label{IntroTheoA} We  have
\begin{equation*}
\langle r_\mathcal{D}({\rm Eis}_{\mathcal{M}}(\Phi_f)), \omega_\Psi \rangle = C_{V}   \int_{\H(\Q) \Z_{\G}(\A) \backslash \H(\A)} E(h_1, \Phi, 0) (A \cdot\Psi)(h)dh,
\end{equation*}
where the operator $A$ is an element of $\mathcal{U}(\mathfrak{g}_\C)$ (defined precisely in \S \ref{adelicintegralsec}), the constant $C_{V}$ is an explicit volume factor depending only on $V:= U \cap \H(\A_f)$ and $dh$ is the Haar measure on $\H(\A)$ fixed in \S \ref{adelicintegralsec}.
\end{theorem}

\begin{remark}
 Our methods for proving Theorem \ref{IntroTheoA} as described below are flexible enough and translate verbatim to correct the aforementioned gap in the literature (\cite{Kings}, \cite[Proposition 5.4.6]{LLZ2} for Hilbert automorphic forms over a quadratic field, \cite{lemmarf} for $\GSp_4$ automorphic forms, and \cite{PollackShahU21} in the ${\rm GU}(2,1)$ case).
\end{remark}
The main theorem of \cite{PollackShah} gives a Rankin-Selberg formula for the Spin $L$-function of certain automorphic representations of $\G$. As a consequence of this and Theorem \ref{IntroTheoA}, we get the following result.

\begin{theorem} [Theorem \ref{regulator3}] \label{theo1intro}
Let $\pi$ be a cuspidal automorphic representation of $\G(\A)$ with trivial central character such that $\pi_\infty$ is a discrete series of Hodge type $(3,3)$. Let $\Sigma$ be a finite set of primes containing $\infty$ and the bad primes for $\pi$. Let $\Psi=\Psi_\infty \otimes \Psi_f$ be a factorizable cusp form in $\pi$ which is unramified outside $\Sigma$ and which supports a certain Fourier coefficient of type $(4 \, 2)$. Then
\[ \langle r_\mathcal{D}({\rm Eis}_{\mathcal{M}}(\Phi_f)), \omega_{\Psi} \rangle = C_V \lim_{s \to 0} \left( I_\Sigma(\Phi_\Sigma, A \cdot\Psi,s)  { L}^\Sigma(\pi, {\rm Spin} ,s) \right),\]
where ${L}^\Sigma(\pi, {\rm Spin} ,s) = \prod_{p \not\in \Sigma} {L}(\pi_p, {\rm Spin} ,s) $ and $I_\Sigma(\Phi_\Sigma,A \cdot\Psi,s)$ is the integral over the finite set of places $\Sigma$ as defined in Equation \eqref{semilcoaint}.
\end{theorem}

\begin{remark} \leavevmode
\begin{enumerate}
\item One can show (cf. Corollary \ref{regulator3coro}) that there exist a cusp form $\widetilde{\Psi} = \Psi_\infty \otimes \widetilde{\Psi}_f \in \pi$, with $\Psi$ and $\widetilde{\Psi}$ coinciding outside $\Sigma$, and a Schwartz function defining the Eisenstein series, such that the finite integral $I_\Sigma(\Phi_\Sigma,A \cdot\widetilde{\Psi},s)$ is equal to the archimedean integral $ I_\infty(\Phi_\infty, A \cdot\Psi, s)$, implying that \[ \langle r_\mathcal{D}({\rm Eis}_{\mathcal{M}}(\Phi_f)), \omega_{\widetilde{\Psi}} \rangle = C_U \lim_{s \to 0} \left(I_\infty(\Phi_\infty, A \cdot\Psi, s)  { L}^\Sigma(\pi, {\rm Spin} ,s)\right).\]
In the case where $L^\Sigma(\pi, {\rm Spin} ,s)$ does not have a pole at $s=1$, we expect the value $\lim_{s \to 0} \left(I_\infty(\Phi_\infty, A \cdot\Psi, s)  { L}^\Sigma(\pi, {\rm Spin} ,s)\right)$ to be equal, up to some nonzero constant, to the leading term of the Taylor expansion of ${ L}^\Sigma(\pi, {\rm Spin} ,s)$ at $s=0$ (cf. Remark \ref{remarkontaylorexpansion} for a more detailed discussion). We would like to point out that, thanks to \cite[Proposition 12.1]{GanGurevich},  this archimedean integral can be made nonzero at arbitrary $s=s_0$ if one has some freedom on the choice of $\Phi_\infty$ and $\Psi_\infty$. This implies that the archimedean integral does not vanish identically, however the main crux to improve this formula is to calculate it when $\Psi_\infty$ is the highest weight vector in the minimal $K_\infty$-type of $\pi_\infty$. In the case where $L^\Sigma(\pi, {\rm Spin} ,s)$ does have a pole at $s=1$, Beilinson conjectures predict that the pairing $\langle r_\mathcal{D}({\rm Eis}_{\mathcal{M}}(\Phi_f)), \omega_{\Psi} \rangle$ is zero and that there exists an algebraic cycle computing the residue of $L^\Sigma(\pi, {\rm Spin} ,s)$ at $s=1$. This has been investigated in \cite{CLRG2}.

\item The hypothesis in Theorem \ref{theo1intro} on the existence of a Fourier coefficient of type $(4\, 2)$ for $\pi$ does not always hold. However, if none of the irreducible cuspidal automorphic representations of ${\rm Sp}_6$ inside $\pi$ has cuspidal theta lift to the split ${\rm SO}_{12}$, $\pi$ supports such a Fourier coefficient (cf.  Lemma \ref{Fouriercoeff}).

\item The classes ${\rm Eis}_{\mathcal{M}}$ were used in \cite{GSp6paper1}, where the second and fourth authors showed that their \'etale realizations could be assembled into a norm-compatible tower of cohomology classes for some $p$-level subgroups, giving rise to an element of the Iwasawa cohomology of the local $p$-adic Galois representation associated with the automorphic representation of $\GSp_6$. The above theorem shows that the $p$-adic $L$-functions constructed in \cite{GSp6paper1} are indeed related to special values of complex $L$-functions in the spirit of Perrin-Riou conjectures.
\item As a corollary, Theorem \ref{theo1intro} gives a criterion for the nonvanishing of the motivic cohomology group in question, cf. Corollary \ref{nonvanish}.
\end{enumerate}
\end{remark}

Let us now explain the strategy for proving Theorem \ref{IntroTheoA}. One would like to calculate explicitly the pairing on the left hand side of Theorem \ref{IntroTheoA} provided by Poincar\'e duality between the class $r_\mathcal{D}(\mathrm{Eis}_\mathcal{M}(\Phi_f))$ and the class in compactly supported Betti cohomology defined by the cuspidal form $\omega_\Psi$. By the work of Jannsen, one can represent a class in Deligne cohomology as a pair of logarithmic currents on the toroidal compactification of the Shimura variety, i.e. linear forms of differential forms which extend to the boundary. Moreover, Poincar\'e duality is given by the natural evaluation of a current against a differential form on the open variety that can be extended to the whole compactification. In particular, one can evaluate these currents against differential forms which are compactly supported in the open variety or, by a well known result \cite[\S 4]{amrt}, top degree differential forms attached to holomorphic cuspidal automorphic forms. When the differential form $\omega_\Psi$ is not holomorphic, it is not clear whether we can extend it or not to the boundary, and to the best of our knowledge this might not be case. As in \cite{Kings} and \cite{lemmarf}, one could try to approximate the form $\omega_\Psi$ by closed differential forms with compact support at which we can evaluate these currents. However the main problem of this approach is that there is not enough control on the Hodge types of the compactly supported differential forms that approximate $\omega_\Psi$. Hence, for some technical reasons, it is not possible to show that the evaluation of the currents representing the class in Deligne cohomology at the compactly supported differential forms converges to the expected value $\langle r_\mathcal{D}(\mathrm{Eis}_\mathcal{M}(\Phi_f)), \omega_\Psi \rangle$. The explicit description of Deligne cohomology given by Corollary \ref{IntroCoroAHC} has the pleasant property of being covariant with respect to closed embeddings, which allows us to explicitly describe the image under the Gysin morphism $\iota_*$ of modular units as tempered currents, which can naturally be paired against the rapidly decreasing differential form $\omega_\Psi$. This, together with the second Kronecker's limit formula, allow us to conclude the proof of Theorem \ref{IntroTheoA}.

\subsection{Organization of the article} Section $2$ is the technical heart of the article and gives an explicit description of the Deligne--Beilinson cohomology of a smooth variety in terms of tempered currents. We also define natural pairings between elements in Deligne--Beilinson cohomology and certain closed rapidly decreasing differential forms. In Section $3$ we introduce the relevant reductive groups and the attached Shimura varieties and we explain the construction of the motivic cohomology classes. In Section $4$ we recall some classical results on the cohomology of the Siegel sixfold and we describe the construction of the harmonic differential form associated with a cusp form on $\GSp_6(\A)$.  Finally, in Section $5$, we prove Theorem \ref{IntroTheoA}, expressing the value of this linear form on the archimedean realization of our motivic classes in terms of Rankin-Selberg type integrals and, using the main result of \cite{PollackShah}, we relate these integrals to noncritical values of the Spin $L$-function of the fixed cuspidal automorphic representation of $\GSp_6(\A)$.

\subsection{Acknowledgements} It is a pleasure to thank the following people: Aaron Pollack and Shrenik Shah for pointing out a mistake in our arguments in a first version of this article as well as for various instructive discussions; Colette Moeglin for many useful conversations concerning discrete series and Fourier coefficients. Guido Kings and David Loeffler for their useful remarks on some of our calculations. The third named author would like to thank Pierre Colmez for his interest and suggestions on this article and for his constant support. Finally, we thank Michael Harris, R\'egis De la Bret\`eche, Taku Ishii, Tadashi Ochiai, Juan Esteban Rodr\'iguez Camargo, Giovanni Rosso, Benoit Stroh, Jun Su and Sarah Zerbes for useful correspondence or conversations related to this article. Finally, we would like to thank the anonymous referee for the valuable corrections and comments that helped us to considerably improve this article.

\section{Deligne--Beilinson cohomology and tempered currents} \label{sectiondelignecohomology}

The purpose of this chapter is to give a new explicit description of Deligne--Beilinson cohomology that will be suitable for the computations of the next chapter. In order to do this, we will explain how the Hodge filtration and the real structure on the cohomology groups of the Shimura variety are induced from Hodge complexes of differential forms with growth conditions and tempered currents. Since the results in this section might be of independent interest, we will work in a general setting that encompasses the case of all Shimura varieties.

Let $X$ denote a complex analytic variety which is smooth, quasi-projective and of pure dimension $d$. Let $\overline{X}$ be a smooth compactification of $X$ such that $D = \overline{X}-X$ is a simple normal crossing divisor. We denote by $j: X \rightarrow \overline{X}$ the open embedding. We will assume that $X$ is defined as the analytification of the base change to $\C$ of a smooth, quasi-projective $\R$-scheme. The complex conjugation $F_\infty$ is an antiholomorphic involution on $X$. For $p \in \Z$, let $\R(p)$ denote the subgroup $(2\pi i)^p \R$ of $\C$. We will denote by the same symbol the constant sheaf with value $\R(p)$ on $X$.

We recall the definition of Deligne--Beilinson cohomology ($DB$-cohomology for short) of $X$. Let $\Omega^*_X$ be the sheaf of holomorphic differential forms on $X$ and let $\Omega^*_{\overline{X}}(\log D)$ be the sheaf of holomorphic differential forms on ${X}$ with logarithmic poles along $D$ (see \cite[\S 3.1]{DeligneII}). The Hodge filtration on $\Omega^*_{\overline{X}}(\log D)$ is defined as $F^p\Omega^*_{\overline{X}}(\log D)=\bigoplus_{p' \geq p} \Omega^{p'}_{\overline{X}}(\log D)$. There are natural quasi-isomorphisms of complexes $Rj_*\C \rightarrow Rj_*\Omega^*_X$ and $\Omega^*_{\overline{X}}(\log D) \rightarrow Rj_* \Omega^*_X$ (see \cite{DeligneII} or \cite{Jannsen} for the basic facts used here). For any $p \in \Z$, the $DB$-cohomology groups $H^n_\mathcal{D}(X, \R(p))$ of $X$ with coefficients in $\R(p)$ are defined as the hypercohomology groups of the complex
\begin{equation} \label{defDBcohom}
\R(p)_{\mathcal{D}} := \text{cone}(Rj_*\R(p) \oplus F^p\Omega^*_{\overline{X}}(\log D) \rightarrow Rj_* \Omega^*_X)[-1],
\end{equation}
where the arrow is given by the difference of the natural maps. Let $\overline{F_\infty^*} = F_\infty^* \otimes c$ be the de Rham involution given by the action of the complex conjugation on $X$ and on the coefficients. The real $DB$-cohomology groups are defined as
\begin{equation} \label{DefRDB} H^n_\mathcal{D}(X / \R, \R(p)) = H^n_\mathcal{D}(X, \R(p))^{\overline{F^*_\infty} = 1}.
\end{equation}
The main idea is that one can replace the complexes appearing in Equation \eqref{defDBcohom} by any complexes that are quasi-isomorphic to them, respecting the different structures involved, namely the Hodge filtration and the real structure. We will start in \S \ref{SectionFunctionSpaces} by defining local spaces of slowly increasing and rapidly decreasing differential forms and in \S \ref{SubSectionBMop} we introduce the Bochner-Martinelly homotopy operator for compactly supported rapidly decreasing differential forms (Theorem \ref{LemmaBM}). In the final parts of this section, we apply these results to the global situation. We define complexes of fine sheaves of rapidly decreasing and slowly increasing differential forms, and of tempered currents. We show (Proposition \ref{PropOnCSC}) that the complex of rapidly decreasing differential forms calculates the compactly supported cohomology of $X$, that the complex of sheaves of tempered currents calculates the cohomology of $X$ (Theorem \ref{Propsicurrents}) and we finish with the description of Deligne cohomology of $X$ in terms of tempered currents (Theorem \ref{TheoremAHCcurrents}).

\subsection{Local functional spaces} \label{SectionFunctionSpaces}
If $0 < r < 1$, we denote by $\Delta_r^* = \{ z \in \C \; : \; 0 < |z| < r \}$ the open punctured disc of of radius $r$ and $\Delta_r = \{ z \in \C \; : \; |z| < r \}$ the open disc of radius $r$. We will write $\Delta := \Delta_{1/2}$. If $m, n \in \N$, we will denote by $z = (z_1, \hdots, z_m, z_{m+1}, \hdots, z_{m+n})$ the coordinates of $(\Delta^*_r)^m \times (\Delta_r)^n$. Let $d=n+m$ be the dimension of this last space. We start by fixing some useful notations.

\begin{notation} \label{notationmulti} Once coordinates as before have been chosen we will use the following notation.
\[
  \xi_{i} =
  \begin{cases}\displaystyle
  \frac{dz_{i}}{z_{i}}&\text{ for }i\le m,\\
  dz_i&\text{ for }i> m, 
  \end{cases} \label{xii}
\]
Note that, formally we have $\xi_i = d \log z_i$ for $i \leq m$. We will denote by $\overline {\xi}_i$ the complex conjugate form. Given a multi-index $\alpha=(\alpha_1,\dots,\alpha_{d}) \in \N^{d}$ we will denote
  \begin{displaymath}
    |\alpha|=\sum_{i=1}^{d}\alpha_i, \quad z^{\alpha}=\prod_{i=1}^{d}z_i^{\alpha_i}.
  \end{displaymath}
  and similar notation for $\overline {z}^{\alpha}$. 
  If $\beta \in \N^d$ is another multi-index, we will write
  \begin{displaymath}
  \partial^{\alpha,\beta} = \prod_{j = 1}^d \left(\frac{\partial}{\partial z_j}\right)^{\alpha_j} \left(\frac{\partial}{\partial \overline{z}_j}\right)^{\beta_j}. 
  \end{displaymath} 
  The symbol $\pmb{0}$ will denote the multi-index $(0,\dots,0)$, while the symbol $\pmb{1}$ will denote the multi-index with all entries equal to $1$. Given two multi-indices the addition is given componentwise. As an example of how this notation works, we have
  \begin{displaymath}
  z^{\alpha + \pmb{1}}=  \prod_{i=1}^{d}z_i^{\alpha_i+1}.
  \end{displaymath}
 By a slight abuse, we might sometimes note $|z|^\alpha = |z^\alpha| = \prod_{i =1}^d |z_i|^{\alpha_i}$.
  We will also use the following shorthands  $dz=dz_1\wedge\dots\wedge dz_d$ and $d\overline {z}=d\overline{z}_1\wedge\dots\wedge d\overline{z}_d$ and
  \[
 \big|\log|z|\big|^{N}=\prod_{i=1}^{d}\big|\log|z_i|\big|^N.
 \]
 We next add subsets to the notation. 
  The symbol $[a,b]$ will denote the set of integers between $a$ and $b$. 
  For any subset $I\subset [1,d]$ and any $d$-tuple $z=(z_1,\dots,z_d)$, we will denote by $z_I$ the object of dimension $|I|$ that agrees with $z$ for all entries in $I$. Similarly for a multi-index $\alpha$ the symbol $\alpha_{I}$ will denote the multi-index that agrees with $\alpha$ for all entries in $I$ and is zero otherwise. 
 
  We will adapt all the previous notations to take care of subsets. For instance, if $\alpha=(\alpha_1,\dots,\alpha_d)$, then 
  \[
  z_I^{\alpha}=z^{\alpha_I}=\prod_{i\in I}z_i^{\alpha_i}.
  \]
 Or, if $I=\{i_1<\dots<i_k\}$, we write 
 \begin{displaymath}
   dz_{I}=dz_{i_1}\wedge\dots \wedge dz_{i_k},\qquad 
      d\overline{z}_{I}=d\overline{z}_{i_1}\wedge\dots \wedge d\overline{z}_{i_k}.
 \end{displaymath} 
 Similarly we will write 
 \begin{displaymath}
    \xi_{I}=\xi_{i_1}\wedge\dots \wedge \xi_{i_k},\qquad
    \overline{\xi}_{I}=\overline{\xi}_{i_1}\wedge\dots \wedge \overline{\xi}_{i_k}.
 \end{displaymath}
 Also, if $N$ is an integer and $I\subset [1,d]$ is a subset, we will write 
 \[
 \big|\log|z_I|\big|^{N}=\prod_{i\in I}\big|\log|z_i|\big|^N.
 \]
 Finally, we will use the shorthand
  \begin{displaymath}
    z^{I}=z_{I}^{\pmb{1}}=\prod_{i\in I}z_i.
  \end{displaymath}
\end{notation}

\begin{definition} \label{Deflog} Let $f$ be a $\mathcal{C}^\infty$-function on $(\Delta^*)^m \times (\Delta)^n$. 
\begin{enumerate}
    \item The function $f$ is said to be \emph{slowly increasing}  if there exists an $N\in \N$  such that for all $\alpha, \beta \in \N^d$, there exists a constant $C$ such that
\[ |(\partial^{\alpha,\beta}f)(z)| \leq C \frac{| \log |z_{[1,m]}||^{N}}{\big|z_{[1,m]}^{\alpha+\beta}\big|}. \]
\item The function $f$ is said to be \emph{rapidly decreasing} if for all $N\in \N$ and all $\alpha, \beta \in \N^d$,  there exists a constant $C$ such that
\[ |(\partial^{\alpha,\beta}f)(z)| \leq C \frac{| \log |z_{[1,m]}||^{-N}}{\big|z_{[1,m]}^{\alpha+\beta}\big|}. \]
\end{enumerate}
\end{definition}

\begin{definition}
We denote by $A^0_{rd}$ and $A^0_{si}$ the spaces of rapidly decreasing and slowly increasing functions on $(\Delta^*)^m \times \Delta^n$, respectively.  We define the space of rapidly decreasing (resp. slowly increasing) differential forms $A^*_{rd}$ (resp. $A^*_{si}$) to be the $A^0_{rd}$ (reps. $A^0_{si}$)-subalgebra of the algebra of smooth differential forms on $(\Delta^*)^m \times \Delta^n$ freely generated by the differential forms $\xi_i$ and $\overline{\xi}_i$ for $1 \leq i \leq d$. A differential form, either rapidly decreasing or slowly increasing, is said to have relatively compact support if its support is relatively compact in $\Delta^d$, or equivalently if it is contained $\Delta_r^d \subseteq \Delta^d$ for some $r < 1/2$. The subspace of compactly supported rapidly decreasing functions will be denoted by $A^0_{rd,c}$.
We analogously define the spaces $A_{rd}^{p,q}$, $A^{p,q}_{si}$ and $A^{p,q}_{rd, c}$ for any $0 \leq p,q \leq d$.
\end{definition}

Explicitly, any $\eta(z) \in A^{p, q}_{rd}$ (resp. $A^{p, q}_{rd, c}$, resp. $A^{p, q}_{si}$) can be written as
\[ \eta = \sum_{|I| = p, |J|=q} f_{I, J}(z) \xi_I \wedge \overline{\xi}_J, \]
with $f_{I, J}(z) \in A^0_{rd}$ (resp. $A^0_{rd, c}$, resp. $A^0_{si}$) and $I, J \subseteq [1,d]$.

\begin{remark}
The growth condition on the derivatives in Definition \ref{Deflog} comes from asking that all the exterior higher derivatives of $f$ will satisfy the same growth conditions. For example, if $m = 1, n = 0$, i.e. working on  $\Delta^*$, and if $f(z)$ is a rapidly decreasing function, one wants $\overline{\partial} f (z) = \frac{\partial f}{\partial \overline{z}} d \overline{z} = \overline{z} \frac{\partial f}{\partial \overline{z}} \frac{d \overline{z}}{\overline{z}}$ of rapid decay, which translates precisely into asking that the function $\overline{z} \frac{\partial f}{\partial \overline{z}}$ is of rapid decay. This will assure in particular that the complexes of differential forms defined below are well defined. We also point out that a slowly increasing function is equivalent to a log smooth function on $\Delta^{d}$ equipped with the log structure given by the normal crossing divisor $\{ z_1 \hdots z_m = 0\}$, cf. \cite[Lemma 3.4(2)]{KatoMatsubaraNakayama}. Finally, we observe that this is the usual condition for defining rapidly decreasing functions taking into account the logarithmic change of variables on polar coordinates.
\end{remark}

Let us recall the definition of the Schwartz topologies on the space of rapidly decreasing functions.

\begin{definition} \label{DefTopologyrd}
We equip the space $A^0_{rd}$, resp. $A_{rd, c}^0$, of rapidly decreasing, resp. rapidly decreasing and compactly supported, functions on $(\Delta^*)^m \times \Delta^n$ with the usual Schwartz topology. More precisely $A^0_{rd}$ has the Fr\'echet topology given by the family of seminorms
\[ p_{N, \alpha,\beta}(f) := \sup_{\alpha',\beta',z} \big|z_{[1,m]}^{\alpha'+\beta'}\big| \big| \log|z_{[1,m]}| \big|^N |\partial^{\alpha',\beta'} f(z)|,\qquad (N \in \N,\quad \alpha,\beta \in \N^d),\] 
where $\alpha'$ and $\beta'$ run over all multi-indices satisfying $\alpha'\le \alpha$ and $\beta'\le \beta$. 
For each compact subset $K\subset \Delta^d$, we denote  by $A^{0}_{rd, K}$ the space of rapidly decreasing functions supported on $K$, equipped with the Fr\'echet topology given by the same family of seminorms.
Then 
\begin{displaymath}
A_{rd, c}^0 =  \varinjlim_{K} A^0_{rd, K},
\end{displaymath}
equipped with the direct limit topology of locally
convex vector spaces.
 We note $p_N$ the norm $p_{N, \pmb{0},\pmb{0}}$. The space $A^{p, q}_{si}$ (resp. $A^{p, q}_{rd, c}$, resp. $A^{p, q}_{si}$) is a free module over $A^0_{rd}$ (resp. $A^0_{rd, c}$, resp. $A^0_{si}$) and it is equipped
with the induced topology.
\end{definition}

\subsection{The Bochner-Martinelli operator on rapidly decreasing differential forms} \label{SubSectionBMop}

The Bochner-Martinelli operator (cf. \cite{GriffithsHarris} or \cite{Laurent}) is classically used to show higher analogues of the Cauchy-Riemann equations in complex analysis and to show the $\overline{\partial}$-Poincar\'e lemma for currents. We will introduce a variant of the Bochner-Martinelli operator on rapidly decreasing differential forms which will be used later to show the $\overline{\partial}$-Poincar\'e lemma for tempered currents. These calculations are inspired by the work of Harris and Phong \cite{HarrisPhong}, and the ideas go back to Borel \cite{borel1}. Throughout this section we will work on $(\Delta^*)^m \times \Delta^n$, with $\Delta = \Delta_{1/2}$, $m,n \in \N$ and we let $d = m+n$ as before.

Let $\eta$ be a smooth differential form on $(\Delta^*)^m \times \Delta^n$ of type $(0, q)$ whose support is relatively compact in $\Delta^d$. The Bochner-Martinelli operator is defined as
\begin{equation} \label{EquationDefBMFormula}
(K \eta)(z) = \int_{\Delta^d} k(z, w) \wedge \eta(w),
\end{equation} 
where, if one sets
\[ \Phi_i(\zeta) = (-1)^{i-1} \zeta_i d \zeta_1 \wedge \hdots \wedge \widehat{d \zeta_i} \wedge \hdots \wedge d \zeta_d, \]
\[ \Phi(\zeta) = d \zeta_1 \wedge \hdots \wedge d \zeta_d, \]
the kernel $k(z, w)$ of the operator is given by
\[ k(z, w) = \frac{(d - 1)!}{(2 \pi i)^d} \frac{ \sum_{i = 1}^d \overline{\Phi_i(z - w)} \wedge \Phi(w)}{\| z- w\|^{2d}}, \]
for $\| z-w \| = \left( \sum_{i=1}^d |z_i - w_i|^2 \right)^{\frac{1}{2}}$ denoting the usual Euclidean norm. The Bochner-Martinelli kernel is a locally integrable differential form on $\C^d \times \C^d \backslash \{ z = w\}$ that decomposes as
\[ k(z, w) = \sum_{q' = 1}^d k^{{q'}}(z, w), \]
where $k^{q'}(z, w)$ is of type $(0, q'-1)$ on the variable $z$ and of type $(d , d - q')$ on the variable $w$. In particular, when $\eta$ has type $(0,q)$ and the integral defining $K\eta$ is convergent, then it defines a form of type $(0,q-1)$.

Let
$$\eta(z) = \sum_{|I|=p}\eta_I(z) \wedge \xi_{I} \in A^{p,q}_{rd, c},$$ where the sum is taken over the subsets $I \subseteq [1,d]$ of cardinality $p$, be a compactly supported rapidly decreasing form of type $(p, q)$ on $(\Delta^*)^m \times \Delta^n $. Then we can write
$$
\eta_I(z) = \sum_{|J|=q} f_{I,J}(z) \overline{\xi}_J
$$
and, for every subsets $I,J \subseteq [1,d]$ and every $N \in \N$, we have
\begin{equation} \label{eqestimaterd}
|f_{I,J}(z)| \leq C \big| \log |z_{[1,m]}|\big|^{-N}
\end{equation}
for some constant $C > 0$ and the corresponding estimates hold (with other constants) for all derivatives of $f_{I,J}$ as in Definition \ref{Deflog}.

\begin{definition} \label{DefinitionBM}
Let $\eta(z) = \sum_{|I|=p}\eta_I(z) \wedge \xi_I \in A^{p,q}_{rd, c}$ be a rapidly decreasing differential form on $(\Delta^*)^m \times \Delta^n$ with relatively compact support. We define
\[ (K' \eta)(z) := \sum_{|I|=p}z^{[1,m]}  K\left(\frac{\eta_I(w) }{w^{[1,m]}}\right)(z)\wedge \xi_I. \]
\end{definition}

Our first purpose is to show that the operator $K'$ is well defined and that it gives a rapidly decreasing differential form. We start by expanding the definition in order to have a more explicit expression for it. We have
\begin{eqnarray*}
(K' \eta_I)(z) &=& \sum_{|J|=q}z^{[1,m]} \int_{\Delta^d} k(z, w) \wedge \frac{f_{I,J}(w)}{w^{[1,m]} \overline{w}^{J\cap[1,m]} } d\overline{w}_J  \\
&=& \frac{(d - 1)!}{(2 \pi i)^d}  \sum_{|J|=q} \sum_{i=1}^d z^{[1,m]}\int_{\Delta^d} \frac{\overline{\Phi_i(z - w)} \wedge \Phi(w)}{\|z - w\|^{2d}} \wedge \frac{f_{I,J}(w)}{w^{[1,m]} \overline{w}^{J \cap [1,m]} }  d\overline{w}_J.
\end{eqnarray*}
Observe that for $i \notin J$ we formally have 
\[
\int_{\Delta^d} \frac{\overline{\Phi_i(z - w)} \wedge \Phi(w)}{\|z - w\|^{2d}} \wedge \frac{f_{I,J}(w)}{w^{[1,m]} \overline{w}^{J \cap [1,m]} }  d\overline{w}_J = 0,
\] 
and for $i \in J$ we get
\begin{multline*}
    \int_{\Delta^d} \frac{\overline{\Phi_i(z - w)} \wedge \Phi(w)}{\|z - w\|^{2d}} \wedge \frac{f_{I,J}(w)}{w^{[1,m]} \overline{w}^{J \cap [1,m]} }  d\overline{w}_J =\\ \pm \left( \int_{\Delta^d} \frac{(\overline{z}_i - \overline{w}_i) f_{I,J}(w)}{\|z - w \|^{2d} w^{[1,m]} \overline{w}^{J \cap [1,m]} } dw \wedge d \overline{w} \right) \wedge d \overline{z}_{J-\{i\}}.  
\end{multline*}
where the $\pm$ sign comes from exchanging the order of the differentials. Putting this together, we obtain that, up to a sign, one can write
\begin{equation} \label{expressionK'}
(K' \eta_I)(z) = \frac{(d - 1)!}{(2 \pi i)^d} \sum_{i=1}^d\sum_{|J|=q} g_{i,I,J}(z) \wedge d \overline{z}_{J-\{i\}},
\end{equation}
where we denote
\begin{equation} \label{EquationDefgIJ}
g_{i,I,J}(z) :=  z^{[1,m]} \int_{\Delta^d} \frac{(\overline{z}_i - \overline{w}_i) f_{I,J}(w)}{\|z - w \|^{2d} w^{[1,m]} \overline{w}^{J \cap [1,m]} } dw \wedge d \overline{w}.
\end{equation}
In the following Lemmas we shall first prove the absolute convergence of the integral defining $g_{i, I,J}(z)$ and second that $g_{i, I,J}(z)$ is rapidly decreasing. Recall that when $i\not \in J$, the function $g_{i,I,J}$ is
zero, so there is nothing to prove. Hence we can assume that $i\in J$.

\begin{lemma} \label{LemmaBMconvergence} Let $f 
\in A^0_{rd, c}$ be any rapidly decreasing function on $(\Delta^*)^m \times \Delta^n$ with relatively compact support and let $i \in J \subseteq [1,d]$. Then, for any $z \in (\Delta^*)^m \times \Delta^n$, the integral
\[ \int_{\Delta^d} \frac{(\overline{z}_i - \overline{w}_i) f(w)}{\|z - w \|^{2d} w^{[1,m]} \overline{w}^{J\cap[1,m]} } dw \wedge d \overline{w}  \]
is absolutely convergent.
\end{lemma}

\begin{proof} 
Let us fix $z \in (\Delta^*)^m\times \Delta^n$. Write $f = f_1  + f_2$, where $f_1$ is a smooth function which is supported in a sufficiently small polydisc $B(z, 2a) := \prod_{j = 1}^d B(z_j, 2a)$ around $z$ and $f_2$ vanishes identically on $\prod_{j = 1}^d B(z_j, a)$, with $a > 0$ small enough so that, for $w\in B(z,3a)$, $w^{[1,m]}\not = 0$. Since $|z_i - w_i| \leq \| z - w \|$, we have
\begin{eqnarray*}
\int_{\Delta^d} \bigg| \frac{(\overline{z}_i - \overline{w}_i) f_1(w)}{\|z - w \|^{2d} w^{[1,m]} \overline{w}^{J\cap [1,m]}} dw \wedge d \overline{w} \bigg| &\leq & C \int_{B(z, 2a)} \frac{1}{\| z - w \|^{2d -1}} |dw \wedge d\overline{w}| < +\infty,
\end{eqnarray*}
for some $C > 0$ and where the last integral converges from the standard fact that $|dw \wedge d\overline{w}|$ is twice the Lebesgue measure and that $\int_{B^n(0, 1)} \frac{dx}{\| x\|^\alpha}$ converges if $\alpha < n$ ($B^n(0, 1) \subseteq \R^n$ is the unit ball). On the other hand, fixing an integer $N \geq 2$, we have
\begin{eqnarray*}
 \int_{\Delta^d}  \bigg|\frac{ (\overline{z}_i - \overline{w}_i) f_2(w)}{\|z - w \|^{2d} w^{[1,m]} \overline{w}^{J\cap[1,m]}} dw \wedge d \overline{w} \bigg| &\leq& C' \int_{\Delta^d}\frac{|\log|w_{[1,m]}||^{-N}}{|w^{[1,m]}|^2} | dw \wedge d\overline{w}| < +\infty,
\end{eqnarray*}
for some constant $C' > 0$. Here we used that the integral $\int_{\Delta^d}\frac{|\log|w_{[1,m]}||^{-N}}{|w^{[1,d]}|^2} | dw \wedge d\overline{w}|$ converges for $N \geq 2$ as it can be written as the product of the integrals
\[ \int_{\Delta}\frac{|\log|w_j||^{-N}}{|w_j|^2} | dw_j \wedge d\overline{w}_j| = 4 \pi \int_{0}^{1 / 2} \frac{|\log(\rho)|^{-N}}{\rho} d\rho = 4 \pi \frac{\log(2)^{-N + 1}}{N - 1} < +\infty. \]
\end{proof}

The following elementary lemmas will be useful in the proof of Lemma \ref{LemmaBMestimation}. We warn the reader that various auxiliary constants will appear and they will sometimes be denoted by the same letter $C$, which should cause no confusion. As one can easily check, they will mostly depend only on $d$ or on $d$ and $N$ (but not on the variables $z$ nor $w$) and they can be made explicit, but this is not relevant for our purposes.

\begin{lemma}\label{lemm:3} Let $z\in \Delta $ and $D_{z}=\{w\in
  \Delta \mid |z-w|\le |z|/3\}$. 
   The following estimations hold.
  \begin{enumerate}
  \item\label{item:1} If $w\in D_{z}$, then
    \begin{equation}\label{eq:1}
      |z|\ge 3|z-w|,\quad 
      |w|\ge 2|z-w|, \quad 
      2|z|\le 3|w|\le 4|z|.
    \end{equation}
    Moreover, if $N\ge 0$, there is a constant $C>0$ independent of $z$ and $w$
    such that
    \begin{equation}
      \label{eq:4}
      \big|\log|w|\big|^{-N}\le C \big|\log|z|\big|^{-N}.
    \end{equation}
  \item\label{item:2} If $w\in \Delta \smallsetminus D_{z}$, then
    \begin{equation}
      \label{eq:5}
      |z|\le 3|z-w| \le 9|z|,\quad
      |w|\le 4|z-w|.
    \end{equation}
    Moreover, if $N\ge 0$ there is a constant $C>0$ independent of $z$
    and $w$ such that
    \begin{equation}
      \label{eq:7}
      \frac{\big|\log|w|\big|^{-N}}{|z-w|}\le C\frac{\big|\log|z|\big|^{-N}}{|z|}.
    \end{equation}
  \end{enumerate}
\end{lemma}
\begin{proof}
  We give only the proof of the estimate \eqref{eq:7} as all the other
  estimates are a consequence of the triangle inequality. We separate two
  cases. If $|w|\le |z|^{3/2}$, then
  \begin{displaymath}
    \frac{\big|\log|w|\big|^{-N}}{|z-w|}\le \frac{(2/3)^{N}\big|\log|z|\big|^{-N}}{|z|/3}.
  \end{displaymath}
  While, if $|w|\ge |z|^{3/2}$, then 
  \begin{displaymath}
    \frac{\big|\log|w|\big|^{-N}}{|z-w|}\le \frac{C}{|w|}\le
    \frac{C}{|z||z|^{1/2}}\le \frac{C'\big|\log|z|\big|^{-N}}{|z|}. 
  \end{displaymath}
\end{proof}

\begin{lemma} \label{lemm:2}
Let $d \in \N$. Then there exists some constant $C_d$ such that, for any $\ell < d$, any $r_{1},\dots,r_{\ell} \le 1/2$ and $a > 0$, 
\begin{equation} \label{eq:9}
\int_{\Delta _{r_{1}}\times \dots\times
    \Delta_{r_\ell}}
    \frac{|dz\wedge
      d\bar z |}{(a+\|z\|^{2})^{d}} \le C_d \frac{1}{a^{d-\ell}}.
\end{equation}
\end{lemma}

\begin{proof}
Straightforward by computing the integral in polar coordinates.
\end{proof}

The following lemma is the first step for showing that the form
$K'\eta$ is actually a rapidly decreasing differential form on $(\Delta^*)^m \times \Delta^n$. Recall that,  if $f \in A_{rd, c}^{0}$ is a rapidly decreasing function on $(\Delta^*)^m \times \Delta^n$ with relatively compact support, then, for any $N \in \N$, we have
\begin{equation} \label{EquationTmpEstimates}
  |f(z)| \leq C_{f,N} \big|\log|z_{[1,m]}|\big|^{-N}=C_{f,N} \prod_{i=1}^m \big|\log|z_i|\big|^{-N},
\end{equation}
for $z \in (\Delta^*)^m \times \Delta^n$, where we denoted $C_{f, N} := p_N(f)$.

\begin{lemma} \label{LemmaBMestimation} 
 Let $f 
\in A^0_{rd, c}$ be any rapidly decreasing function on $(\Delta^*)^m \times \Delta^n$ with relatively compact support and let $J \subseteq [1,d]$. For $i \in J$, let
  \[
  g_i(z)=z^{[1,m]}\int_{\Delta^d} \frac{(\overline{z}_i - \overline{w}_i) f(w)}{\|z - w \|^{2d} w^{[1,m]} \overline{w}^{J\cap[1,m]} } dw \wedge d \overline{w}
  \] 
be the function on $(\Delta^*)^m \times \Delta^n$, which is well defined by Lemma \ref{LemmaBMconvergence}. Then, for any $N \in \N$, there exists a universal constant $C_{d, N}$, depending only on $d$ and $N$, such that for any $z \in (\Delta^*)^m \times \Delta^n$ we have
\[ |g_i(z)| \leq C_{d, N} C_{f, N} \frac{ \big| \log |z_{[1,m]}| \big|^{-N+2}}{|z^{J \cap [1,m] \smallsetminus \{i\}}|}. \]
\end{lemma}

\begin{proof}
By using the estimates of Equation \eqref{EquationTmpEstimates} for $f$, we have
\begin{eqnarray*}
|g_i(z)| &\leq&  |z^{[1,m]}| \int_{\Delta^d} \frac{|\overline{z}_i - \overline{w}_i| |f(w)|}{\|z - w \|^{2d} |{w}^{[1,m]}| |\overline{w}^{J \cap [1,m]}|} |dw \wedge d \overline{w}|\\
&\leq& C_{f, N} |z^{[1,m]}| \int_{\Delta^d} \frac{|\overline{z}_i - \overline{w}_i\big| \log |w_{[1,m]}|\big|^{-N}}{\|z - w \|^{2d} |{w}^{[1,m]}| |\overline{w}^{J \cap [1,m]}|} |dw \wedge d \overline{w}|.
\end{eqnarray*}

We discuss first the case $d=1$. We assume that $m = 1$ and so $J= \{1\}$, since if not the result is trivial. We have
\[ g(z) = z \int_{\Delta} \frac{f(w)}{(z - w) w \overline{w} }
  dw \wedge d \overline{w}. \]
Let $r := |z|$ and decompose $\Delta^*$ into two regions
\[ \Delta^* = D_z \cup D_z ^c\]
where $D_z$ is as in Lemma \ref{lemm:3} and $D_z^c = \Delta \smallsetminus D_z$ denotes its complement. Using Lemma
\ref{lemm:3}(\ref{item:1}) and the change of variables $\zeta=w-z$, we obtain that 
there is a universal constant $C$ such that 
\begin{displaymath}
|z| \int_{D_z} \frac{\big| \log |w| \big|^{-N}}{|z - w| |w|^2}| dw \wedge
d\overline{w}| \leq  C  \frac{\big|\log |z| \big|^{-N}}{|z|} \int_{B(0,|z|/3)}
\frac{1}{|\zeta|} |d\zeta \wedge d\overline{\zeta}|
= \frac{4 \pi}{3} C {\big|\log |z|\big|^{-N}},
\end{displaymath}
where the last integral is calculated using polar coordinates.
On the other hand, using  Lemma \ref{lemm:3}(\ref{item:2}), we deduce
\begin{displaymath}
  |z| \int_{D_z^{c}} \frac{\big| \log |w| \big|^{-N}}{|z - w| |w|^2}| dw \wedge
  d\overline{w}|\le
  C |z|\frac{\big| \log |z| \big|^{-N+2}}{|z |}\int_{\Delta }\frac{| dw \wedge
  d\overline{w}|}{|w|^2 \big|\log|w|\big|^2}\le C' \big| \log |z| \big|^{-N+2},
\end{displaymath}
for some other constants $C$ and $C'$. Putting this together, we obtain the desired bound in dimension $1$.

%%%% OPTION 1

We next discuss the general case $d > 1$. For simplicity of notation, we will assume that $m=d$, since the function $f$ is smooth on the whole $\Delta_j$ with respect to the variables $w_j$ for $j \in [m+1, d]$ and all the corresponding calculations will be simpler for those coordinates. In particular, in the formulas above we have $|w^{[1,m]}| = |w^{[1,d]}|$, $\overline{w}^{J \cap [1,m]} = \overline{w}^J$ and $\big| \log |w_{[1,m]}|\big| = \big|\log|w|\big|$, etc. We decompose the domain of integration into several subsets.
For $K\subset [1,d]\smallsetminus \{i\}$, we write  $K^c=[1,d]\smallsetminus K$ and $K'=[1,d]\smallsetminus
(\{i\}\cup K)$, and put
\[ U_K = D_K \times D_K^c \times \Delta_i := \prod_{j \in K} D_{z_j} \times \prod_{j \in K'} D_{z_j}^c \times \Delta_i. \]
That is, 
\[
U_K=\{w\in (\Delta^{\ast})^d\mid |z_i-w_i|\le |z_i|/3\text{ for }i\in K,\ |z_i-w_i|> |z_i|/3\text{ for }i\in K'\},
\]
so that we have $(\Delta^*)^d = \bigsqcup_{K \subseteq [1, d] \smallsetminus \{i\}} U_K$.
Using Lemma \ref{lemm:3}(\ref{item:1}), we obtain
  \begin{multline*}
      \int_{U_{K}}\frac{|z_{i}-w_{i}|
      \,\big|\log|w|\big|^{-N}}{\|z-w\|^{2d}|w^{[1,d]}||w^{J}|}|dw\wedge d\overline
    w|\leq \\
    C     \frac{\big|\log|z_{K}|\big|^{-N}}{|z^{K}|\,|z^{K\cap J}|}
    \int_{D^{c}_{K'}\times \Delta _i}\frac{|z_{i}-w_{i}|\big|\log|w_{K^{c}}|\big|^{-N}}{|w^{K^{c}}||w^{K^{c} \cap J}|}\\
    \left(\int_{D_{K}}\frac{|dw_{K}\wedge
    d\overline w_{K}|}{(\|z_{K}-w_{K}\|^{2}+\|z_{K^c} - w_{K^c}\|^{2})^{d}}\right)    |dw_{K^{c}}\wedge
    d\overline w_{K^{c}}|, 
  \end{multline*}
  for some constant $C>0$. In the previous formula, recall that, according to notation \ref{notationmulti}, if $K=\{k_1,\dots,k_\ell\}$,
  \[
  z^K=\prod_{i\in K} z_i,\quad\text{while}\quad  z_K = (z_{k_1},\dots,z_{k_\ell}),
  \]
  Noticing that $|K|<d$ we can bound the
  inner integral using Equation \eqref{eq:9} of Lemma \ref{lemm:2}, so 
    \begin{multline*}
    \int_{U_{K}}\frac{|z_{i}-w_{i}|
      \,\big|\log|w_{[1,d]}|\big|^{-N}}{\|z-w\|^{2d}|w^{[1,d]}||w^{J}|}|dw\wedge d\overline
    w|\le\\
    C     \frac{\big|\log|z_{K}|\big|^{-N}}{|z^{K}|\,|z^{K\cap J}|}
    \int_{D^{c}_{K'}\times \Delta
      _i}\frac{|z_{i}-w_{i}|\big|\log|w_{K^{c}}|\big|^{-N}}
    {\|z_{K^c}-w_{K^c}\|^{2(d-|K|)}|w^{K^{c}}||w^{K^{c}
        \cap J}|}  |dw_{K^{c}}\wedge
    d\overline w_{K^{c}}|, 
    \end{multline*}
  for some new constant $C>0$. Using the AM-GM inequality, the remaining integral can be bounded, up to a
  constant, by a product of the following integrals
  \begin{align*}
    &\int_{D^{c}_{j}}\frac{\big|\log|w_j|\big|^{-N}}
    {|z_{j}-w_{j}|^{2}|w_{j}|^2}  |dw_{j}\wedge
    d\overline w_{j}|, \text{ for }j\in K'\cap J,\\ 
    &\int_{D^{c}_{j}}\frac{\big|\log|w_j|\big|^{-N}}
    {|z_{j}-w_{j}|^{2}|w_{j}|}  |dw_{j}\wedge
    d\overline w_{j}|, \text{ for }j\in K'\smallsetminus J,\\ 
    &\int_{\Delta
      _i}\frac{\big|\log|w_{i}|\big|^{-N}}
    {|z_{i}-w_{i}| |w_{i}|^2}  |dw_{i}\wedge
    d\overline w_{i}|.
  \end{align*}
  
By the case $d=1$, the third integral can be
bounded as
\begin{displaymath}
  \int_{\Delta
      _i}\frac{|z_{i}-w_{i}|\big|\log|w_{i}|\big|^{-N}}
    {|z_{i}-w_{i}|^{2}|w_{i}|^2}  |dw_{i}\wedge
    d\overline w_{i}|\le C \frac{\big|\log |z_i|\big|^{-N+2}}{|z_i|}.
\end{displaymath}
By Lemma \ref{lemm:3}(\ref{item:1}), we have
\[ \int_{D^{c}_{j}}\frac{\big|\log|w_j|\big|^{-N}}
    {|z_{j}-w_{j}|^{2}|w_{j}|}  |dw_{j}\wedge
    d\overline w_{j}| \leq 4 \int_{D^{c}_{j}}\frac{\big|\log|w_j|\big|^{-N}}
    {|z_{j}-w_{j}||w_{j}|^2}  |dw_{j}\wedge
    d\overline w_{j}| \leq 4C \frac{\big|\log |z_j|\big|^{-N+2}}{|z_j|}, \]
where the last inequality follows again from the case $d=1$, providing the bound for the integrals for $j \in K' \smallsetminus J$. Finally, by Lemma \ref{lemm:3}(\ref{item:2}), for $j \in K'\cap J$ we have
\begin{align*}
  \int_{D^{c}_{j}}
  \frac{\big|\log|w_{j}|\big|^{-N}}{|z_{j}-w_{j}|^{2} |w_{j}|^2}|dw_{j}\wedge
    d\overline w_{j}| 
%&\le \int_{D^{c}_{j}} \frac{|\log|w_{j}||^{-N+2}}{|z_{j}-w_{j}|^{2}}\frac{1}{|w_{j}\log|w_j||^2}|dw_{j}\wedge d\overline w_{j}| \\
&\le C_1\frac{\big|\log|z_{j}|\big|^{-N+2}}{|z_{j}|^{2}}
    \int_{D^{c}_{j}}
  \frac{1}{ |w_{j}|^2 \big|\log|w_j|\big|^2}|dw_{j}\wedge
                           d\overline w_{j}|\\
  &\le C_2\frac{\big|\log|z_{j}|\big|^{-N+2}}{|z_{j}|^{2}},
\end{align*}
for some constants $C_1$ and $C_2$. Putting all the bounds together we deduce
\begin{displaymath}
  |g_i(z)| \leq  C_{f,N}C_{d,N} |z^{[1,d]}| \frac{|\log
    |z_{[1,d]}||^{-N+2}}{|z^{[1,d]}||z^{J \smallsetminus \{i\}}|}=
  C_{f,N}C_{d,N} \frac{\big|\log |z_{[1,d]}|\big|^{-N+2}}{|z^{J \smallsetminus \{i\}}|}.
\end{displaymath}
This completes the proof.
\end{proof}

We are finally left to estimate the growth of the successive derivatives of $K'\eta$. Recall that, for any $N \in \N$ and any $\alpha, \beta \in \N^d$, setting $C_{f, N, \alpha, \beta} := p_{N, \alpha, \beta}(f)$, one has $|\partial^{\alpha', \beta'} f(z)| \leq C_{f, N, \alpha, \beta} \big|z_{[1,m]}^{-(\alpha' + \beta')} \big| \big| \log|z_{[1,m]}| \big|^{-N}$ for any $\alpha' \leq \alpha$ and $\beta' \leq \beta$. 

\begin{lemma} \label{LemmaDerivatives} Let
  $f \in A_{rd, c}^{0}$ be a
  rapidly decreasing function on $(\Delta^*)^m \times \Delta^n$ with relatively compact support. Let $J \subseteq [1, d]$, $i \in J$ and let $g_i(z)$ be as in Lemma \ref{LemmaBMestimation}. Then, for
  any $\alpha, \beta \in \N^d$ and $ N\ge 2$,
  there exists a constant $C_{d, N}^{\alpha, \beta}$, depending only on $d$, $N$, $\alpha$ and $\beta$, such that for any $z \in (\Delta^*)^m \times \Delta^n$ we have
\[ \big| z_{[1,m]}^{\alpha + \beta} \big| \big|(\partial^{\alpha, \beta} g_i)(z) \big| \leq C_{d, N}^{\alpha, \beta} C_{f, N, \alpha, \beta} \frac{\big|
      \log|z_{[1,m]}|\big|^{-N+2}}{|z^{J \cap [1,m] \smallsetminus \{i\}} | }. \]
\end{lemma}

\begin{proof}
As in the proof of Lemma \ref{LemmaBMestimation}, for simplicity in the notation, we assume $m = d$ (for the other variables the calculation is simpler, as $f$ is smooth). Recall that 
\[ g_i(z) =  z^{[1,d]} \int_{\Delta^d} \frac{(\overline{z}_i - \overline{w}_i) f(w)}{\|z - w \|^{2d} w^{[1,d]} \overline{w}^J } dw \wedge d \overline{w}. \]
Denote $\mu_i(z, w) = \frac{(\overline{z}_i - \overline{w}_i)}{\| z - w \|^{2d}}$. Since we will be deriving with respect to different variables, during this proof we note $\partial_{z, \overline{z}}^{\alpha, \beta} = \partial^{\alpha, \beta}$ so that $\partial_{w, \overline{w}}^{\alpha, \beta}$ will denote the same differential operator but with respect to the variable $w$. Applying Leibniz rule, we can write
\begin{eqnarray*}
(\partial_{z,\overline{z}}^{\alpha, \beta} g_i)(z) &=& \partial_{z,\overline{z}}^{\alpha, \beta} \left( z^{[1,d]} \int_{\Delta^d} \mu_i(z, w) \frac{f(w)}{w^{[1,d]} \overline{w}^J} dw \wedge d\overline{w} \right) \\
&=& \sum_{\alpha_1 + \alpha_2 = \alpha} \binom{\alpha} {\alpha_1} \partial_{z,\overline{z}}^{\alpha_1, 0}(z^{[1,d]}) \partial_{z,\overline{z}}^{\alpha_2, \beta} \left( \int_{\Delta^d} \mu_i(z, w) \frac{f(w)}{w^{[1,d]} \overline{w}^J} dw \wedge d\overline{w} \right).
\end{eqnarray*}
Observing that $\partial_{z,\overline{z}}^{\alpha_1, 0}(z^{[1,d]}) = 0$ when $\alpha_1$ has at least one coordinate that is $>1$, and that if every coordinate of $\alpha_1$ is $\leq 1$ then $\partial_{z,\overline{z}}^{\alpha_1, 0}(z^{[1,d]}) = z^{\pmb{1} - \alpha_1}$, it suffices to show that, for all $\alpha, \beta \in \N^d$,
\begin{equation} \label{EquationGrowthLemma1}
\big| z^{\alpha+\beta + \pmb{1}} \big| \left| \partial_{z,\overline{z}}^{\alpha, \beta} \left( \int_{\Delta^d} \mu_i(z, w) 
\frac{f(w)}{w^{[1, d]} \overline{w}^J} dw \wedge d\overline{w} \right) \right| 
\end{equation}
satisfies the bound of the statement, where, as in notation \ref{notationmulti} $\alpha + \beta + \pmb{1} = (\alpha_1 + \beta_1 + 1 , \ldots, \alpha_d + \beta_d + 1)$. We make a
decomposition of the domain of integration $\Delta ^d$ slightly
different from the one used in Lemma \ref{LemmaBMestimation}. 
For $K\subset [1,d]$, we write  $K^c=[1,d]\smallsetminus K$, and put
  \begin{displaymath}
    U'_{K}=\{w =(w_{1},\dots,w_{d})\mid |z_{j}-w_{j}|\le
    |z_{j}|/3 \text{ if }j\in K,\  |z_{j}-w_{j}|\ge
    |z_{j}|/3 \text{ if }j\in K^c\},
  \end{displaymath}
i.e., $U'_{K} = \prod_{j \in K} D_{z_j} \times \prod_{j \in K^c} D_{z_j}^c$.
Then the expression of
Equation \eqref{EquationGrowthLemma1} will be written as
\begin{equation} \label{EquationGrowthLemma2}
  |z^{\alpha +\beta + \pmb{1}}| \sum_{K \subseteq [1, d]} \partial_{z, \overline{z}}^{\alpha, \beta} \left( \int_{U'_K} \mu_i(z, w) \frac{f(w)}{w^{[1, d]} \overline{w}^J} dw \wedge d\overline{w} \right).
\end{equation}
We claim that, for each $K \subseteq [1, d]$, we have
\begin{multline}
  \partial_{z, \overline{z}}^{\alpha, \beta}
  \left( \int_{U'_K} \mu_i(z, w) \frac{f(w)}
    {w^{[1, d]} \overline{w}^J} dw \wedge d\overline{w} \right) \\
  = \int_{U'_K} \partial_{z, \overline{z}}^{\alpha_{K^c},
    \beta_{K^c}}
  \left( \mu_i(z, w) \right)
  \partial_{w, \overline{w}}^{\alpha_K, \beta_K}
  \left( \frac{f(w)}{w^{[1, d]} \overline{w}^J} \right)
  dw \wedge d\overline{w}. \label{eq:14}
\end{multline}
Recall that, following notation \ref{notationmulti}, $\alpha_K, \beta_K \in \N^d$ have $j$-th coordinates equal to
$\alpha_j$ for $j \in K$ and $0$ for $j \notin K$, and the opposite for
$\alpha_{K^c}, \beta_{K^c}$; Equation \eqref{eq:14} follows basically from
the composition
\begin{displaymath}
\partial_{z, \overline{z}}^{\alpha, \beta}=\partial_{z,
  \overline{z}}^{\alpha_K, \beta_K}\circ
\partial_{z, \overline{z}}^{\alpha_{K^c}, \beta_{K^c}}.
\end{displaymath}
Indeed, in $U'_K$, for $j\in K^c$, the function $\mu _i(z,w-z)$ is
smooth with respect to  $z_j$ and $\overline{z}_j$, so that
\begin{displaymath}
  \partial_{z, \overline{z}}^{\alpha_{K^c}, \beta_{K^c}}
  \left( \int_{U'_K} \mu_i(z, w) \frac{f(w)}
    {w^{[1, d]} \overline{w}^J} dw \wedge d\overline{w} \right)
  = \int_{U'_K} \partial_{z, \overline{z}}^{\alpha_{K^c},
    \beta_{K^c}}
  \left( \mu_i(z, w) \right)
  \frac{f(w)}{w^{[1, d]} \overline{w}^J}
  dw \wedge d\overline{w},
\end{displaymath}
while for $j\in K$, 
the function $\frac{f(w)}{w^{[1, d]} \overline{w}^J}$ and
its derivatives are smooth with respect to the variables $w_j$ and
$\overline{w}_j$. Therefore, we have
\begin{align*}
  \partial_{z, \overline{z}}^{\alpha_{K}, \beta_{K}} \Bigg( \int_{U'_K} \mu_i(z, w) \frac{f(w)}{w^{[1,d]}
  \overline{w}^J} & dw \wedge d\overline{w} \Bigg)\\
  &= \partial_{z, \overline{z}}^{\alpha_{K}, \beta_{K}} \left( \int  \mu_i(z, z-u) \frac{f(z-u)}
    {(z -u)^{[1,d]}  \overline{(z - u)}^J}
    du \wedge d\overline{u} \right)\\
  &= \int \mu_i(z, z-u)  \partial_{z, \overline{z}}^{\alpha_{K}, \beta_{K}} \left( \frac{f(z-u)}
    {(z - u)^{[1,d]}  \overline{(z - u)}^J} \right)
    du \wedge d\overline{u} \\
  &= \int_{U'_K} \mu_i(z, w)  \partial_{w, \overline{w}}^{\alpha_{K}, \beta_{K}} \left( \frac{f(w)}
    {w^{[1,d]}  \overline{w}^J} \right) dw \wedge d\overline{w}, 
\end{align*}
where in the first equality we have made the change of variables $w
\mapsto u:=z - w$. In the second one we used that $\mu_i(z, z-u) =
\frac{\overline{u}_i}{\|u\|^{2d}}$ is integrable and independent of
$z$ and $\overline{z}$ and $\frac{f(z-u)}{(z - u)^{[1,d]}
  \overline{(z - u)}^J}$ is smooth on $\{ z \mid z-u \in U_K \}$ with
respect to the variables in $K$. This proves the claim.

We calculate the derivative $\partial_{w,
  \overline{w}}^{\alpha_{K}, \beta_{K}} \left( \frac{f(w)}{w^{[1, d]} \overline{w}^J} \right)$. Using Leibniz rule,
one sees that this derivative is a linear combination of terms of the
form
\[
  \frac{\partial_{w, \overline{w}}^{\ell, k} f(w)}
  {w^{\alpha_{K}+\pmb{1} - \ell} \overline{w}
    ^{\beta_{K \cap J}+\pmb{1}_J - k_{K \cap J}}}
\]
for $\ell, k \in \N^d$ with $\ell \leq \alpha_{K}$ and
$k \leq \beta_{K}$ and with
$k_{K \smallsetminus J} = \beta_{K\smallsetminus J}$. By the definition of the norm $p_{N,\alpha,\beta}$, we have the bound
\begin{equation} \label{EquationEstimateTmp}
  \left| \frac{\partial_{w, \overline{w}}^{\ell, k} f(w)}
    {w^{\alpha_{K} + \pmb{1} - \ell}
      \overline{w}^{\beta_{K\cap J} + \pmb{1}_J - k_{K \cap J}}} \right|
  \leq C_{f, N, \alpha, \beta} \frac{\big| \log|w_{[1,d]}|\big|^{-N}}{|w|^{\alpha_{K} + \pmb{1}}
    |\overline{w}|^{\beta_{K \cap J} + \pmb{1}_J} |w|^{k_{K
        \smallsetminus J}}}.
\end{equation}
We recall that $w^{\alpha_{K} + \pmb{1}}$ denotes $\prod_{j} w_j^{\alpha_{K}(j) + 1}$.
So, observing that $\beta_{K \cap J} + k_{K \smallsetminus J} \leq
\beta_{K}$, we obtain the bound
\begin{multline*}
 |z^{\alpha + \beta + \pmb{1}}|  \left| \partial_{z,
    \overline{z}}^{\alpha, \beta} \left( \int_{U'_K} \mu_i(z, w)
    \frac{f(w)}{w^{[1, d]} \overline{w}^J} dw \wedge
    d\overline{w} \right) \right|\\
  \le C_{f, N, \alpha, \beta} 
  |z^{\pmb{1} + \alpha_{K^c} + \beta_{K^c}}| \int_{U'_K} \left| \partial_{z, \overline{z}}^{\alpha_{K^c}, \beta_{K^c}}\left( \mu_i(z, w) \right) \right| \left( \frac{\big|\log|w_{[1,d]}|\big|^{-N}}{|w^{[1,d]}| |\overline{w}^J|} \right)  \left| dw \wedge d\overline{w} \right|.
\end{multline*}
We are left to calculate the factors $\partial_{z,
  \overline{z}}^{\alpha_{K^c}, \beta_{K^c}}\left( \mu_i(z, w) \right)$. Using
Leibniz rule again, one sees that $\partial_{z,
  \overline{z}}^{\alpha_{K^c}, \beta_{K^c}}\left( \mu_i(z, w) \right)$ can be
written as an integral linear combination of terms of the form 
\[ \frac{ (z - w)^\ell (\overline{z} - \overline{w})
    ^{\alpha_{K^c} - \beta_{K^c} + e_i + \ell}}
  {\| z - w \|^{2(d + |\alpha_{K^c}| + |\ell|)}} \]
where $\ell \in \N^d$ is such that $\beta_{K^c} - (\alpha_{K^c}+ e_i)
\leq \ell \leq \beta_{K^c}$, and where $e_i$ is the basis vector with
$i$th coordinate $1$ and zero elsewhere. So it suffices to bound
integrals of the form 
\begin{equation*}
|z^{\pmb{1} + \alpha_{K^c} + \beta_{K^c}}|
\int_{U'_K}  \frac{ |z - w|^{\alpha_{K^c} - \beta_{K^c} + e_i +
    2 \ell}}{\| z - w \|^{2(d + |\alpha_{K^c}| + |\ell|)}} \left(
  \frac{|\log\big|w_{[1,d]}|\big|^{-N}}{|w^{[1,d]}| |\overline{w}^J|} \right)  \left|
  dw \wedge d\overline{w} \right| .
\end{equation*}
But for $w \in U'_K$ and $j \in K^c$ we have that $|z_j - w_j| \geq |z_j|/3$ and also $|z_j - w_j| \leq 3|z_j|$ so this integral is bounded, up to a constant, by
\begin{equation} \label{EquationGrowthLemma3}
|z^{\pmb{1} + 2 \alpha_{K^c}}| \int_{U'_K}  \frac{ |z - w|^{2 \ell} |\overline{z}_i - \overline{w}_i|}{\| z - w \|^{2(d + |\alpha_{K^c}| + |\ell|)}} \left( \frac{\big|\log|w_{[1,d]}|\big|^{-N}}{|w^{[1,d]}| |\overline{w}^J|} \right)  \left| dw \wedge d\overline{w} \right|.
\end{equation}
By the weighted AM-GM inequality we have
\begin{equation*}
\| z - w \|^{2 |\ell|} \geq C_1 |z - w|^{2 \ell}, \quad \| z - w \|^{2 |\alpha_{K^c}|} \geq C_2 |z - w|^{2 \alpha_{K^c}},
\end{equation*}
%\begin{equation*}
%\frac{1}{\| z - w \|^{2 |\ell|}} \geq \left( \frac{|\ell|^{|\ell|}}{\prod_{j} \ell_j^{\ell_j}} \right) |z - w|^{2 \ell}, \;\;\; \frac{1}{\| z - w \|^{2 |\alpha_A|}} \geq \left( \frac{|\alpha_A|^{|\alpha_A|}}{\prod_{j} \alpha_{A,j}^{\alpha_{A, j}}} \right) |z - w|^{2 \alpha_A},
%\end{equation*}
for some constants $C_1$ and $C_2$ (depending only on $\ell$ and $\alpha_{K^c}$, respectively), which shows that, up to a constant, the expression of Equation \eqref{EquationGrowthLemma3} is bounded above by
\begin{equation} \label{EquationGrowthLemma4}
|z^{[1,d]}| \int_{U'_K}  \frac{|\overline{z}_i - \overline{w}_i|}{\| z - w \|^{2d}} \frac{\big|\log|w_{[1,d]}|\big|^{-N}}{|w^{[1,d]}| |\overline{w}^J|}  \left| dw \wedge d\overline{w} \right|.
\end{equation}
Then the result follows from Lemma \ref{LemmaBMestimation}.
\end{proof}

We can finally state and prove one of our key results.

\begin{theorem}
 \label{LemmaBM}
The Bochner-Martinelli operator of Definition \ref{DefinitionBM} induces well defined continuous operators
%\[ K : A_{si, c}^{0, q} \to A_{si}^{0, q-1}. \]
\[ K' : A_{rd, c}^{p, q} \to A_{rd}^{p, q-1} \]
for any $p \geq 0$ and $q \geq 0$. Moreover, these operators satisfy $\overline{\partial} K'  + K' \overline{\partial} = id$. 
\end{theorem}

\begin{proof}

To see that the operator is well defined, we need to prove that if $\eta \in A_{rd, c}^{p, q}$ is a rapidly decreasing form on $(\Delta^*)^m \times \Delta^n$ with relatively compact support, then the integral defining $K' \eta$ converges and gives a rapidly decreasing differential form. We have $K'\eta=\sum_{|I|=p} K'\eta_I \wedge \xi_I$ where $\eta=\sum_{|I|=p} \eta_I \wedge \xi_I$ and $\eta_I = \sum_{|J| = q} f_{I, J} \overline{\xi}_J$. According to equations \eqref{expressionK'} and \eqref{EquationDefgIJ}, we have
$$
(K' \eta_I)(z) = \frac{(d - 1)!}{(2 \pi i)^d} \sum_{i=1}^d\sum_{|J|=q} g_{i,I,J}(z) \wedge d \overline{z}_{J-\{i\}}
$$
where 
$$
g_{i,I,J}(z) =  z^{[1,m]} \int_{\Delta^d} \frac{(\overline{z}_i - \overline{w}_i) f_{I,J}(w)}{\|z - w \|^{2d} w^{[1,m]} \overline{w}^{J \cap [1,m]} } dw \wedge d \overline{w}
$$
and $f_{I,J}$ is a rapidly decreasing function on $(\Delta^*)^m \times \Delta^n$ with relatively compact support. Hence, by Lemma \ref{LemmaBMconvergence}, Lemma \ref{LemmaBMestimation} and Lemma \ref{LemmaDerivatives} $K' \eta$ is well defined and rapidly decreasing, showing the first claim.
Moreover, Lemma \ref{LemmaBMestimation} and Lemma \ref{LemmaDerivatives} also show that the operator $K'$ is bounded and hence continuous for every seminorm $p_{N, \alpha, \beta}$ and thus is continuous as an operator from the space of compactly supported rapidly decreasing differential forms equipped with the Schwartz topology of Definition \ref{DefTopologyrd} to the space of rapidly decreasing differential forms. As the operator $\overline{\partial}$ is continuous by definition, one deduces that the operator $\overline{\partial} K' + K' \overline{\partial}$ is also continuous.
%The continuity of the operator follows immediately applying Lemma \ref{lemmaBMdiff} to the operators of Definition \ref{Deflog} applied to $\eta$ and the estimates from Lemma \ref{LemmaBMestimation}, using the fact that the constants $C_m$ only depend on $m$. Indeed, let $(\eta_{i})_{i}$ be a sequence of rapidly decreasing functions going to zero in the topology of $\mathscr{A}^{0, q}_{rd}$. This means that the constants $C_{\eta_i, N}$ and $C_{D \eta_i, N}$ for every $D$ as in Definition \ref{Deflog} and every $N \in \N$ can be taken such that they go to zero as $i \to +\infty$. Hence $(K \eta_i)_i$ is a sequence converging to zero by Lemma \ref{LemmaBMestimation}
It follows from the classical Bochner-Martinelli-Koppelman formula (\cite[\S III Theorem 1.7]{Laurent}) that the identity $\overline{\partial} K' + K'\overline{\partial} = {\rm id} $ holds on the dense subspace of $A_{rd, c}^{0, q}$ of differential forms  on $(\Delta^*)^m \times \Delta^n$ with relatively compact support and hence the result follows by continuity of the operator.
\end{proof}

\subsection{Sheaves of differential forms with growth conditions} \label{SubsectionGlobal1}

Let $\overline{X}$ be a smooth compact complex manifold of dimension $d$, $D \subseteq \overline{X}$ a normal crossing divisor and $X = \overline{X} - D$. This means that each point $x \in \overline{X}$ has an open neighbourhood $U$ isomorphic to $\Delta^d$ with coordinates $(z_1, \hdots, z_d)$ for which $x = (0, \hdots, 0)$ and such that there exist some integers $m, n$, with $0 \leq n, m \leq d$ and $m + n = d$, such that
\[ X \cap U = (\Delta^*)^m \times (\Delta)^{n} = \{ (z_1, \hdots, z_d) \in \Delta^d \; | \; z_1 \hdots z_m \neq 0 \}. \] 
We will call such a subset $U$ an open coordinate neighbourhood (of $x$).

We denote by $\mathcal{O}_{\overline{X}}$ the structural sheaf of holomorphic functions on $\overline{X}$ and we denote by $\Omega_{\overline{X}}^*$ the holomorphic de Rham complex on $\overline{X}$. This is a complex of locally free $\mathcal{O}_{\overline{X}}$-module of finite type. Recall also that, if $j : X \to \overline{X}$ denotes the natural inclusion, then $\Omega^*_{\overline{X}}(\log \, D)$ is defined to be the sub-$\mathcal{O}_{\overline{X}}$-algebra of $j_* \Omega_{X}^*$ locally generated by the sections $\xi_i$, $1 \leq i \leq d$, where we recall the reader from Notation \ref{notationmulti} that $\xi_i = \frac{dz_i}{z_i}$ for  $1 \leq i \leq m$, and $\xi_i = d z_i$,  $m < i \leq d$. We shall denote by $\mathscr{A}^0_{\overline{X}}$ the sheaf of smooth functions on $\overline{X}$ and $\mathscr{A}_{\overline{X}}^*$ the complex of sheaves of smooth differential forms. The complex $\Alog$ of smooth differential forms on $X$ with logarithmic growth at $D$ is defined (cf. \cite[\S 2]{Burgos}) to be the $\mathscr{A}_{\overline{X}}$-algebra subsheaf of $j_* \mathscr{A}^*_{X}$ locally generated by the sections
\[ \log | z_i | \;\;\; (1 \leq i \leq m), \]
\[  \xi_i, \; \overline{\xi}_i \;\;\;\; (1 \leq i \leq d). \]

\begin{definition} \label{Deflogdifferentialforms}
We denote by $\mathscr{A}_{si}^0$ (resp. $\mathscr{A}_{rd}^0$) the sheaf on $\overline{X}$ whose sections on $U \subseteq \overline{X}$ are given by complex valued functions on $U \cap X$ which are locally at each point of $U$ slowly increasing (resp. rapidly decreasing). We define the graded sheaf $\Asi$ of slowly increasing differential forms to be the $\mathscr{A}^0_{si}$-subalgebra of $j_* \mathscr{A}^*_{X}$ locally generated by $\xi_i$, $\overline{\xi}_i $  for $1 \leq i \leq d$.
Similarly, the graded sheaf $\Ard$ of rapidly decreasing differential forms is defined as the $\mathscr{A}^0_{rd}$-algebra of $j_* \mathscr{A}^*_{X}$ locally generated by $\xi_i$, $\overline{\xi}_i $  for $1 \leq i \leq d$.
\end{definition}

\begin{remark} \leavevmode
\begin{enumerate}
\item Observe that, even if it is not reflected in the notation, the sheaves $\mathscr{A}^*_{si}$ and $\mathscr{A}^*_{rd}$ depend on the divisor $D$.  

\item More precisely, for any open subset $U \subseteq \overline{X}$, a differential form $\omega \in j_* \mathscr{A}_X^*(U)$ lies in $\Asi(U)$ (resp. $\Ard(U)$) if it can locally be written as
\[ \omega = \sum_{I, J} \alpha_{I, J} \xi_I  \wedge \overline{\xi}_{J}, \]
where $I,J \subseteq [1,d]$ and $\alpha_{I, J}$ is a function in $\mathscr{A}_{ si}^0(U) \cong A^0_{si}$ (resp. $\mathscr{A}_{rd}^0(U) \cong A^0_{rd}$).
\item By the definition of slowly increasing and rapidly decreasing functions, if a form $\omega$ belongs to $\Asi$ (resp. $\Ard$) then $\partial \omega$ and $\overline{\partial} \omega$ also belong to $\Asi$ (resp. $\Ard$). Therefore $\Asi$ and $\Ard$ are differential graded algebras.
\end{enumerate}
\end{remark}

There are natural inclusions
\[ \Ard \subseteq \Alog \subseteq \Asi \subseteq j_* \mathscr{A}_{X}^*. \] Moreover, these are all complexes of fine sheaves since their terms are modules over $\mathscr{A}_{\overline{X}}^0$ and hence admit partitions of unity. We denote by $\mathscr{A}_{rd}^*(\overline{X})$ and $\mathscr{A}_{si}^*(\overline{X})$ the corresponding complexes of global sections. Moreover, the complex structure on $\overline{X}$ induces compatible bigradings
\[ \mathscr{A}_{rd}^n = \bigoplus_{p + q = n} \mathscr{A}_{rd}^{p, q},
  \;\;\;  \mathscr{A}^n_{\overline{X}}(\log \, D) = \bigoplus_{p + q =
    n} \mathscr{A}^{p, q}_{\overline{X}}(\log \, D), \;\;\;
  \mathscr{A}_{si}^n = \bigoplus_{p + q = n} \mathscr{A}_{si}^{p,
    q}, \] with corresponding Hodge filtrations $F^p
\mathscr{A}_{rd}^n = \bigoplus_{p' \geq p} \mathscr{A}^{p', q}_{rd}$,
etc. Finally, we denote by $\mathscr{A}_{si, \R}^* \subseteq \Asi$,
$\mathscr{A}_{rd, \R}^* \subseteq \Ard$, $\mathscr{A}^*_{\overline{X},
  \R}(\log \, D) \subseteq \mathscr{A}^*_{\overline{X}}(\log \, D)$
the subcomplexes of sheaves of $\R$-valued differential forms.

\begin{example}\label{exm:1}
  Using the Taylor expansion of smooth functions, one can check that a
  differential form $\omega \in \mathscr{A}^*_{\overline{X}}$ 
  belongs to $\mathscr{A}^*_{rd}$ if and only if the restriction
  $\omega |_{D_{sm}}$ is zero. Here $D_{sm}$ is the submanifold of
  smooth points of $D$. 
\end{example}

For the record, we have the following comparison results.

\begin{proposition}[{\cite[Theorem 2.1, Corollary 2.2]{Burgos}}, {\cite[Proposition 5.8]{HarrisZuckerIII}}, {\cite[\S 3.7]{KatoMatsubaraNakayama}}] \label{PropBurgos}
The natural maps $(\Omega^*_{\overline{X}}(\log \, D), F) \to (\mathscr{A}^*_{\overline{X}}(\log \, D), F) \to (\mathscr{A}^*_{si}, F)$ are filtered quasi-isomorphisms and $R j_* \R \to \mathscr{A}^*_{\overline{X}, \R}(\log \, D) \to \mathscr{A}^*_{si, \R}$ is a quasi-isomorphism.
\end{proposition}

\subsection{Compactly supported cohomology}

As a first application of the Bochner-Martinelli operator, we show that the complexes or rapidly decreasing smooth differential forms calculate the cohomology of $X$ with compact support. We point out that this result was stated in \cite{Harrisdbarcohomology} and \cite[Proposition 2.2.4]{HarrisZuckerIII} but the proof in these references does not seem to hold as the usual Bochner-Martinelli operator does not preserve rapidly decreasing differential forms. Indeed, if $g(z)$ is any smooth compactly supported differential form on $\Delta$ such that $g(0) \neq 0$, then $\eta:= \overline{\partial} g(z) = \frac{\partial}{\partial \overline{z}} g(z) d \overline{z} = \frac{\partial}{\partial \overline{z}} g(z) \overline{z} \frac{d \overline{z}}{\overline{z}}$ is a rapidly decreasing differential form of type $(0,1)$ but $(K\eta)(z) = g(z)$ is clearly not rapidly decreasing because $g(0)\not = 0$.

\begin{proposition}\label{PropOnCSC}
For any $p \geq 0$, the complex $\mathscr{A}^{p, *}_{rd}$ is a fine resolution of the sheaf $\Omega^p_{\overline{X}}(\log \, D)(-D)$, i.e. there is a long exact sequence
\[ 0 \to \Omega^p_{\overline{X}}(\log \, D)(- D) \to \mathscr{A}_{rd}^{p, 0} \xrightarrow{\overline{\partial}} \mathscr{A}_{rd}^{p, 1} \xrightarrow{\overline{\partial}} \ldots \xrightarrow{\overline{\partial}} \mathscr{A}_{rd}^{p, d} \to 0. \]
\end{proposition}

\begin{proof}
It suffices to prove exactness at the level of stalks so we can
restrict to an open coordinate neighborhood $V$ of a point $x \in
\overline{X}$  isomorphic to $\Delta ^{d}$ with  
$U=X\cap V = (\Delta^*)^m \times (\Delta ) ^{n}$ with $m+n = d$. We first prove exactness at
the left. Let $\varphi = \sum_{|I| = p} f_{I}(z)
\xi_I \in \mathscr{A}_{rd}^{p, 0}$ be such that
$\overline{\partial} \varphi = 0$. Then each $f_{I}(z)$ satisfies
$\overline{\partial} f_{I}(z) = 0$ and is hence a holomorphic
function. In particular, $f_{I}(z)$ is holomorphic on $V$ and
vanishes at $D$ (since it is rapidly decreasing), hence it is divisible by
$z_{[1,m]}$. Writing $f_{I}(z) = z_{[1,m]} g_{I}(z)$ for some holomorphic
function $g_{I}(z)$ on $V$, this shows that $\varphi = \sum_{|I|
  = p} z_{[1,m]} g_{I}(z) \xi_I$ with
$g_{I}$ holomorphic, i.e. $\varphi \in 
\Omega^p(\log \, D)(-D)$ as wished. 

We now show that the exact sequence is exact in the middle.
Let $\varphi \in \mathscr{A}_{rd}^{p, q}(V)$ with $q \geq 1$ be such
that $\overline{\partial} \varphi = 0$. Let $\eta$ be a bump function
such that $\eta \equiv 1$ in a strictly smaller open polydisc $V' \subseteq V$ and vanishing outside a strictly smaller polydisc $V'' \subseteq V$. Since $\eta \varphi \in \mathscr{A}_{rd, c}^{p, q}(V)$, By Theorem \ref{LemmaBM} we have $K' \overline{\partial}(\eta \varphi) + \overline{\partial} K' (\eta \varphi) = \eta \varphi$ on $V$. Since $\overline{\partial}(\eta \varphi) = \overline{\partial}\eta \wedge \varphi$, we deduce that
\[ K' (\overline{\partial} \eta \wedge \varphi) + \overline{\partial} K'(\eta \varphi) = \eta \varphi. \]
The differential form $\overline{\partial} \eta \wedge \varphi$ is
%closed (since $\overline{\partial}(\overline{\partial} \eta \wedge \varphi) = \overline{\partial}^2 \eta \wedge \varphi \pm \overline{\partial} \eta \wedge \overline{\partial} \varphi = 0$),
compactly supported (since $\overline{\partial} \eta$ is compactly
supported), smooth and closed. Hence $\kappa :=
K'(\overline{\partial} \eta \wedge \varphi)$ is a smooth differential
form on $V$ which is moreover rapidly decreasing by Theorem
\ref{LemmaBM}.

%Applying Theorem \ref{LemmaBM} to $\alpha$ we obtain $\overline{\partial} K \alpha = \alpha$ since $\alpha$ is closed. But the form $\kappa := K \alpha$ equals $\eta \varphi - \overline{\partial} K (\eta \varphi)$, si in particular, on $V'$ is is also closed. Hence, we deduce that $\alpha$ vaishes on $V'$
Since $\eta \equiv 1$ on $V'$, restricting to $V'$ we obtain
\[ \kappa + \overline{\partial} K'(\eta \varphi) = \varphi. \] In
particular, the smooth differential form $\kappa$ is closed (on $V'$)
since it is cohomologous to $\varphi$ which is closed. Hence by the
usual Poincar\'e lemma for smooth differential forms, we have $\kappa
= \overline{\partial} \alpha$ for some smooth differential form
$\alpha$ on $V'$. We would be done if we knew that $\alpha$ is rapidly
decreasing. We will now construct a rapidly decreasing  differential form
$\alpha'$ such that $\overline{\partial} \alpha' = \overline{\partial}
\alpha = \kappa$.

Let $D\cap V=\bigcup_{i=1}^k D_i$ be the decomposition of $D\cap V$ into
smooth components. Each $D_i$ is a coordinate hyperplane for the
coordinates of $V$ and there are holomorphic projections $\pi _i\colon
V\to D_i$. By example \ref{exm:1}, for each $i$, $\kappa
|_{D_i}=0$. Therefore $\alpha |_{D_i}$ is $\overline{ \partial}$-closed. We
write inductively
\begin{displaymath}
  \alpha _0=\alpha , \qquad \alpha _i=\alpha _{i-1}-\pi
  _{i}^\ast\alpha _{i-1}|_{D_i}. 
\end{displaymath}
These forms satisfy
\begin{displaymath}
  \overline{ \partial} \alpha_i= \overline{ \partial} \alpha_{i-1} 
\end{displaymath}
because $\alpha_{i-1}|_{D_i}$ is $\overline{ \partial}$-closed, and
\begin{displaymath}
  \alpha _i|_{D_j}=0,\quad \text{for }j\le i.
\end{displaymath}
In particular $\alpha _k|_{D_{sm}}=0$. Therefore by example
\ref{exm:1}, the form $\alpha _k$ is rapidly decreasing. Since
$\overline{ \partial}\alpha _k=\overline{ \partial}\alpha =\kappa $ we
deduce that $\kappa $ is exact in the complex $\mathscr{A}^*_{rd}$
completing the proof of the proposition.
\end{proof}

\begin{remark}
 Proposition \ref{PropOnCSC} can also be proved using the usual inductive argument to reduce the proof of the $\overline{\partial}$-Poincar\'e lemma to the case of dimension $1$ (cf. \cite[p. 25]{GriffithsHarris}). Observe that, for the induction step to be valid, it is crucial to impose growth conditions on all derivatives in the definition of rapidly decreasing differential form. In the case of dimension $1$ the above proof becomes much simpler. Indeed, let $\varphi \in \mathscr{A}^{0,1}_{rd}(\Delta)$ be a rapidly decreasing differential form with $\overline{\partial} \varphi = 0$. Let $\eta$ denote the bump function as in the proof of Proposition \ref{PropOnCSC}. By Theorem \ref{LemmaBM}, we have that $\overline{\partial} K' (\eta \varphi) + K' \overline{\partial} (\eta \varphi) = \eta \varphi$. But $\overline{\partial} (\eta \varphi) = \overline{\partial}\eta \wedge \varphi = 0$ for dimension reasons. This shows that, after restricting to a smaller disc, $\varphi$ is $\overline{\partial}$-exact, as wished.
\end{remark}

\subsection{Tempered currents and cohomology}

Let $\overline{X}, X$ and $D$ be as in \S \ref{SubsectionGlobal1}. In this section we define sheaves of tempered currents by considering continuous linear forms on compactly supported rapidly decreasing forms. Our main purpose is to show that the Hodge complex of tempered currents calculates cohomology of $X$ with its Hodge filtration and real structure.

\begin{definition}
For any $0 \leq p, q \leq d$, we define the sheaf $\Dist^{p,q}$ of tempered currents to be the sheaf on $\overline{X}$ assigning to any open coordinate neighbourhood $U \subseteq \overline{X}$ the complex vector space $\Dist^{p,q}(U)$ of continuous complex linear forms on the compactly supported sections $\Gamma_c(U, \mathscr{A}^{d-p, d-q}_{rd})$. Similarly we denote by $\mathscr{D}_{si, \R}^{p, q}$ the sheaf on $\overline{X}$ assigning to any open coordinate neighbourhood $U \subseteq \overline{X}$ the real vector space $\mathscr{D}_{si, \R}^{p,q}(U)$ of continuous real linear forms on the compactly supported sections $\Gamma_c(U, \mathscr{A}^{d-p, d-q}_{rd, \R})$.
\end{definition}

\begin{remark} \label{remtoprd} \leavevmode
\begin{enumerate}
%\item Observe that the assignment $U \mapsto \Gamma_c(U, \mathscr{A}^{d-p, d-q}_{rd})$ is a cosheaf and hence $\Dist^{p, q}$ defines a sheaf on $\overline{X}$.

\item The space $\Gamma_c(U, \mathscr{A}^0_{rd})$ of compactly supported rapidly decreasing functions on $\overline{X}$ is equipped with the usual Schwartz topology as in Definition \ref{DefTopologyrd}.

\item Observe that the boundary $D$ might intersect $U$ and hence an element in  $\Gamma_c(U, \mathscr{A}^0_{rd})$ is a smooth function on $U \cap X$ with growth conditions along $D \cap U$ and whose support is contained in a compact subset $K$ of $U$. For example, if $\overline{X} = \Delta$, $D = \{0\}$, $X = \Delta^*$, and $U \subseteq \Delta$ is any open ball around the origin, then an element in $\Gamma_c(U, \mathscr{A}_{rd}^{d - p, d - q})$ can still have singularity at $z = 0$, but it will vanish outside some closed ball centered at the origin contained in $U$.
\end{enumerate}
\end{remark}

For any open subset $U \subseteq \overline{X}$, let $T \in \Dist^{p, q}(U)$ and $\omega \in \mathscr{A}^{p', q'}_{si}(U)$. Since the product of a slowly increasing differential form against a rapidly decreasing differential form is rapidly decreasing, the formula
\[ (T\omega)(\eta) = T(\omega\wedge \eta) \]
is a well defined element in $\Dist^{p+p', q+q'}(U)$. This induces a $\mathscr{A}_{si}^0$-bilinear map
\[  \Dist^{p', q'} \otimes_{\mathscr{A}_{si}^0}\mathscr{A}_{si}^{p, q} \to \Dist^{p+p', q+q'}, \]
equipping $\Dist^{*, *}$ with a bigraded right $\mathscr{A}_{si}^{*,*}$-module structure.

\begin{lemma} \label{LemmatempDist} Let $0 \leq p, q \leq d$. The map
\[ \Dist^{p - p', q - q'} \otimes_{\mathscr{A}_{\overline{X}}^0(\log \, D)} \mathscr{A}_{\overline{X}}^{p', q'}(\log \, D) \to \Dist^{p, q} \]
is an isomorphism for any $p' \in \{0, p\}$ and any $q' \in \{0, q\}$. In particular
\[\mu\colon \Dist^{0, q}  \otimes_{\mathcal{O}_{\overline{X}}} \Omega_{\overline{X}}^p(\log \, D)\to \Dist^{p, q} \]
is an isomorphism.
\end{lemma}

\begin{proof}
We give the proof for $p' = p$ and $q' =0$, the other cases being similar. It suffices to prove the isomorphism at stalks. Let $x \in \overline{X}$ and let $U \subseteq \overline{X}$ be a sufficiently small coordinate open neighborhood of $x$ such that $U \cong \Delta^d$, $U \cap X \cong (\Delta^*)^m \times \Delta^{n}$ and $x$ is mapped to the origin. Let $\{\xi_I\}_{I}$, where $I\subseteq [1, d]$ with $|I|= p$, be a basis of $\Omega^p_{\overline{X}}(\log \, D)(U)$ over $\mathcal{O}_{\overline{X}}(U)$. If $T \in \Dist^{p, q}(U)$, then, for any $I$ as above, define $T_{I} \in \Dist^{0, q}(U)$ by 
\[ T_{I} \left( \eta(z) \wedge \xi_{[1,d]} \right) = T \left( \eta(z) \wedge \xi_{I^c} \right), \]
where $I^c =[1, d] \smallsetminus I$ and $\eta \in \Gamma_c(U, \mathscr{A}_{rd}^{0, d - q})$. Then we have
\[ T = \sum_{|I| = p} \pm \mu(T_{I} \otimes \xi_I ), \]
showing that the multiplication map is surjective. Moreover, one easily checks that this decomposition is unique, showing that the map is injective. This finishes the proof.
\end{proof}

By Lemma \ref{LemmatempDist}, for any coordinate open neighbourhood $U \subseteq \overline{X}$, we can write a tempered current $T \in \Dist^{p, q}(U)$ as a differential form with coefficients in tempered distributions: 
\[ T = \sum_{|I| = p, |J| = q} T_{I, J} \xi_I \wedge \overline{\xi}_J, \]
where $T_{I, J} \in \Dist^{0, 0}(U)$. The exterior derivative of rapidly decreasing forms induces a differential $d : \Dist^n \to \Dist^{n + 1}$ defined by
\[ d T (\omega) = (-1)^{n + 1} T(d \omega).\]
We have $d = \partial + \overline{\partial}$ with $\partial : \Dist^{p, q} \to \Dist^{p + 1, q}$, $\overline{\partial} : \Dist^{p, q} \to \Dist^{p, q+1}$ defined in the analogous way. This defines complexes of sheaves of tempered currents $\Dist^*$, $\Dist^{*, *}$ (with differentials $d, \partial, \overline{\partial}$). Since currents are modules over $\mathscr{A}_{si}^0$ all these complexes are complexes of fine sheaves. We will denote by $\Dist^{*,*}(\overline{X})$ the corresponding complexes of global sections. Moreover $\Dist^*$ is equipped with a Hodge filtration given by
\[ F^p \Dist^* = \bigoplus_{p' \geq p} \Dist^{p', q}. \]
For any open subset $U \subseteq \overline{X}$, there is a natural way to associate to any form $\omega \in \mathscr{A}^{p, q}_{si}(U)$ a current $T_\omega \in \Dist^{p, q}(U)$ given by
\[ T_\omega(\eta) = \frac{1}{(2 \pi i)^d}\int_U \omega \wedge \eta, \;\;\; (\eta \in \Gamma_c(U, \mathscr{A}_{rd}^{d - p, d - q})) \] Using Lemma \ref{LemmatempDist}, the current $T_\omega$ can also be described as the product of $\omega$ with the normalized trace distribution in $\Dist^{0,0}(U)$ defined by $\eta \in \Gamma_c(U, \mathscr{A}^{d,d}_{rd}) \mapsto \frac{1}{(2\pi i)^d} \int_U \eta$ (recall that every rapidly decreasing differential form is integrable). By Stokes formula \cite{borel1} we have $d T_\omega = T_{d \omega}$, $\partial T_\omega =  T_{\partial \omega}$, etc., which implies that there is a filtered morphism of complexes
\[ \iota : \mathscr{A}^*_{si} \to \Dist^*. \]
We now show a Poincar\'e lemma for tempered currents as our main application of Theorem \ref{LemmaBM}. The proof is adapted from \cite[Theorem 4.5]{BurgosLitcanu}.

\begin{theorem} \label{Propsicurrents}
The map
%\[ \iota : (\mathscr{A}_{si}^*, F) \to (\Dist^*, F) \]
\[ \iota : (\mathscr{A}_{\overline{X}}^*(\log \, D), F) \to (\Dist^*, F) \]
is a filtered quasi-isomorphism compatible with the underlying real structures.
\end{theorem}

\begin{proof}
The statement of the Theorem is equivalent to showing that the inclusion
\begin{equation} \label{EquationQisom} \iota : \mathscr{A}_{\overline{X}}^{p, *}(\log D) \to \Dist^{p, *}
\end{equation}
is a quasi-isomorphism for any $p \geq 0$.
Since exactness can be checked at the level of stalks, it suffices to show that
\[ \iota_x : \mathscr{A}_{\overline{X}}^{p, *}(\log D)_x \to (\Dist^{p, *})_x \]
is a quasi-isomorphism for any $x \in \overline{X}$ and any $p \geq 0$.

Let $V = \Delta^d$ be an open neighborhood of $x$ in $\overline{X}$. One would like to use the Bochner-Martinelli operator $K'$ to construct an homotopy to show that this inclusion is a quasi-isomorph\-ism. The problem is that the formula $(K' T)(\omega) = T(K' \omega)$, for $T \in \Dist^{p, q}(V)$ and $\omega \in \Gamma_c(V, \mathscr{A}_{rd}^{d - p, d - q})$, is not well defined since $K' \omega$ is not necessarily a compactly supported differential form on $V$. Indeed $K' \omega$ might never vanish on $V$. To contour this problem, we will define a homotopy operator for the inductive systems defining the stalks at $x$ by using auxiliary truncating bump functions.

We start by fixing some notations. Let $(\epsilon_k)_{k \in \N}$, $1/2 \geq \epsilon_k > 0$ (e.g. $\epsilon_k = 1/2^{k+1}$), be a decreasing sequence of positive real numbers converging to zero. Then we can consider a system of open neighborhoods of $x$ in $\overline{X}$ given by $(V_k)_{k \in \N}$ with $V_k = \Delta_{\epsilon_k}^d$ and such that $U_k := V_k \cap X =(\Delta^*_{\epsilon_k})^m \times \Delta_{\epsilon_k}^n$ for some $m, n \geq 0$, $n + m = d$. Observe that, if $x \in X$, then $m = 0$ and the following also gives a proof of the quasi-isomorphism for the usual complex of currents with no growth conditions. We also let $\eta_k$ be a compactly supported smooth real function on $V_k$ such that $\eta_k \equiv 1$ on $\Delta_{\delta_k}^d$ and $\equiv 0$ outside $\Delta_{\delta'_k}^d$ for some $\epsilon_{k+1} < \delta_k < \delta'_k < \epsilon_n$, i.e. $\eta_n$ is a bump function which is constant equal to $1$ on some polydisc slightly bigger than $V_{n+1}$ and whose support is contained in some polydisc slightly smaller than $V_n$.

For any $k \in \N$, we define an operator
\[ K'_k : \Dist^{p, q}(V_k) \to \Dist^{p, q - 1}(V_{k+1}) \]
by the formula
\[ (K'_k T)(\varphi) := (-1)^{p+q} T(\eta_k K' \varphi), \;\;\; \varphi \in \Gamma_c(V_{k + 1}, \mathscr{A}_{rd}^{d-p, d - q + 1}). \]
Note that this formula is well defined as $\eta_k K' \varphi \in \Gamma_c(V_{k}, \mathscr{A}_{rd}^{d - p, d - q})$ by Theorem \ref{LemmaBM} (for the rapidly decreasing condition) and by construction of $\eta_k$ (for the compact support condition).
Let $T \in \Dist^{p, q}(V_k)$ and 
 let $\varphi \in \Gamma_c(V_{k+1}, \mathscr{A}_{rd}^{d - p, d - q})$ be a test form.
%Since $q < d$, we have that $d-  q \geq 1$ and
By Theorem \ref{LemmaBM} we have $\overline{\partial} K' \varphi + K' \overline{\partial} \varphi = \varphi$. As $\eta_n \varphi = \varphi$ (because $\eta_k \equiv 1$ on $V_{k+1}$), we have that 
\begin{align*} (\overline{\partial} K'_k T + K'_k \overline{\partial} T) (\varphi) &=  T (\eta_k K' \overline{\partial} \varphi) + T(\overline{\partial} (\eta_k K' \varphi)) \\
&= T(\eta_k (K' \overline{\partial} \varphi + \overline{\partial} K' \varphi)) + T (\overline{\partial} \eta_k \wedge K' \varphi) \\
&= T (\varphi) + T (\overline{\partial} \eta_k \wedge K'\varphi).
\end{align*}
Write $\varphi(z) = \sum_{I, J} f_{I, J}(z) \overline{\xi}_J \wedge \xi_I$, where $I, J$ run over subsets of $[1,d]$ of size $d - p$ and $d-q$ respectively. By
definition we have
\[ (K' \varphi)(z) = C_d \sum_{I, J, i} \left( z^{[1, m]} \int_{\Delta^d} \mu_i(z, w) \frac{f_{I, J}(w)}{w^{[1, m]} \overline{w}^{J \cap [1,m]}} dw \wedge d \overline{w} \right) \wedge d\overline{z}_{J - \{i\}} \wedge \xi_I , \]
where $C_d = \frac{(d - 1)!}{(2 i \pi)^d}$, $i$ runs over all elements of $J$ and where as before we noted $\mu_i(z, w) = \frac{\overline{z}_i - \overline{w}_i}{\| z - w\|^{2d}}$. We have then
\begin{multline*}
    T(\overline{\partial} \eta_k(z) \wedge K' \varphi) =\\ C_d T \left( \overline{\partial} \eta_k(z) \wedge \sum_{I, J, i} \left( z^{I^c \cap [1,m]} \int_{\Delta^m} \mu_i(z, w) \frac{f_{I, J}(w)}{w^{[1, m]} \overline{w}^{J \cap [1,m]}} d \overline{w} \wedge d w  \right) \wedge d\overline{z}_{J - \{i\}} \wedge dz_I \right) 
\end{multline*}
which, reordering the terms, can be written as
%\[ C_d  \sum_{I, J, i} \left( \int_{\Delta^m} T \left( z^{I^c \cap [1,m]} \mu_i(z, w) \, \overline{\partial} \eta_n(z) \wedge d\overline{z}_{J - \{i\}} \wedge dz_I \right) \wedge d \overline{w}_{J^c} \wedge \frac{d w_{I^c}}{w_{I^c}}  \wedge f_{I, J}(w) \frac{d \overline{w}_J}{\overline{w}_J} \wedge \frac{d w_I}{w_I} \right),
%\]
\[ C_d  \sum_{I, J, i} \pm \int_{\Delta^m} \big( A(T,\eta,I,J,i)(w) d \overline{w}_{J^c} \wedge  \xi_{w, I^c} \big) \wedge \big( f_{I, J}(w) \overline{\xi}_{w, J} \wedge \xi_{w, I} \big),
\]
where
\[
A(T,\eta,I,J,i)(w) := \pm T \left( z^{I^c \cap [1,m]} \mu_i(z, w) \, \overline{\partial} \eta_k(z) \wedge d\overline{z}_{J - \{i\}} \wedge dz_I \right)
\]
and we denoted as $\xi_{w, I^c} = \frac{1}{w^{I^c \cap [1,m]}} d w_{I^c}$, $\overline{\xi}_{w, J}$ and $\xi_{w, I}$ the analogous forms to $\xi_{I^c}$, $\overline{\xi}_J$ and $\xi_I$ but with the variable $w$, and
the ambiguity about the sign is coming from the exchange of the order of the differentials. Now, as $\overline{\partial} \eta_k(z)$ vanishes identically on the polydisc $\Delta_{\delta_k}^m$, then the differential form $z^{I^c \cap [1,m]} \mu_i(z, w) \, \overline{\partial} \eta_k(z) \wedge d\overline{z}_{J - \{i\}} \wedge dz_I$ is well defined on the domain $\{ (z, w) \in V_k \times V_{k+1}\}$ and is smooth with respect to both variables $z$ and $w$. Indeed, the only possible singularities are on $\mu_i(z, w)$ and occur in the diagonal but they are never reached as $\overline{\partial} \eta_k(z)$ vanishes identically in a strict neighborhood of $V_{n+1}$.
%observe that since $\overline{\partial} \eta_n(z)$ vanishes on $\Delta^d_{\delta_n}$ one can replace $\overline{\partial} \eta_n(z) \wedge k(z, w)$ by $\overline{\partial} \eta_n(z) \wedge (1 - \eta_n'(\| z -w\|) k(z, w)$, where $\eta'_n$ is a bump function of $\R$ which is $\equiv 1$ on $\{ r \leq \frac{\delta_n - \epsilon_{n+1}}{2}\}$ and $\equiv 0$ outside some sufficiently close bigger ball, which shows that the differential form is smooth (since the only singularities of $k(z, w)$ lie on the diagonal).
Hence 
$
A(T,\eta,I,J,i)(w)
$ 
is a smooth differential form on $V_{n + 1}$
%$V_{n+1}-\{(w_i) \in \Delta^d| w_{[1,a]}=0\}$
(c.f. for example \cite[Theorem 2.1.3]{Hormander})  so 
$A(T,\eta,I,J,i)(w)\wedge d \overline{w}_{J^c} \wedge \xi_{w, I^c}$ is
a logarithmic differential form.
Therefore the current on $V_{k+1}$ given by
\[ \varphi \mapsto T( \overline{\partial} \eta_k(z) \wedge K' \varphi) \]
is the current induced by the logarithmic differential form
\[
 \sum_{I, J, i} A(T,\eta,I,J,i)(w) d \overline{w}_{J^c} \wedge \xi_{w, I^c} 
\]
in the variable $w$. We denote this logarithmic differential form by $-j_k(T)$. So we have shown that, for any $T \in \Dist^{p,q}(V_k)$, we have
\begin{equation} \label{EquationHomotopyFormula}
T|_{V_{k+1}} - \iota_{k+1} (j_k(T)) = \overline{\partial} K'_k T + K'_k \overline{\partial} T,
\end{equation}
where $\iota_k : \mathscr{A}_{\overline{X}}^{p, q}(\log \, D)(V_k) \to \Dist^{p, q}(V_k)$ denotes the map induced by $\iota$. This shows that we have defined operators
\[ j_k : \Dist^{p, q}(V_k) \to \Gamma(V_{k+1},\mathscr{A}_{\overline{X}}^{p, q}(\log D)), \]
which, by \eqref{EquationHomotopyFormula} give a quasi-inverse for the the inductive system of maps  of complexes $(\iota_k : \Gamma(V_k, \mathscr{A}_{\overline{X}}^{p, *}(\log \, D)) \to \Dist^{p, *}(V_k))_{k \in \N}$. In other words, we see from Equation \eqref{EquationHomotopyFormula} that if $T \in \Dist^{p,q}(V_k)$ is a closed tempered current on $V_k$, then its restriction to $V_{k+1}$ is cohomologous to the current associated with the differential form $j_k(T)$, which shows that the map induced by $\iota_x$ on cohomology is surjective. Conversely, if $\psi \in \Gamma(V_k, \mathscr{A}_{\overline{X}}^{p, q}(\log \, D))$ is such that $\iota_k(\psi) = \overline{\partial} T$, then by applying $\overline{\partial}$ to Equation \eqref{EquationHomotopyFormula} we obtain $\psi|_{V_{k+1}} = \overline{\partial} T|_{V_{k+1}} = \overline{\partial} K_k \psi + \overline{\partial} j_k(T)$, which shows that $\psi$ is exact, and thus the maps on cohomology induced by $\iota_x$ are injective. This shows that $\iota$ induces isomorphisms on stalks and finishes the proof of the result.
\end{proof}

\subsection{Application to Deligne--Beilinson cohomology}

We now apply the results obtained in the last section to give some useful descriptions of $DB$-cohomology. We recall that, for any $p \in \Z$ the $DB$-cohomology complex is defined as
\begin{equation} \label{defDBcohom2}
\R(p)_{\mathcal{D}} := \text{cone}(Rj_*\R(p) \oplus F^p\Omega^*_{\overline{X}}(\log D) \rightarrow Rj_* \Omega^*_X)[-1].
\end{equation}
Let $\mathscr{A}_{si}^*(\overline{X})$, resp. $\mathscr{A}_{si, \R(p-1)}^*(\overline{X})$, denote the global sections of the complex of sheaves $\mathscr{A}_{si}^*$, resp. $\mathscr{A}_{si, \R(p-1)}^*$.

\begin{proposition} \label{DBcoh-diff-form}
There is a quasi-isomorphism
\[ \R(p)_{\mathcal{D}} \simeq \mathrm{cone}( F^p \mathscr{A}_{si}^* \to \mathscr{A}_{si, \R(p-1)}^*)[-1], \]
where the arrow is induced by the projection $\pi_{p-1} : \C \to \R(p-1)$ defined by $\pi_{p - 1}(z) = \frac{z + (-1)^{p-1} \overline{z}}{2}$.
In particular, we have canonical isomorphisms
\begin{equation}\label{delignecoh0}
H^n_{\mathcal{D}}(X, \R(p)) \simeq \frac{\{ (\phi, \phi') \in F^p \mathscr{A}_{si}^n(\overline{X}) \oplus \mathscr{A}_{si, \R(p-1)}^{n-1}(\overline{X}) \,|\, d\phi = 0, d \phi' = \pi_{p-1}(\phi) \}}{\{d(\widetilde{\phi}, \widetilde{\phi}')\}},
\end{equation}
where $d(\widetilde{\phi}, 
\widetilde{\phi'}) = (d \widetilde{\phi}, d \widetilde{\phi'} - \pi_{p - 1}(\widetilde{\phi}))$.
\end{proposition}

\begin{proof}
The first assertion follows from Proposition \ref{PropBurgos}. The final one follows from the fact that the hypercohomology of the complex
\[ \mathrm{cone}( F^p \mathscr{A}_{si}^* \to \mathscr{A}_{si, \R(p-1)}^*)[-1] \]
is given by the cohomology of the corresponding complex of global sections since the sheaves are fine.
\end{proof}

\begin{remark} \label{unitDeligne} Let
$r_{\mathcal{D}}: H^1_{\mathcal{M}}(X, \Q(1)) \rightarrow H^1_{\mathcal{D}}(X, \R(1))$ be Beilinson regulator (cf. \S \ref{subsecintegralexpression}). Recall the canonical isomorphism $\mathcal{O}(X)^\times \otimes \Q \simeq H^1_{\mathcal{M}}(X, \Q(1))$. Then for $u \in \mathcal{O}(X)^\times$, the Deligne cohomology class $r_{\mathcal{D}}(u \otimes 1)$ is represented by $(d \log(u), \log|u|) \in F^1 \mathscr{A}_{si}^1(\overline{X}) \oplus \mathscr{A}_{si, \R(0)}^0(\overline{X})$.
\end{remark}

Consider the map $F^p \Dist^* \to \mathscr{D}^*_{si, \R(p-1)}$ given by the composition of the inclusion of $F^p \Dist^*$ into $\Dist^*$ and the map $\Dist^* \to F^p \mathscr{D}^*_{si, \R(p-1)}$ induced by $\pi_{p-1}: \C \to \R(p-1)$. With an abuse of notation, we will also denote this map by $\pi_{p-1}$. The following result is the key to the calculations of this article.

\begin{theorem} \label{TheoremAHCcurrents}
We have
\[ \R(p)_{\mathcal{D}} \simeq \mathrm{cone} \left( F^p \Dist^* \to \mathscr{D}^*_{si, \R(p-1)} \right)[-1]. \]
In particular,
\[ H^n_{\mathcal{D}}(X, \R(p)) \simeq \frac{\{ (S, T) \in F^p \Dist^n(\overline{X}) \oplus \mathscr{D}^{n-1}_{si, \R(p - 1)}(\overline{X}) : dS = 0, dT = \pi_{p-1}(S) \}}{\{d(\widetilde{S}, \widetilde{T})\}}, \]
where $d(\widetilde{S}, 
\widetilde{T}) = (d \widetilde{S}, d \widetilde{T} - \pi_{p - 1}(\widetilde{S}))$.
\end{theorem}

\begin{proof}
This is a consequence of  Theorem \ref{Propsicurrents}.
\end{proof}

In what follows, for $(S, T) \in F^p \Dist^n(\overline{X}) \oplus \mathscr{D}^{n-1}_{si, \R(p - 1)}(\overline{X})$ such that $dS=0$ and $dT=\pi_{p-1}(S)$, we will denote by $[(S,T)] \in H^n_{\mathcal{D}}(X, \R(p))$ the cohomology class of the pair $(S,T)$.

\begin{proposition} \label{compatibility} Let $x \in H^n_\mathcal{D}(X, \R(n))$ be a Deligne--Beilinson cohomology class which is represented, via the isomorphism of Proposition \ref{DBcoh-diff-form}, by a pair $(\phi, \phi')$ of smooth slowly increasing differential forms. Then via the isomorphism of Theorem \ref{TheoremAHCcurrents}, the class $x$ is represented by the pair of currents $(T_\phi, T_{\phi'})$.
\end{proposition}

\begin{proof}
This is immediate from Proposition \ref{DBcoh-diff-form} and the fact that the quasi-isomorphisms of Theorem \ref{Propsicurrents} is given by the natural inclusion of slowly increasing differential forms into tempered currents.
\end{proof}

The previous result allows us to decribe explicitly the Gysin morphism in $DB$-cohomology as follows. Let $\iota: X' \hookrightarrow X$ be a closed embedding of pure codimension $c$. Let $\overline{X'}$ denote a smooth compactification of $X'$ such that $D'=\overline{X'}-X'$ is a simple normal crossing divisor. Assume further that $\iota$ extends to a morphism $\overline{X'} \rightarrow \overline{X}$ that we still denote by $\iota$ and such that $\iota^{-1}(D) = D'$. Then we define the Gysin morphism
\begin{equation} \label{push}
\iota_*: H^n_{\mathcal{D}}(X', \R(p)) \rightarrow H^{n+2c}_{\mathcal{D}}(X, \R(p+c)).
\end{equation}
by $\iota_*[(S,T)]=[(\iota_*S, \iota_*T)]$. Here for a tempered current $T$, we denote by $\iota_*T$ the tempered current defined by $\omega \mapsto (\iota_*T)(\omega)=T(\iota^*\omega)$. This makes sense because the pullback of a rapidly decreasing differential form by a closed embedding is still rapidly decreasing.

We conclude with the construction of a linear form on $DB$-cohomology associated with certain rapidly decreasing differential forms.

\begin{proposition} \label{temperedpairing}
Let $n \in \N, p \in \Z$ and let $\omega \in \mathscr{A}^{2d-n}_{rd}(\overline{X})$ be a smooth closed rapidly decreasing differential form of Hodge type components inside $\{ (a, b) : a, b > d - p \}$. Then the assignment $(S, T) \mapsto T(\omega)$ induces a map
\[ \langle -, \omega \rangle : H^{n+1}_\mathcal{D}(X, \R(p)) \to \C. \]
\end{proposition}

\begin{proof}
By Theorem \ref{TheoremAHCcurrents} we have
\[ H^{n+1}_{\mathcal{D}}(X, \R(p)) = \{ (S, T) \in F^p \Dist^{n+1}(\overline{X}) \oplus \mathscr{D}^n_{si, \R(p-1)}(\overline{X}) \} / \sim. \]
In order to show that the linear form $(S, T) \mapsto T(\omega)$ is well defined at the level of cohomology, we need to see that it vanishes at any coboundary. Let $(\tilde{S}, \tilde{T}) \in F^p \Dist^{n}(\overline{X}) \oplus \mathscr{D}^{n-1}_{si, \R(p-1)}(\overline{X})$. We have $d(\tilde{S}, \tilde{T}) = (d\tilde{S}, d \tilde{T} - \pi_{p-1}(\tilde{S}))$ and we need to check that $(d \tilde{T} - \pi_{p-1}(\tilde{S})) (\omega) = 0$. We have
\[ d \tilde{T}(\omega) = -\tilde{T}(d \omega) = 0 \]
since $\omega$ is closed. Moreover, $\tilde{S} \in F^p \Dist^{n+1}(\overline{X})$, which implies that $\pi_{p-1}(\tilde{S})$ vanishes on forms of type $(a, b)$ with $a, b > d - p$, as $\tilde{S}$ vanishes on forms of type $(a, b)$ with $a > d - p$ and its complex conjugate vanishes on forms of type $(a,b)$ with $b > d - p$. This finishes the proof.
\end{proof}

\section{Classes in motivic cohomology}

This section is devoted to the construction of the classes in the motivic cohomology groups of Siegel sixfolds.

\subsection{Groups} \label{section-groups}

Let $\GSp_{2n}$ be the group scheme over $\Z$ whose $R$-points, for any commutative ring $R$ with identity, are described by
\[ \GSp_{2n}(R) = \{ A \in \GL_{2n}(R) \; | \; {}^t A J_n A = \nu(A) J_n, \; \nu(A) \in \mathbf{G}_m(R) \}, \] where $J_n$ is the matrix ${ \matrix 0 {I_n} {-I_n} 0}$, for $I_n$ denoting the $n \times n$ identity matrix. We will denote $\G:=\GSp_6$.

\subsubsection{Subgroups}

Let $F$ be a totally real \'etale quadratic $\Q$-algebra. Denote by $\GL_{2,F}^* /\Q$ the subgroup scheme of ${\rm Res}_{F/\Q} \GL_{2,F}$ sitting in the Cartesian diagram 
\[ \xymatrix{ 
\GL_{2,F}^* \ar@{^{(}->}[r] \ar[d] & {\rm Res}_{F/\Q} \GL_{2,F} \ar[d]^{\det} \\ 
\mathbf{G}_m  \ar@{^{(}->}[r] & {\rm Res}_{F/\Q} \mathbf{G}_{{\rm m},F}.
}
\]
For instance, when $F=\Q \times \Q$, we have
\[\GL_{2,F}^*=\{ (g_1, g_2) \in \GL_{2} \times \GL_2 \; | \; {\rm det}(g_1) = {\rm det}(g_2) \}.\]

Consider $F^{2}$ with its standard $F$-alternating form $\langle \; ,\; \rangle_F$. We fix the standard symplectic $F$-basis $\{ e_1 , f_1 \}$ and define $\langle \; ,\; \rangle_\Q$ to be ${\rm Tr}_{F/\Q} \circ \langle \; ,\; \rangle_F$. Then, by definition $\GL_{2,F}^* \subset \GSp(\langle \; ,\; \rangle_\Q)$. The extensions $F / \Q$ are parametrized by $a \in \Q^\times_{> 0} / ( \Q^\times _{> 0})^2,$ and we identify $F = \Q \oplus \Q \sqrt{a}$, for a representative $a$ of the corresponding class in $\Q^\times_{> 0} / ( \Q^\times_{> 0} )^2$. Fixing the $\Q$-basis of $F^2$ given by 
\[\{ \tfrac{1}{2\sqrt{a}}e_1 , \tfrac{1}{2}e_1 , \sqrt{a}f_1,f_1 \} \] gives an isomorphism $\GSp(\langle \; ,\; \rangle_\Q) \simeq \GSp_{4}$. Indeed, such a basis represents the alternating form $\langle \; ,\; \rangle_\Q$ as given by $J_2$. Thus we have an embedding \begin{eqnarray}\label{subemb} \GL_{2,F}^* \hookrightarrow \GSp(\langle \; ,\; \rangle_\Q) \simeq \GSp_{4}. \end{eqnarray}

Let $V_3$ be the standard representation of $\G$ with symplectic basis $\{ e_1, e_2, e_3, f_1, f_2, f_3  \}$. Define \[\GL_{2} \boxtimes \GSp_{4} := \{ (g_1,g_2) \in \GL_{2} \times \GSp_{4}\,:\, {\rm det}(g_1) = \nu(g_2)\}\] and consider the embedding \begin{eqnarray}\label{block} \GL_{2} \boxtimes \GSp_{4} \hookrightarrow \G \end{eqnarray} induced by the decomposition $V_3 = \langle e_1, f_1 \rangle \oplus \langle e_2, e_3, f_2, f_3 \rangle$.

By composing the maps of \eqref{subemb} and \eqref{block}, we construct the embedding
\[ \iota: \H := \GL_{2} \boxtimes \GL_{2, F}^* \hookrightarrow \G  \]

\subsection{Shimura varieties}

Keep the notation of the previous section and denote by $\mathbf{S}={\rm Res}_{\C/\R}{\mathbf{G}_m}_{/\C}$ the Deligne torus. Denote by $X_{\H}$ the $\H(\R)$-conjugacy class of \[h:\mathbf{S} \longrightarrow \H_{/\R}, \quad x+iy \mapsto \left( { \matrix {x} {y } {-y} {x} }, { \matrix {x} {y } {-y} {x} }, { \matrix {x} {y } {-y} {x} }  \right). \] The pair $(\H,X_{\H})$ defines a Shimura datum whose reflex field is $\Q$.
Denote by $\Sh_{\H}$ the corresponding Shimura variety of dimension $ 3$. If $U \subseteq \H(\Af)$ is a fiber product (over the similitude characters) $U_1 \times_{\Af^\times} U_2 $ of sufficiently small subgroups, we have
\[ \Sh_{\H}(U) = \Sh_{\GL_2}(U_1) \times_{\mathbf{G}_m} \Sh_{\GL_{2,F}^*}(U_2), \]
where $\times_{\mathbf{G}_m}$ denotes the fiber product over the zero dimensional Shimura variety $\pi_0(\Sh_{\GL_2})(D)$ of level $D = {\rm det}(U_1)  = \det(U_2)$. Recall that $\pi_0(\Sh_{\GL_2})(D)$ is a zero dimensional scheme defined over $\Q$ whose complex points are 
\[\pi_0(\Sh_{\GL_2})(D)(\C) \simeq \hat{\Z}^\times/D.\]
We also recall the reader that the complex points of $\Sh_{\H}(U)$ are given by
\[ \Sh_{\H}(U)(\C) = \H(\Q) \backslash \H(\A) / \Z_{\H}(\R) K_{\H, \infty} U, \]
where $\Z_{\H}$ denotes the center of $\H$ and $K_{\H, \infty} \subseteq \H(\R)$ is the maximal compact defined as the product $\mathrm{U}(1) \times \mathrm{U}(1) \times \mathrm{U}(1)$.

Notice that the embedding $\iota : \H \to \G$ induces another Shimura datum $(\G, X_{\G})$ of reflex field $\Q$. 
For any neat open compact subgroup $U$ of $\G(\Af)$, denote by $\Sh_{\G}(U)$ the associated Shimura variety of dimension $6$.
We also write $\iota: \Sh_{\H}(U \cap \H(\A_f)) \hookrightarrow \Sh_{\G}(U)$ the morphism of Shimura varieties induced by the group homomorphism $\iota: \H \hookrightarrow \G$. When $U$ satisfies the hypotheses of \cite[Lemma 2.1]{GSp6paper1} (e.g. if $U$ is contained in the kernel of reduction $\G(\widehat{\Z}) \rightarrow \G(\Z/d\Z)$ modulo $d$ for $d\geq 3$) then $\iota$ is a closed embedding of codimension $3 $.

\subsection{Motivic cohomology classes for \texorpdfstring{$\GSp_{6}$}{GSp6}}\label{Constructiongeneral}

We now define the cohomology classes we want to study in this article.

\subsubsection{Modular units and Eisenstein series}\label{sectionmodularunits}
The input of our construction are the modular units already considered by Beilinson and Kato, which are related to real analytic Eisenstein series by the second Kronecker limit formula.\\

Let $\mathbf{T}_2$ denote the diagonal maximal torus of $\GL_2$ and let $\mathbf{B}_2$ denote the standard Borel. Define the algebraic character $\lambda: \mathbf{T}_2 \rightarrow \mathbf{G}_m$ by $\lambda(\mathrm{diag}(t_1,t_2))=t_1/t_2$.
Let $\Sc(\A^2, \C)$, resp. $\Sc(\A^2_f, \overline{\Q})$, denote the space of Schwartz-Bruhat functions on $\A^2$, resp. the space of $\overline{\Q}$-valued Schwartz-Bruhat functions on $\A_f^2$. Given $\Phi \in \Sc(\A^2, \C)$, denote by \[f(g,\Phi,s):= |\operatorname{det}(g)|^s \int_{\GL_1(\A)} \Phi((0,t)g)|t|^{2s}d^\times t  \]
the normalized Siegel section in $\operatorname{Ind}_{\B_2(\A)}^{\GL_2( \A)}(|\lambda|^s)$ and define the associated Eisenstein series \begin{eqnarray}\label{eisensteinseriesgl2} E(g,\Phi,s) := \sum_{\gamma \in \mathbf{B}_2(\Q) \backslash \GL_2(\Q)} f(\gamma g,\Phi ,s).  \end{eqnarray}
Fix the Schwartz-Bruhat function $\Phi_\infty$ on $\R^2$ defined by $(x,y) \mapsto e^{- \pi(x^2 + y^2)}$ and, for each $\overline{\Q}$-valued function $\Phi_f \in \Sc(\Af^2,\overline{\Q})$, the smallest positive integer $N_{\Phi_f}$ such that $\Phi_f$ is constant modulo $N_{\Phi_f}\widehat{\Z}^2$. Finally, denote $ \Sc_0(\Af^2,\overline{\Q})\subset \Sc(\Af^2,\overline{\Q})$ the space of elements $\Phi_f$ such that $\Phi_f((0,0))=0$. We now state the following (classical) result, which relates modular units to values of the adelic Eisenstein series defined in \eqref{eisensteinseriesgl2}.
 
\begin{proposition} \label{KLF} Let $\Phi_f \in \Sc_0(\Af^2,\overline{\Q})$ with $N_{\Phi_f} \geq 3$, then there exists $$
u(\Phi_f) \in \mathcal{O}(\Sh_{\GL_2}(K(N_{\Phi_f})))^\times \otimes \overline{\Q}$$ such that for any $g \in \GL_2(\A)$ we have
\[ E(g,\Phi,s)={\rm log}|u(\Phi_f)(g)| + O(s) \;\;\; \text{ as $s \to 0$}, \] where $\Phi=\Phi_{\infty} \otimes \Phi_f$.
\end{proposition}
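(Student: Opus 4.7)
The plan is to reduce to the classical second Kronecker limit formula via the standard adelic--classical dictionary. The argument has three main steps.

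\textbf{Step 1: Reduction by linearity.} The construction $\Phi_f \mapsto E(g,\Phi,s)$ is linear in $\Phi_f$, and the space $\Sc_0(\Af^2, \overline{\Q})$ is spanned over $\overline{\Q}$ by characteristic functions $\Phi_f = \mathrm{ch}((a,b) + N\widehat{\Z}^2)$ with $N \geq 3$ and $(a,b) \not\equiv (0,0) \bmod N$. Hence it suffices to produce $u(\Phi_f)$ for such elementary $\Phi_f$, whose level divides $N = N_{\Phi_f}$; the general $u(\Phi_f)$ is then assembled as the corresponding $\overline{\Q}$-linear combination (inside $\mathcal{O}^\times \otimes \overline{\Q}$, additively).

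\textbf{Step 2: Classical translation.} Using strong approximation for $\SL_2$ together with the Iwasawa decomposition at $\infty$, any $g \in \GL_2(\A)$ can be written, up to left multiplication by $\GL_2(\Q)$ and right multiplication by $K(N)$, in the form $(g_\tau, \xi)$ where $g_\tau \in \GL_2(\R)^+$ maps $i$ to a point $\tau \in \mathcal{H}$ and $\xi \in \widehat{\Z}^\times$ labels the connected component of $\Sh_{\GL_2}(K(N))$. Unfolding the Bruhat cell and carrying out the archimedean integral against $\Phi_\infty(x,y) = e^{-\pi(x^2+y^2)}$ on one side, and the finite integral against $\mathrm{ch}((a,b) + N\widehat{\Z}^2)$ on the other, a direct computation shows that
\begin{equation*}
E(g,\Phi,s) \;=\; c(s)\, E^*_{a/N,\,b/N}(\tau,s),
\end{equation*}
where $c(s)$ is an explicit gamma factor with $c(0) = 1$ and
\begin{equation*}
E^*_{\alpha,\beta}(\tau,s) \;=\; {\sum_{(m,n) \equiv (\alpha,\beta) \bmod 1}}' \frac{y^s}{|m\tau+n|^{2s}}
\end{equation*}
is the classical Eisenstein series of level $N$ with characteristic $(\alpha,\beta) = (a/N, b/N)$.

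\textbf{Step 3: Kronecker's second limit formula.} For $(a,b) \not\equiv (0,0) \bmod N$ the Eisenstein series $E^*_{a/N,b/N}(\tau,s)$ is holomorphic at $s=0$, and Kronecker's second limit formula gives
\begin{equation*}
E^*_{a/N,\,b/N}(\tau,s) \;=\; -\log\bigl|g_{a,b}(\tau)\bigr| \;+\; O(s),
\end{equation*}
where $g_{a,b}$ is the Siegel unit of level $N$, an element of $\mathcal{O}(Y(N))^\times \otimes \overline{\Q}$ (the tensor with $\overline{\Q}$ absorbs roots of unity and auxiliary rational factors). Defining $u(\Phi_f)$ on the component labeled by $\xi$ as the appropriate Galois/Hecke translate of $g_{a,b}$, the equivariance of both sides under $\GL_2(\Q)$ on the left and $K(N)$ on the right propagates the identity from a single point $\tau$ to all of $\GL_2(\A)$, and patches the contributions of the various connected components into a single global section.

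The main technical obstacle lies in Step 2: one must carefully track the normalizing constants coming from the Tate integral $\int \Phi((0,t)g)|t|^{2s}d^\times t$, the archimedean Gaussian, and the passage between the adelic lattice sum and the classical Epstein-type sum with characteristic, and verify that after dividing by these factors the expansion at $s=0$ on both sides agrees without extra terms. Step 3 itself is then the classical Kronecker formula, whose Poisson-summation proof is standard (see e.g.\ Siegel's \emph{Advanced Analytic Number Theory} or Kato's treatment in the context of Beilinson elements).
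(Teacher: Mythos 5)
The paper's own proof of Proposition \ref{KLF} is a one-line citation to \cite[Corollary 5.6]{PollackShahU21} (with $\nu_1$ the trivial character), so there is no written argument in the paper to compare against step by step. Your sketch effectively unpacks what that citation delegates: the $\overline{\Q}$-linear reduction to characteristic functions of cosets $(a,b)+N\widehat{\Z}^2$, the strong-approximation/Iwasawa passage from the adelic Eisenstein series to the classical Eisenstein series with characteristic, and then Kronecker's second limit formula identifying the leading term at $s=0$ with $\log$ of a Siegel unit. This is indeed the standard route (it is essentially the one followed in the cited reference and in Kato's treatment), and the structure of your argument is sound. A couple of small points to be careful about if you write this out in full: the claim ``$c(0)=1$'' for the gamma factor is a normalization assertion that requires verification (the archimedean Tate integral contributes a $\pi^{-s}\Gamma(s)$, whose pole at $s=0$ must be compensated by a zero of the classical series, and the residual constant need not be $1$), but since $u(\Phi_f)$ only needs to live in $\mathcal{O}^\times\otimes\overline{\Q}$ any nonzero algebraic scalar can be absorbed into the unit, so this does not affect the statement. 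Likewise, one should be precise about which version of the Kronecker limit formula is invoked (exponential versus congruence characteristic, evaluation at $s=0$ versus $s=1$, the related factor of $2$), but these are bookkeeping rather than genuine gaps. In short: correct, and mathematically the same route as what the paper's citation points to; your proposal simply supplies the proof that the paper outsources.
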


\begin{proof}
To our knowledge, this adelic interpretation of Siegel units is due to Colmez \cite{ColmezBourbaki}. We refer the reader to the second statement of \cite[Corollary 5.6]{PollackShahU21}, where $\nu_1$ is taken to be the trivial character.
\end{proof}

\begin{example}

When $\Phi_f= {\rm char}((0,1)+N\widehat{\Z}^2)$ for $N \geq 4$, the corresponding $u(\Phi_f) \in \O(\Sh_{\GL_2}(K(N)))^\times \otimes \Q$ is given by $\prod_{b \in ( \Z/N\Z)^\times} g_{0, b /N}^{\varphi(N)}$, where $g_{0, \star/N}$ is the  Siegel unit as in \cite[\S 1.4]{Kato}. Indeed, $\Sh_{\GL_2}(K(N))$ is a disjoint union of connected components all isomorphic to the modular curve $Y(N)$, which are indexed by the class of $[{\rm det}(k)] \in \widehat{\Z}^\times/(1+N \widehat{\Z})$, for $k \in \GL_2(\widehat{\Z})$. Choose a system of representatives given by the elements $k_d=\left( \begin{smallmatrix} 1 & \\ & d \end{smallmatrix} \right)$, as $d$ varies in $\widehat{\Z}^\times/(1+N \widehat{\Z})$. Then, a point in $\Sh_{\GL_2}(K(N))(\C)$ is represented by a pair $(z,k_d)$, with $z \in Y(N)$. By \cite[Corollary 5.6]{PollackShahU21}, as function on $\Sh_{\GL_2}(K(N))(\C)$, \[u(\Phi_f)(z,k_d) = \prod_{b \in ( \Z/N\Z)^\times} g_{(0, b/N) \cdot k_d^{-1}}^{\varphi(N)}(z)=\prod_{b \in ( \Z/N\Z)^\times} g_{0, b r_d/N}^{\varphi(N)}(z), \]
where $r_d$ denotes the inverse of $d$ modulo $N$, $\varphi$ is Euler's totient function, and  $g_{0, \star/N}$ is the  Siegel unit as in \cite[\S 1.4]{Kato}. Thus, $u(\Phi_f)$ descends to an element of $\O(\Sh_{\GL_2}(K_1(N)))^\times \otimes \Q$, as each $g_{0, b/N}$ does.
\end{example}

\subsubsection{The construction}   Let
$$u: \mathcal{S}_0(\A_f^2, \overline{\Q}) \rightarrow H^1_\mathcal{M}(\Sh_{\GL_2}, \overline{\Q}(1)) \simeq \mathcal{O}(\Sh_{\GL_2})^\times \otimes_{\Z} \overline{\Q}$$ be the $\GL_2(\A_f)$-equivariant map defined by $\Phi_f \mapsto u(\Phi_f)$, where $H^1_\mathcal{M}(\Sh_{\GL_2}, \overline{\Q}(1))$ denotes $\underrightarrow{\lim}_{V_1} H^1_\mathcal{M}(\Sh_{\GL_2}(V_1), \overline{\Q}(1))$ and $\mathcal{O}(\Sh_{\GL_2})^\times \otimes_{\Z} \overline{\Q}$ denotes $\underrightarrow{\lim}_{V_1} (\mathcal{O}(\Sh_{\GL_2}(V_1))^\times \otimes_{\Z} \overline{\Q})$, the limits being taken over all neat compact open subgroups $V_1 \subset \GL_2(\A_f)$.
 
Let
$$V_1 \subset \GL_2(\A_f), V_2 \subset \GL_{2, F}^*(\A_f) $$
denote neat compact open subgroups such that the images of $V_1$ and $V_2$ by the similitude characters are the same. Taking the fiber products over the similitude character, we obtain a compact open subgroup $V = V_1 \times_{\A_f^\times} V_2$ of $ \H(\A_f)$. Let $U \subset \G(\A_f)$ be a neat compact open subgroup such that the embedding $\iota$ induces a closed embedding $\Sh_{\H}(V) \hookrightarrow \Sh_{\G}(U)$ of codimension $3$. As a consequence, we have an induced map on motivic cohomology 
\[ \iota_{*}: H^{1}_{\mathcal{M}} \big(\Sh_{\H}(V), \overline{\Q}(1) \big) \to H^{7}_{\mathcal{M}}\big( \Sh_{\G}(U), \overline{\Q}(4) \big). \] 
The projection on the first factor of $\Sh_{\H}(V)$ is a morphism $p_1: \Sh_{\H}(V) \rightarrow \Sh_{\GL_2}(V_1)$. Hence we have the sequence of morphisms
$$
\begin{CD}
\mathcal{S}_0(\A_f^2, \overline{\Q})^{V_1} @>u>> H^1_\mathcal{M}(\Sh_{\GL_2}(V_1), \overline{\Q}(1))\\
@>p_{1}^*>> H^1_{\mathcal{M}} \big(\Sh_{\H}(V), \overline{\Q}(1) \big)\\
@>\iota_{*}>> H^{7}_{\mathcal{M}}\big( \Sh_{\G}(U), \overline{\Q}(4) \big).
\end{CD}
$$

\begin{definition} \label{eis1}
We define
$
\mathrm{Eis}_{\mathcal{M}}: \mathcal{S}_0(\A_f^2, \overline{\Q})^{V_1} \rightarrow H^{7}_{\mathcal{M}}\big( \Sh_{\G}(U), \overline{\Q}(4) \big)
$
to be the composite of these morphisms.
\end{definition}

\begin{remark} The notation $\mathrm{Eis}_{\mathcal{M}}$ is slightly abusive as these morphisms depend also on $U$, $V$ and the data entering in the definition of $\iota$.
\end{remark}

\section{Cohomology of the Siegel sixfold} 

In this chapter we recollect some classical general facts that will be used later or that serve as motivation for our constructions. We also describe the construction of the differential form $\omega_\Psi$ that will be used in the statement of the results.

\subsection{Representation theory}

We set the notations for the representation theory background needed to describe the component at infinity of the automorphic representations under consideration.

\subsubsection{Cartan decomposition}\label{cartandec} The maximal compact subgroup $K_\infty$ of $\Sp_{6}(\R)$ is described as
\[ K_\infty = \{ {\matrix A B {-B} A} \; | \; A A^t + B B^t = 1, A B^t = B A^t \}. \]
It is isomorphic to $U(3)$ via the map ${\matrix A B {-B} A} \mapsto A + iB$ and its Lie algebra is
\[ \mathfrak{k} = \{ {\matrix A B {-B} A} \; | \; A = - A^t, B = B^t \}. \] Letting 
$$
\mathfrak{p}_{\C}^\pm=\left\{ {\matrix A {\pm i A} {\pm i A} {-A} } \in M(6,\C) \;| \; A = A^t \right\},
$$
one has a Cartan decomposition \[ \mathfrak{sp}_{6,\C}=\mathfrak{k}_{\C} \oplus \mathfrak{p}^+_{\C} \oplus \mathfrak{p}^-_{\C}. \]

\subsubsection{Root system} For $1 \leq j \leq 3$, let $D_j$ be the square matrix of size $3$ with entry $1$ at position $(j, j)$ and $0$ elsewhere. Define
\[ T_j = - i {\matrix 0 {D_j} {- D_j} 0}. \] Then $\mathfrak{h} = \oplus_{j} \R \cdot T_j$ is a compact Cartan subalgebra of $\mathfrak{sp}_{6,\C}$. We let $(e_j)_j$ denote the basis of $\mathfrak{h}^*_\C$ dual to $(T_j)_j$. A system of positive roots for $(\mathfrak{sp}_{6,\C}, \mathfrak{h}_\C)$ is then given by
\begin{eqnarray*}
2 e_j, &\;& 1 \leq j \leq 3, \\
e_j + e_k, &\;& 1 \leq j < k \leq 3, \\
e_j - e_k, &\;& 1 \leq j < k \leq 3.
\end{eqnarray*}
The simple roots are $e_1 - e_2, e_2 - e_3, 2 e_3$. We note that $\mathfrak{p}^+_{\C}$ is spanned by the root spaces corresponding to the positive roots of type $2 e_j$ and $e_j+e_k$. We denote $\Delta = \{ \pm 2 e_j, \pm (e_j \pm e_k) \}$ the set of all roots, $\Delta_\mathrm{c} = \{ \pm(e_j - e_k)\}$ the set of compact roots and $\Delta_{\rm nc} = \Delta - \Delta_{\rm c}$ the noncompact roots. Finally, we note $\Delta^+, \Delta_{\rm c}^+$ and $\Delta_{\rm nc}^+$ the set of positive, positive compact and positive noncompact roots, respectively.

The corresponding root vectors for each root space are given as follows:
\begin{itemize}
    \item  For $1 \leq j \leq 3$, the element $X_{\pm 2e_j}={ \matrix {D_j} {\pm i D_j} {\pm i D_j} {- D_j}}$ spans the root space of $\pm 2 e_j$.
    \item For $1 \leq j < k \leq 3$, letting $E_{jk}$ be the matrix with entry $1$ at positions $(j, k)$ and $(k, j)$ and zeroes elsewhere, the elements $X_{\pm(e_j + e_k)}={\matrix {E_{jk}} {\pm i E_{jk}} {\pm i E_{jk}} {- E_{jk}}}$ spans the root space for of $e_j + e_k$.
    \item Finally, for $1 \leq j < k \leq 3$, letting $F_{j, k}$ be the matrix with entry $1$ at position $(j, k)$, $-1$ at position $(k, j)$ and zeroes elsewhere, the element $X_{\pm(e_j - e_k)}={\matrix {\pm F_{jk}} {- iE_{jk}} {i E_{jk}} {\pm F_{jk}}}$ spans the root space of the compact root $\pm (e_j - e_k)$. 
\end{itemize}

\subsubsection{Weyl groups} \label{weylgroups} Recall that the Weyl group of $\Sp_6$ is given by $W_{\Sp_6} = \{ \pm 1\}^3 \rtimes \mathfrak{S}_3$. The reflection $\sigma_j$ in the orthogonal hyperplane of $2 e_j$ simply reverses the sign of $e_j$ while leaving the other $e_k$ fixed. The reflection $\sigma_{jk}$ in the orthogonal hyperplane of $e_j - e_k$ exchanges $e_j$ and $e_k$ and leaves the remaining $e_\ell$ fixed. The Weyl group $W_{K_\infty}$ of $K_\infty \cong U(3)$ is isomorphic to $\mathfrak{S}_3$ and, via the embedding into $\G$, identifies with the subgroup of $W_\G$ generated by the $\sigma_{jk}$. With the identification $W_{\Sp_6} = N(T) / Z(T)$, an explicit description of $W_{\Sp_6}$ and $W_{K_\infty}$ is given as follows. The matrices corresponding to the reflections $\sigma_{jk}$ are
\[ {\matrix {S_{jk}} 0 0 {-S_{jk}} }, \]
where $S_{jk}$ is the matrix with entry $1$ at places $(\ell, \ell)$, $\ell \neq j, k$, $(k, j)$ and $(j, k)$ and zeroes elsewhere. The matrices corresponding to the reflection $\sigma_j$ in the hyperplane orthogonal to $2 e_j$ are of the form
\[ {\matrix 0 {T_j} {-T_j} 0}, \]
where $T_j$ denotes the diagonal matrix with $-1$ at the place $(j, j)$ and ones at the other entries of the diagonal.

\subsubsection{$K_\infty$-types}\label{kinftypes}

We previously defined the maximal compact subgroup $K_{\infty} \simeq U(3)$ of $\Sp_{6}(\R)$, with Lie algebra $\mathfrak{k}$, and we considered the Cartan decomposition $\mathfrak{sp}_{6,\C}=\mathfrak{k}_{\C} \oplus \mathfrak{p}^+_{\C} \oplus \mathfrak{p}^-_{\C}.$ Recall that $\mathfrak{p}^{\pm}_{\C}=\bigoplus_{\alpha \in \Delta_{\rm nc}^+} \C X_{\pm \alpha} $.

Denote by $(k_2, k_2 ,k_3) = k_1 e_1+ k_2 e_2 + k_3 e_3$, with $k_i\in \Z$ the integral weights. Integral weights are dominant for our choice of $\Delta_{\rm c}^+$ if $k_1 \geq k_2 \geq k_3$. 
Recall that there is a bijection between isomorphism classes of finite dimensional irreducible complex representations of $K_{\infty}$ and dominant integral weights, given by assigning to the representation $\tau_{(k_1, k_2, k_3)}$ its highest weight $(k_1, k_2, k_3)$. 

\subsection{Lie algebra cohomology}

Let $A_\G = \R^\times_+$ denote the identity component of the center of $\G(\R)$ and let $K_\G =A_\G K_\infty \subset \G(\R)$. The embedding $\mathfrak{sp}_{6,\C}\subset \mathfrak{g}_\C$ induces an isomorphism $\mathfrak{sp}_{6,\C}/ \mathfrak{k} \simeq  \mathfrak{g}_\C / ({\rm Lie}(K_\G))_\C.$ By \cite[II. Proposition 3.1]{BorelWallach}, for any discrete series $\pi_\infty$ associated with the trivial representation (cf. \S \ref{discreteseries} below), we have
\[ H^6(\mathfrak{g}, K_\G; \pi_\infty) = {\rm Hom}_{K_\infty}(\bigwedge^6 \mathfrak{sp}_{6,\C}/ \mathfrak{k}, \pi_\infty). \]
By using the Cartan decomposition above, we get \[\bigwedge^6 \mathfrak{sp}_{6,\C}/ \mathfrak{k} = \bigoplus_{p+q=6} \bigwedge^p \mathfrak{p}^+_{\C} \otimes_\C  \bigwedge^q \mathfrak{p}^-_{\C}.\] 

One can easily decompose each term of the sum above in its irreducible constituents (if treated as a $K_\infty$-representation via the adjoint action). This will be helpful for writing explicit elements in $H^6(\mathfrak{g}, K_\G ; \pi_\infty)$ according to the minimal $K_\infty$-type of $\pi_\infty$. Indeed, $\mathfrak{p}_\C^+$ (resp. $\mathfrak{p}_\C^-$) is the six dimensional irreducible representation of $K_\infty$ of weight $(2,0,0)$ (resp. $(0,0,-2)$). Using Sage package for Lie groups, one can see the following.

\begin{lemma} \label{Ktypesex}
We have the following decompositions in irreducible components of the following $K_\infty$-representations.
\begin{align*}
    \bigwedge^6 \mathfrak{p}^+_{\C} &= \tau_{(4,4,4)} \\
    \bigwedge^5 \mathfrak{p}^+_{\C} \otimes_\C \mathfrak{p}^-_{\C} &= \tau_{(4,2,2)} \oplus \tau_{(4,3,1)} \oplus \tau_{(4,4,0)} \\
    \bigwedge^4 \mathfrak{p}^+_{\C} \otimes_\C \bigwedge^2 \mathfrak{p}^-_{\C} &= \tau_{(2,1,1)} \oplus \tau_{(2,2,0)} \oplus 2  \tau_{(3,1,0)} \oplus 2   \tau_{(3,2,-1)} \oplus \tau_{(3,3,-2)}  \oplus \tau_{(4,0,0)}\oplus\\
    & \oplus \tau_{(4,1,-1)}\oplus \tau_{(4,2,-2)} \\ 
    \bigwedge^3 \mathfrak{p}^+_{\C} \otimes_\C \bigwedge^3 \mathfrak{p}^-_{\C} &= 2  \tau_{(0,0,0)} \oplus \tau_{(1,1,-2)} \oplus 2 \tau_{(1,0,-1)} \oplus \tau_{(2,-1,-1)} \oplus \tau_{(2,1,-3)} \oplus \tau_{(2,2,-4)}  \oplus \\
    & \oplus 4  \tau_{(2,0,-2)} \oplus \tau_{(3,-1,-2)} \oplus 2  \tau_{(3,0,-3)} \oplus \tau_{(4,-2,-2)}  \\
    \bigwedge^2 \mathfrak{p}^+_{\C} \otimes_\C \bigwedge^4 \mathfrak{p}^-_{\C} &= \tau_{(-1,-1,-2)} \oplus 2   \tau_{(1,-2,-3)} \oplus  \tau_{(1,-1,-4)} \oplus \tau_{(2,-3,-3)} \oplus \tau_{(2,-2,-4)} \oplus \\
    & \oplus \tau_{(0,-2,-2)}  \oplus 2  \tau_{(0,-1,-3)} \oplus \tau_{(0,0,-4)} \\ 
    \mathfrak{p}^+_{\C} \otimes_\C \bigwedge^5 \mathfrak{p}^-_{\C} &= \tau_{(-2,-2,-4)} \oplus \tau_{(-1,-3,-4)} \oplus \tau_{(0,-4,-4)} \\ 
     \bigwedge^6 \mathfrak{p}^-_{\C} &= \tau_{(-4,-4,-4)}.
    \end{align*}
\end{lemma}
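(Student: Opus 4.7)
The proof is entirely representation-theoretic in the character ring of $K_\infty \cong U(3)$. Since the highest weights $(2,0,0)$ and $(0,0,-2)$ of $\mathfrak{p}^+_\C$ and $\mathfrak{p}^-_\C$ have already been identified in the paragraph preceding the lemma, we have $\mathfrak{p}^+_\C \cong \Sym^2(V)$ and $\mathfrak{p}^-_\C \cong \Sym^2(V)^*$, where $V$ is the standard representation of $U(3)$. Hence every representation in the statement is a finite-dimensional rational representation of $U(3)$, and the decomposition reduces to a finite character calculation.

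My plan is to compute characters and then peel off Schur polynomials. The character of $\bigwedge^p \mathfrak{p}^+_\C$ is the coefficient of $t^p$ in the generating function
\[ \prod_{1 \leq i \leq j \leq 3} (1 + t\, x_i x_j), \]
a Laurent polynomial in $x_1, x_2, x_3$, and the character of $\bigwedge^q \mathfrak{p}^-_\C$ is obtained from it by the substitution $x_i \mapsto x_i^{-1}$. Multiplying the two yields the character of $\bigwedge^p \mathfrak{p}^+_\C \otimes \bigwedge^q \mathfrak{p}^-_\C$. One then writes the resulting Laurent polynomial as a nonnegative integer combination of Schur polynomials $s_{(k_1,k_2,k_3)}(x_1,x_2,x_3)$ by iteratively subtracting $s_\lambda$ for the highest dominant weight $\lambda$ still appearing; the surviving multiplicities give the claimed decomposition. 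Alternatively, one can use Littlewood's plethysm formula to decompose each $\bigwedge^p \Sym^2(V)$ intrinsically into Schur functors, and then apply the Littlewood--Richardson rule to handle the tensor product with $\bigwedge^q \Sym^2(V)^*$.

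The main obstacle is purely combinatorial bookkeeping: the representations are large (for example $\bigwedge^3 \mathfrak{p}^+_\C \otimes \bigwedge^3 \mathfrak{p}^-_\C$ has dimension $400$) and admit many irreducible constituents, which is precisely why the authors invoke a computer algebra package rather than reproduce the computation by hand. Two useful sanity checks run in parallel with the calculation. First, the duality
\[ \bigwedge^p \mathfrak{p}^+_\C \otimes \bigwedge^q \mathfrak{p}^-_\C \cong \bigl(\bigwedge^{6-p} \mathfrak{p}^+_\C \otimes \bigwedge^{6-q} \mathfrak{p}^-_\C\bigr)^* \]
forces the symmetry $(k_1,k_2,k_3) \mapsto (-k_3,-k_2,-k_1)$ visible when pairing the lines of complementary bidegree in the lemma. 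Second, total dimensions on each side can be compared via the Weyl dimension formula $\dim \tau_{(k_1,k_2,k_3)} = \tfrac{1}{2}(k_1-k_2+1)(k_2-k_3+1)(k_1-k_3+2)$, providing a robust check that no irreducible summand has been missed.
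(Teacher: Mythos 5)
Your proposal is correct and is, at the level of ideas, the same computation the paper relies on: the authors simply invoke Sage's Lie groups package to produce the branching, while you spell out a concrete manual route (character generating functions for the exterior powers of $\Sym^2 V$ and $\Sym^2 V^*$, peeling off Schur polynomials, or equivalently Littlewood plethysm followed by Littlewood--Richardson). The identification $\mathfrak{p}^\pm_\C \cong \Sym^2 V^{\pm 1}$ from the highest weights $(2,0,0)$, $(0,0,-2)$ is exactly right. Your two sanity checks are genuine value added over the paper's one-line justification: the duality $\tau_{(k_1,k_2,k_3)}^* \cong \tau_{(-k_3,-k_2,-k_1)}$ does match the complementary lines (and forces the middle line to be self-dual, which it visibly is), and the dimension count via the $U(3)$ Weyl formula checks out on every line, e.g.\ $2+10+16+10+35+28+108+35+128+28=400=\binom{6}{3}^2$ for $\bigwedge^3\mathfrak{p}^+_\C\otimes\bigwedge^3\mathfrak{p}^-_\C$. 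In short, you replace the paper's appeal to software with an explicit, verifiable hand calculation plus cross-checks; both routes prove the same statement, and yours is the more self-contained.
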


It will be useful to have some explicit description of the components $\tau_{(2,2,-4)}$ and $\tau_{(4, -2, -2)}$ of $\bigwedge^3 \mathfrak{p}^+_{\C} \otimes_\C \bigwedge^3 \mathfrak{p}^-_{\C}$.

\begin{lemma} \label{LemmaHighestWeightVector}
The vector
\[ X_{(2,2,-4)} := (X_{2e_1} \wedge X_{2e_2} \wedge X_{e_1 + e_2}) \otimes (X_{-e_1 - e_3} \wedge X_{-e_2 - e_3} \wedge X_{-2e_3}) \]
is a highest weight vector of $\tau_{(2, 2, -4)}$. Analogously, a highest weight vector of $\tau_{(4, -2, -2)} \subseteq \tau_{(4,1, 1)} \otimes \tau_{(0, -3, -3)}$ is given by
\[ X_{(4, -2, -2)} := (X_{2 e_1} \wedge X_{e_1 + e_2} \wedge X_{e_1 + e_3}) \otimes (X_{-e_2 - e_3} \wedge X_{-2e_2} \wedge X_{- 2e_3}). \]
\end{lemma}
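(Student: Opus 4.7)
The plan is to verify the two defining properties of a highest weight vector for the Cartan subalgebra $\mathfrak{h}_\C \subseteq \mathfrak{k}_\C$ and the system $\Delta_{\rm c}^+$ of positive compact roots. Since the simple compact positive roots are $e_1-e_2$ and $e_2-e_3$, it suffices to check, for each candidate vector, that (i) it lies in the claimed weight space under $\mathfrak{h}_\C$, and (ii) it is annihilated by $\mathrm{ad}(X_{e_1-e_2})$ and $\mathrm{ad}(X_{e_2-e_3})$. Since both $(2,2,-4)$ and $(4,-2,-2)$ are dominant for $\Delta_{\rm c}^+$, the subrepresentation generated will then be (up to scalar) the unique copy of $\tau_{(2,2,-4)}$, resp.\ $\tau_{(4,-2,-2)}$, in the decomposition given in Lemma \ref{Ktypesex}. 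Nonvanishing of the vectors is evident because each one is a wedge of three \emph{distinct} root vectors tensored with another such wedge.

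The weight check is a direct sum. For $X_{(2,2,-4)}$, the first tensor factor carries weight $2e_1+2e_2+(e_1+e_2)=3e_1+3e_2$ while the second carries $(-e_1-e_3)+(-e_2-e_3)+(-2e_3)=-e_1-e_2-4e_3$; adding gives $(2,2,-4)$. For $X_{(4,-2,-2)}$, the first factor gives $2e_1+(e_1+e_2)+(e_1+e_3)=(4,1,1)$ and the second gives $(-e_2-e_3)+(-2e_2)+(-2e_3)=(0,-3,-3)$, which sum to $(4,-2,-2)$.

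For the annihilation step, $\mathrm{ad}(X_\alpha)$ acts as a derivation on wedge and tensor products, and for roots $\alpha,\beta$ of $\mathfrak{sp}_{6,\C}$ the bracket $[X_\alpha,X_\beta]$ is a nonzero scalar multiple of $X_{\alpha+\beta}$ when $\alpha+\beta\in\Delta$, and vanishes whenever $\alpha+\beta\notin\Delta\cup\{0\}$. For each of the four pairs (generator, tensor factor), I enumerate those $\beta$ appearing in the factor for which $\alpha+\beta\in\Delta$ and verify that the resulting term has a repeated entry in one of the wedges, hence vanishes. For example, applying $\mathrm{ad}(X_{e_1-e_2})$ to $X_{2e_1}\wedge X_{2e_2}\wedge X_{e_1+e_2}$ produces nonzero brackets only from $\beta=2e_2$ (yielding $X_{e_1+e_2}$, duplicating the third factor) and $\beta=e_1+e_2$ (yielding $X_{2e_1}$, duplicating the first factor). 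Acting on $X_{-e_1-e_3}\wedge X_{-e_2-e_3}\wedge X_{-2e_3}$, the only relevant $\beta$ is $-e_1-e_3$, producing $X_{-e_2-e_3}$, which duplicates the second factor. The three remaining verifications, namely $\mathrm{ad}(X_{e_2-e_3})$ on the two factors of $X_{(2,2,-4)}$, and $\mathrm{ad}(X_{e_1-e_2})$, $\mathrm{ad}(X_{e_2-e_3})$ on each factor of $X_{(4,-2,-2)}$, are entirely analogous.

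The only real obstacle is careful bookkeeping: one must repeatedly consult the root system $\Delta=\{\pm 2e_j,\pm(e_j\pm e_k)\}$ to decide which shifted sums land back in $\Delta$. There is no conceptual subtlety, and in every case the surviving contributions vanish by antisymmetry of the wedge. An optional sanity check is that, as a consequence of Lemma \ref{Ktypesex}, each target isotypic component appears with multiplicity one, so there is no ambiguity in saying ``the'' highest weight vector up to scalar.
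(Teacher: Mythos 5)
Your proof is correct, but it takes a genuinely different route from the paper's. The paper first decomposes $\bigwedge^3 \mathfrak{p}^+_\C = \tau_{(3,3,0)} \oplus \tau_{(4,1,1)}$ and $\bigwedge^3 \mathfrak{p}^-_\C = \tau_{(-1,-1,-4)} \oplus \tau_{(0,-3,-3)}$, observes that each summand has multiplicity-free weights so that each wedge factor (having extreme weight $(3,3,0)$ or $(-1,-1,-4)$, etc.) is forced to be a highest weight vector of the corresponding irreducible, and then invokes the standard fact that the tensor of two highest weight vectors is a highest weight vector of the Cartan component $\tau_{(3,3,0)+(-1,-1,-4)} = \tau_{(2,2,-4)}$. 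You instead perform a direct verification: compute the weight and check annihilation by the raising operators $\mathrm{ad}(X_{e_1-e_2})$ and $\mathrm{ad}(X_{e_2-e_3})$, using that a bracket $[X_\alpha, X_\beta]$ either vanishes or lands on a root that duplicates an entry of the wedge. Your approach is more elementary and self-contained; it needs only the root system and antisymmetry, and does not require the intermediate decompositions of $\bigwedge^3 \mathfrak{p}^\pm_\C$. The paper's approach is shorter once those decompositions are in hand and, as a bonus, records exactly which irreducible summand of $\bigwedge^3 \mathfrak{p}^+_\C \otimes \bigwedge^3 \mathfrak{p}^-_\C$ contains the highest weight vector; but your computation verifies the same conclusion. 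The paper also notes that $X_{(4,-2,-2)}$ can be obtained from $X_{(2,2,-4)}$ by the complex-conjugation action, which shortcuts the second half; your redoing of the check for $X_{(4,-2,-2)}$ is a harmless extra step.
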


\begin{proof}
We have a decomposition of $K_\infty$ representations $\bigwedge^3 \mathfrak{p}^+_{\C} = \tau_{(3,3,0)} \oplus \tau_{(4,1,1)}$,  $\bigwedge^3 \mathfrak{p}^-_{\C} = \tau_{(-1, -1, -4)} \oplus \tau_{(0, -3, -3)}$. Since each of the four summands have multiplicity-free weights (i.e. every weight space has dimension at most one), then one can easily check that the vector
\[ X_{(2,2,-4)} := (X_{2e_1} \wedge X_{2e_2} \wedge X_{e_1 + e_2}) \otimes (X_{-e_1 - e_3} \wedge X_{-e_2 - e_3} \wedge X_{-2e_3}) \]
is a highest weight vector of $\tau_{(2, 2, -4)} \subseteq \tau_{(3,3,0)} \otimes \tau_{(-1, -1, -4)}$. Indeed $\tau_{(2,2,-4)}$ is the Cartan component of the tensor product and each of the terms in the tensor product defining $X_{(2,2,-4)}$ is a highest weight vector of its corresponding representation. Analogously, one shows that $X_{(4, -2, -2)}$ is a highest weight vector of $\tau_{(4, -2, -2)} \subseteq \tau_{(4,1, 1)} \otimes \tau_{(0, -3, -3)}$. Observe finally that we can pass from $\tau_{(2,2,-4)}$ to $\tau_{(4, -2, -2)}$ by the action of the matrix inducing  complex conjugation.
\end{proof}

\subsection{Discrete series \emph{L}-packets and test vectors} \label{discreteseries}

%It is well known that the trivial representation of $\Sp_{2n}(\R)$ has Harish-Chandra parameter $\rho$, the half-sum of positive roots.
We recall some standard facts on discrete series. For any nonsingular weight $\Lambda \in \Delta$, define
\[ \Delta^+(\Lambda) := \{ \alpha \in \Delta \; : \; \langle \alpha, \Lambda \rangle > 0 \}, \;\;\; \Delta^+_c(\Lambda) = \Delta^+(\Lambda) \cap \Delta_c, \]
where $\langle \;,\;\rangle$ is the standard scalar product on $\R^3$. Let $\lambda$ be a dominant weight for $\Sp_{6}$ (with respect to the complexification $\mathfrak{h}_\C$ of the compact Cartan algebra $\mathfrak{h}$), let $\rho = \frac{1}{2} \sum_{\alpha \in \Delta^+} \alpha = (3, 2, 1),$ and let $\chi_\lambda : Z(U(\mathfrak{g})) \to \C$ be the map obtained by composing the Harish-Chandra isomorphism $Z(U(\mathfrak{g})) \cong U(\mathfrak{h})^{W_{\mathrm{Sp}_6}}$ with $\lambda : U(\mathfrak{h}) \to \C$. As $| W_{\Sp_6} / W_{K_\infty}| = 8$, the set of equivalence classes of irreducible discrete series representations of $\Sp_6(\R)$ with infinitesimal character $\chi_\lambda$ contains $8$ elements. More precisely (cf. \cite[Theorem 9.20]{knapp}), let us choose representatives $\{w_1, \ldots, w_{8}\}$ of $W_{\Sp_6} / W_{K_\infty}$ of increasing length and such that for any $1 \leq i \leq 8$, the weight $w_i (\lambda + \rho)$ is dominant for $K_\infty$. Then, for any $1 \leq i \leq 8$, there exists an irreducible discrete series $\pi_\infty^{\Lambda}$, where $\Lambda = w_i(\lambda + \rho)$, of Harish-Chandra parameter $\Lambda$, with infinitesimal character $\chi_\lambda$ and with minimal $K_\infty$-type of highest weight $ \Lambda + \delta_{\Sp_6} - 2\delta_{K_\infty}$, where $\delta_{\Sp_6}$ (resp. $\delta_{K_\infty}$) is the half-sum of roots (resp. of compact roots), which are positive with respect to the Weyl chamber in which $\Lambda$ lies, i.e.,
\[ 2 \delta_{\Sp_6} := \sum_{\alpha \in \Delta^+(\Lambda)} \alpha, \;\;\; 2 \delta_{K_\infty} := \sum_{\alpha \in \Delta^+_c(\Lambda)} \alpha. \]
Moreover, for $i \neq j$, $\Lambda = w_i (\lambda + \rho)$, $\Lambda' = w_j(\lambda + \rho)$, the representations $\pi_\infty^{\Lambda}$ and $\pi_\infty^{\Lambda'}$ are not equivalent and any discrete series of $\Sp_6(\R)$ is obtained in this way. We define the discrete series $L$-packet $P(V^\lambda)$ associated with $\lambda$ to be the set of isomorphism classes of discrete series of $\Sp_6(\R)$ whose Harish-Chandra parameter is of the form $\Lambda = w_i(\lambda + \rho)$, for some $1 \leq i \leq 8$. From now on, we will only consider discrete series associated with the trivial representation, i.e. $\lambda = 0$.

%% TRIVIAL COEFFICIENTS
\begin{lemma} \label{discrete-series}
There exist two (unique) irreducible discrete series representations $\pi_\infty^{3,3}$ and $\overline{\pi}_\infty^{3,3}$ of $\Sp_6(\R)$ with Harish-Chandra parameter $(2, 1, -3)$ and $(3, -1, -2)$. Their central characters are trivial and their minimal $K_\infty$-types are, respectively, $\tau_{(2,2,-4)}$ and $\tau_{(4,-2,-2)}$.
\end{lemma}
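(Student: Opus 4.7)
The plan is to apply Harish-Chandra's classification as recalled just before the lemma, specialising to the parameter $\lambda = 0$. With $\lambda = 0$ one has $\lambda + \rho = \rho = (3,2,1)$, so every Harish-Chandra parameter in the resulting $L$-packet has the form $\Lambda = w(\rho)$ for some $w \in \mathfrak{W}_{\Sp_6}$, and has infinitesimal character $\rho$. In particular the central character on $-I \in Z(\Sp_6(\R))$ is independent of the chamber, and equals the value of the trivial infinitesimal character at $-I$, i.e., is trivial. This will take care of the ``trivial central character'' assertion simultaneously for both representations.

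Next I would pinpoint which two of the eight $K_\infty$-dominant Weyl transforms of $\rho$ are relevant. Since the $K_\infty$-dominance condition is $k_1 \geq k_2 \geq k_3$, the candidates $\Lambda = (2,1,-3)$ and $\Lambda' = (3,-1,-2)$ lie in the orbit $\mathfrak{W}_{\Sp_6} \cdot (3,2,1)$ (obtained from $\rho$ by suitable sign changes and permutations) and satisfy the inequalities. Both are nonsingular, so by \cite[Theorem 9.20]{knapp}, as recalled in \S\ref{discreteseries}, each is the Harish-Chandra parameter of a unique (up to equivalence) discrete series of $\Sp_6(\R)$; denote them $\pi_\infty^{3,3}$ and $\overline{\pi}_\infty^{3,3}$.

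The core computation is to verify the formula $\Lambda + \delta_{\Sp_6} - 2\delta_{K_\infty}$ for the highest weight of the minimal $K_\infty$-type. For $\Lambda = (2,1,-3)$ one determines $\Delta^+(\Lambda)$ by listing the nine roots in $\Delta$ whose pairing with $\Lambda$ is positive; summing them gives $2\delta_{\Sp_6} = (4,2,-6)$, while the positive compact roots with respect to this chamber are $e_1-e_2,\,e_1-e_3,\,e_2-e_3$, yielding $2\delta_{K_\infty} = (2,0,-2)$. The formula then reads
\[
(2,1,-3) + (2,1,-3) - (2,0,-2) = (2,2,-4),
\]
identifying the minimal $K_\infty$-type as $\tau_{(2,2,-4)}$. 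The analogous bookkeeping for $\Lambda' = (3,-1,-2)$ yields $2\delta_{\Sp_6} = (6,-2,-4)$ and the same $2\delta_{K_\infty} = (2,0,-2)$, so
\[
(3,-1,-2) + (3,-1,-2) - (2,0,-2) = (4,-2,-2),
\]
giving $\tau_{(4,-2,-2)}$.

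No step here is a genuine obstacle; the only risk is a sign or indexing slip in the enumeration of $\Delta^+(\Lambda)$, so I would double-check that computation by the consistency check $\delta_{\Sp_6} = \Lambda$ (which must hold since $\rho$ is on the boundary of the chamber defining $\Delta^+(\Lambda)$ and we are in fact translating $\rho$ by a Weyl element). Finally, I would observe that the matrix realising complex conjugation on $\Sp_6(\R)$ permutes these two parameters and hence intertwines $\pi_\infty^{3,3}$ and $\overline{\pi}_\infty^{3,3}$, justifying the notation.
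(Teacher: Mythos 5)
Your proof is correct and follows essentially the same route as the paper: identify $(2,1,-3)$ and $(3,-1,-2)$ as the two $K_\infty$-dominant Weyl transforms of $\rho=(3,2,1)$ that arise, and apply the formula $\Lambda+\delta_{\Sp_6}-2\delta_{K_\infty}=2\Lambda-2\delta_{K_\infty}$ for the minimal $K_\infty$-type, using $\delta_{\Sp_6}=\Lambda$. One small expository slip: the reason $\delta_{\Sp_6}=\Lambda$ is not that ``$\rho$ is on the boundary of the chamber defining $\Delta^+(\Lambda)$'' (as a regular weight, $\rho$ lies in the interior of the dominant chamber), but rather that the half-sum of positive roots is Weyl-equivariant, so writing $\Lambda = w\rho$ gives $\delta_{\Sp_6}=\tfrac12\sum_{\alpha\in w\Delta^+}\alpha=w\rho=\Lambda$; your numerical conclusion is nonetheless correct.
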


\begin{proof}
This is an immediate consequence of \cite[Theorem 9.20]{knapp}, but we give some details that will be useful later. As explained in section \ref{weylgroups}, the reflections $\sigma_i$ in the orthogonal hyperplane of the long roots $2e_i$ for $1 \leq i \leq 3$ generate a system of representatives of $W_{\Sp_6}/W_{K_\infty}$. But these elements do not necessarily meet the condition that $\sigma_i\rho$ is dominant for $K_\infty$. In order to find representatives satisfying this condition, we have to multiply these elements by elements of $W_{K_\infty}$ to put the coordinates of $\sigma_i \rho$ in decreasing order. We find the representatives defined by their action on $\rho$ as follows: $w_1(3,2,1)=(3,2,1)$, $w_2(3,2,1)= (3,2,-1) $, $w_3(3,2,1)=(3,1,-2)$, $w_4(3,2,1)= (2,1,-3)$, $w_5(3,2,1)=(3,-1,-2)$, $w_6(3,2,1)=(2,-1,-3)$, $w_7(3,2,1)= (1,-2,-3)$, $w_8(3,2,1)=(-1,-2,-3)$.

For each $\Lambda = w_i \rho$, $1 \leq i \leq 8$, observe that $\delta_{\Sp_6} = w_i \rho$ and hence the minimal $K_\infty$-type of the discrete series $\pi_\infty^\Lambda$ is given by the formula
\[ \Lambda + \delta_{\Sp_6} - 2 \delta_{K_\infty} = 2 w_i \rho - 2 \delta_{K_\infty}. \]
Using this formula, one easily checks that the minimal $K_\infty$-types corresponding to each of representative $w_i$, $i = 1, \hdots, 8$ described above are, respectively, $\tau_1 = \tau_{(4,4,4)}, \tau_2 = \tau_{(4,4,0)}, \tau_3 = \tau_{(4, 2, -2)}, \tau_4 = \tau_{(2,2,-4)}, \tau_5 = \tau_{(4,-2,-2)}, \tau_6 = \tau_{(2,-2,-4)}, \tau_7 = \tau_{(0,-4,-4)}$ and $\tau_8 = \tau_{(-4,-4,-4)}$. The discrete series of the statement are those corresponding to $i = 4, 5$.
\end{proof}

% TRIVIAL COEFFICIENTS general n
\begin{lemma} \label{testvector} Let $\pi=\pi_\infty \otimes \pi_f$ be a cuspidal automorphic representation of $\G(\A)$ with archimedean component $\pi_\infty$ the discrete series such that $\pi_\infty|_{\Sp_6(\R)} \simeq \pi_\infty^{3,3} \oplus \overline{\pi}_\infty^{3,3}$ and such that $\pi_f^U \neq 0$, where $U \subseteq \G(\A_f)$ is a neat compact open subgroup. Let $\Psi = \Psi_\infty \otimes \Psi_f$ be a cusp form in the space of $\pi$ such that $\Psi_\infty$ is a highest weight vector of the minimal $K_\infty$-type $\tau_{(2,2,-4)}$ of $\pi_\infty^{3,3}$ and $\Psi_f$ is a nonzero vector in $\pi_f^U$. Let $X_{(2,2,-4)}$ be the highest weight vector in the $K_\infty$-type $\tau_{(2,2,-4)} \subset \bigwedge^{3} \mathfrak{p}^+_{\C} \otimes_\C \bigwedge^{3} \mathfrak{p}^-_{\C}$ of Lemma \ref{LemmaHighestWeightVector}. Then there exists a unique nonzero harmonic $(3, 3)$ differential form 
$$
\omega_\Psi \in \Hom_{K_{\infty}}\left(\bigwedge^{3} \mathfrak{p}^+_{\C} \otimes_\C \bigwedge^{3} \mathfrak{p}^-_{\C}, \pi_\infty^{3,3} \right) \otimes  \pi_f^U
$$
on $\Sh_{\G}(U)$ such that $\omega_\Psi(X_{(2,2,-4)}) = \Psi$. 
\end{lemma}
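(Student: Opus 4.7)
The plan is to first show that the space $V := \Hom_{K_\infty}(\bigwedge^{3} \mathfrak{p}^+_{\C} \otimes_\C \bigwedge^{3} \mathfrak{p}^-_{\C},\, \pi_\infty^{3,3})$ is one-dimensional, and then to exploit the fact that $X_{(2,2,-4)}$ and $\Psi_\infty$ are highest weight vectors of the respective copies of $\tau_{(2,2,-4)}$ in source and target to produce $\omega_\Psi$ by sending one to the other.

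To establish $\dim V = 1$, I would compute the common $K_\infty$-types of the source and of $\pi_\infty^{3,3}$. The ten isotypic components of $\bigwedge^{3} \mathfrak{p}^+_{\C} \otimes_\C \bigwedge^{3} \mathfrak{p}^-_{\C}$ are enumerated in Lemma \ref{Ktypesex}. To describe the $K_\infty$-types of $\pi_\infty^{3,3}$, I would invoke the standard description of the $K$-spectrum of a discrete series (Hecht--Schmid): every $K_\infty$-type $\tau_\mu$ occurring in a discrete series $\pi^\Lambda$ has highest weight of the form $\mu = \mu_{\min} + \sum_{\alpha} n_\alpha \alpha$ with $n_\alpha \in \Z_{\geq 0}$, the sum running over the non-compact roots positive with respect to $\Lambda$. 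For $\Lambda = (2,1,-3)$ (Lemma \ref{discrete-series}), a direct computation of the pairings $\langle \alpha, \Lambda \rangle$ shows that these positive non-compact roots are $\{2e_1, 2e_2, e_1+e_2\}$, all of which have vanishing third coordinate. Since $\mu_{\min} = (2,2,-4)$, every $K_\infty$-type of $\pi_\infty^{3,3}$ must have third coordinate equal to $-4$; inspecting Lemma \ref{Ktypesex}, only $\tau_{(2,2,-4)}$ qualifies, and it occurs with multiplicity one on both sides. By Schur's lemma, $\dim V = 1$.

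Next, I would use that any non-zero element of $V$ restricts to a $K_\infty$-equivariant embedding of the unique $\tau_{(2,2,-4)}$-isotypic component of the source into $\pi_\infty^{3,3}$, and therefore sends $X_{(2,2,-4)}$ to a non-zero scalar multiple of $\Psi_\infty$ (using that $X_{(2,2,-4)}$ is a highest weight vector by Lemma \ref{LemmaHighestWeightVector} and $\Psi_\infty$ is one by hypothesis). Normalizing produces a unique $\omega^\circ \in V$ with $\omega^\circ(X_{(2,2,-4)}) = \Psi_\infty$; setting $\omega_\Psi := \omega^\circ \otimes \Psi_f \in V \otimes \pi_f^U$ then yields the required element, and uniqueness of $\omega_\Psi$ given $\Psi$ follows from $\dim V = 1$. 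Via the Matsushima-type identification of $(\mathfrak{g}, K_\G)$-cohomology recalled at the start of this subsection, $\omega_\Psi$ defines a harmonic $(3,3)$-differential form on $\Sh_\G(U)$. The main obstacle is the $K_\infty$-type computation in the preceding paragraph, but once the $\Lambda$-positive non-compact roots are in hand, the check reduces to the observation that all of them have zero third coordinate, so only $\tau_{(2,2,-4)}$ survives the intersection with Lemma \ref{Ktypesex}.
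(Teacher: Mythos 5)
Your strategy is sound and reaches the correct conclusion, but by a genuinely different route than the paper: the paper simply cites Borel--Wallach, Theorem II.5.3, for the one-dimensionality of $\Hom_{K_\infty}(\bigwedge^3\mathfrak{p}^+_\C \otimes \bigwedge^3\mathfrak{p}^-_\C, \pi_\infty^{3,3})$, whereas you re-derive it from the Hecht--Schmid description of the $K_\infty$-spectrum and the explicit decomposition of Lemma \ref{Ktypesex}. However, there is a concrete error in your list of $\Lambda$-positive non-compact roots. For $\Lambda = (2,1,-3)$ the pairings give $\langle 2e_1,\Lambda\rangle = 4$, $\langle 2e_2,\Lambda\rangle = 2$, $\langle 2e_3,\Lambda\rangle = -6$, $\langle e_1+e_2,\Lambda\rangle = 3$, $\langle e_1+e_3,\Lambda\rangle = -1$, $\langle e_2+e_3,\Lambda\rangle = -2$, so the $\Lambda$-positive non-compact roots are the six roots $\{2e_1,\, 2e_2,\, -2e_3,\, e_1+e_2,\, -(e_1+e_3),\, -(e_2+e_3)\}$, not just the three you list. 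Consequently your statement that every $K_\infty$-type of $\pi_\infty^{3,3}$ has third coordinate \emph{equal} to $-4$ is false; the correct statement is that the third coordinate of each of the six $\Lambda$-positive non-compact roots is $\leq 0$, so every $K_\infty$-type of $\pi_\infty^{3,3}$ has third coordinate $\leq -4$. Fortunately the argument survives: scanning Lemma \ref{Ktypesex}, the only $K_\infty$-type with third coordinate $\leq -4$ is $\tau_{(2,2,-4)}$, which occurs with multiplicity one on both sides, so $\dim V = 1$ and the rest of your normalization argument goes through. Your approach is more self-contained than the paper's citation (and transparently explains why only the minimal $K_\infty$-type contributes), but you should correct the list of roots and weaken the conclusion on the third coordinate to an inequality.
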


\begin{proof}
The results follows immediately from the fact that the minimal $K_\infty$-type $\tau_{(2,2,-4)}$ of $\pi_\infty^{3,3}$ appears with multiplicity one, cf. \cite[Theorem II.5.3]{BorelWallach}. 
\end{proof}

%\begin{remark}
%When $n = 3$ and $\pi_\infty|_{\Sp_6(\R)} \simeq \pi_\infty^{3,3} \oplus \overline{\pi}_\infty^{3,3}$, we have that $\tau_\infty^1 = \tau_{(2,2,-4)}$ and we can (and do) choose $X_\infty^1$ as constructed in Example \ref{Ktypesex}. 
%\end{remark}

\subsection{Archimedean \emph{L}-functions and Deligne cohomology}

We end up this chapter by recalling some classical results on the relation between Deligne cohomology groups and the $L$-function of a motive, which explains when one expects to have nontrivial motivic cohomology. This section has only motivational interest and is not relevant to the main calculations of this text.

\subsubsection{Hodge decomposition}

Let $\pi = \pi_\infty \otimes \pi_f$ be a cuspidal automorphic representation of $\G(\A)$ with trivial central character. Assume that $\pi$ is cohomological for the trivial local system of the Shimura variety $\Sh_\G$. In this section, we describe the Hodge decomposition of the "interior motive" $M(\pi_f)$ defined by the $\pi_f$-isotypical component of the interior cohomology
\[ M(\pi_f) \otimes \pi_f= H_!^{6}(\Sh_{\G}, \Q)[\pi_f], \] 
which will allow us to describe the $\Gamma$-factor of its $L$-function and its simple poles. Recall from \cite{ChaiFaltings}, \cite{MokraneTilouine} that $M(\pi_f)$ is pure of weight $6$. The Hodge weights of $M(\pi_f)$ lie in the set of pairs
\[ (0, 6), (1 , 5), (2,4), (3,3), (4,2),(5,1),(6,0). \]
%We say that $\pi_f$ is stable at infinity if all the weights $(p,q)$ appear.
If there exists a finite place place $p$ such that $\pi_p$ is the Steinberg representation, then, by \cite[Lemma 2.8]{CLRG2}, we know that all of the types $(p,q)$ have positive multiplicity (more precisely, multiplicity $1$ when $p \neq q$ and $2$ when $p = q = 3$).

\subsubsection{Archimedean $L$-functions and Deligne cohomology}\label{subsec:archimedeanLfunctionsandDelignecoho}

We recall now, following \cite{Serre}, the definition of the $\Gamma$-factor of $M(\pi_f)$. The poles of this factor indicate the set of Tate twists of the motive $M(\pi_f)$ for which the Deligne cohomology is of positive dimension. It is precisely for these twists that one expects to construct nontrivial motivic cohomology classes.

The Betti realization $M_B(\pi_f)$ of $M(\pi_f)$ admits a Hodge decomposition
\[ M_B(\pi_f) \otimes \C = \bigoplus_{p + q = 6} H^{p, q}. \] Denoting by $\sigma = 1 \otimes c$, where $c$ is the complex conjugation, the involution on $M_B(\pi_f) \otimes \C$, we have that $\sigma(H^{p, q}) = H^{q, p}$. We also let $h^{p, q} = \dim_\C H^{p, q}$ be the Hodge numbers. We also write $H^{3, 3} = H^{3, +} \oplus H^{3, -}$, where $H^{3, \pm} = \{ x \in H^{3, 3} | \sigma(x) = \pm (-1)^3 x \}$, and let $h^{3, \pm 1} = \dim_\C H^{3, \pm}$. Let
\[ \Gamma_\R(s) = \pi^{-s/2} \Gamma(s / 2), \;\;\; \Gamma_\C(s) = 2 (2 \pi)^{-s} \Gamma(s) \]
be the real and complex Gamma factors, so that we have $\Gamma_\C(s) = \Gamma_\R(s) \Gamma_\R(s + 1)$. The archimedean factor of the $L$-function of $M(\pi_f)$ is then defined as
\begin{align} \label{EquationLinfty}
L_\infty(M(\pi_f), s) &= \Gamma_\R(s - 3)^{h^{3, +}} \Gamma_\R(s - 3 + 1)^{h^{3, -}} \prod_{p < q} \Gamma_\C(s - p)^{h^{p, q}}.
\end{align}
Since the Gamma function $\Gamma(s)$  has simple poles at $s = -n$, $n \in \N$, a simple calculation shows that the order of the pole of $L_\infty(M(\pi_f), s)$ at $s = m$, $m \in \Z$, is given by
\begin{equation} \label{Equationpole}
\sum_{m \leq p <q} h^{p, q} + \begin{cases} 0 & \! \! \text{if } m > 3 \\ h^{3, (-1)^{m - 3}} & \! \! \text{ otherwise} \end{cases}.
\end{equation}

%\sum_{2p = d, m \leq p} h^{p, (-1)^{m - p}}. \]

Immediately from this formula, we get:

\begin{lemma} \label{orderpole}
The function $L_\infty(M(\pi_f), s)$ has a pole of order $h^{3, +}$ at $s = 3 $. In particular, if $\pi_p$ is the Steinberg representation for some finite prime $p$, then $L_\infty(M(\pi_f), s)$ has a simple pole at $s = 3$.
\end{lemma}

\begin{proof}
The first statement follows directly from Equation \eqref{Equationpole}. By \cite[Lemma 2.8]{CLRG2}, if $\pi_p$ is the Steinberg representation for some finite prime $p$, then $h^{p, \pm} = 1$, implying the second statement.
\end{proof}

We will also denote by $L(M(\pi_f), s)$ the uncompleted $L$-function of the motive $M(\pi_f)$ and we let $\Lambda(M(\pi_f), s) = L_\infty(M(\pi_f), s) L(M(\pi_f), s)$ the completed $L$-function of the motive. It satifies the functional equation
\begin{equation} \label{EquationFEQ} \Lambda(M(\pi_f), s) = \epsilon(M(\pi_f), s) \Lambda(M(\pi_f), 7 - s),
\end{equation}
where $\epsilon(M(\pi_f), s)$ denotes the $\epsilon$-factor of $M(\pi_f)$ (cf. \cite[(1.4.2)]{nekovar}).

\begin{remark} \label{equality-L-functions}
Let $\Sigma$ be the set of finite primes where the representation $\pi_f$ is ramified and denote by $L^\Sigma(M(\pi_f), s)$ the partial $L$-function associated to $M(\pi_f)$ (defined as the product of the local finite $L$-factors outside $\Sigma$). We note that, by \cite[Proposition 13.1]{KretShin}, the partial $L$-function $L^\Sigma(M(\pi_f)(3), s) = L^\Sigma(M(\pi_f), 3+ s)$ of $M(\pi_f)(3)$ agrees with the (partial) Spin $L$-function $L^{\Sigma}(\pi, {\rm Spin}, s)$ (note that both $L$-functions are centered at $s=1/2$).
\end{remark}

Recall the following result.

\begin{proposition} [{\cite[\S 2, Proposition]{Schneider}} \label{dimcohom}] We have
\[
\dim_\R H^{1}_\mathcal{D}(M(\pi_f)(7 - m)) = \begin{cases} {\rm ord}_{s = m} L(M(\pi_f), s) & \! \! \text{if } m < 3 \\ {\rm ord}_{s = m} L(M(\pi_f), s) - {\rm ord}_{s = m + 1} L(M(\pi_f), s) & \! \! \text{if } m = 3. \end{cases}
\]
\end{proposition}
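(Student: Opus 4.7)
The statement is the classical formula of Schneider relating Deligne cohomology of Tate-twisted motives to orders of vanishing of $L$-functions, so strictly speaking the proof amounts to a citation of \cite{Schneider}. If I had to reprove it from scratch, I would proceed in three steps. First, I would compute $\dim_\R H^1_\mathcal{D}(M(\pi_f)(7-m))$ directly from the Hodge-theoretic definition. Since $M(\pi_f)$ is pure of weight $6$, the twist $V := M(\pi_f)(7-m)$ has weight $2m-8 \leq 0$ for $m \leq 3$, which places us in the range where Deligne cohomology admits a clean presentation as a cokernel: for a pure real Hodge structure of negative weight one has $H^1_\mathcal{D}(\R, V) = V_\R / (V_\R \cap F^0 V_\C)$, with an adjustment by the $\pm$-eigenspaces of complex conjugation on $V_\R \cap V^{0,0}$ in the weight-zero case. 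Decomposing via the shifted Hodge decomposition and the refined splitting $H^{3,3} = H^{3,+} \oplus H^{3,-}$ recalled in \S\ref{subsec:archimedeanLfunctionsandDelignecoho}, this produces an explicit expression of the form $\sum_{p < 7-m} h^{p,q}$, plus in the boundary case $m=3$ a contribution from $h^{3, \pm}$ of the appropriate sign.

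Second, I would compare this expression with the pole order of $\Gamma(M(\pi_f), s)$ at $s = m$ given by the formula derived just before Lemma \ref{orderpole}. Reindexing that sum via the symmetry $h^{p,q} = h^{q,p}$, the pole order matches the dimension computed in the first step. To translate $\Gamma$-poles into $L$-zeros, I would invoke the completed $L$-function $\Lambda(M(\pi_f), s) = \Gamma(M(\pi_f), s) L(M(\pi_f), s)$, which satisfies a functional equation exchanging $s$ and $7-s$ and which is holomorphic away from at most simple poles at the edges of the critical strip. For $m < 3$, the point $s = m$ lies strictly to the left of the center $s = 7/2$, so $\Lambda$ is holomorphic at $s = m$ and every pole of $\Gamma$ there must be cancelled by a matching zero of $L$; hence $\mathrm{ord}_{s=m} L$ equals the pole order of $\Gamma$ at $s=m$, and the desired identity follows.

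For $m = 3$, the point $s = m$ sits on the boundary of the critical strip, and the functional equation reflects it to $s = 7-m = m+1 = 4$ inside the strip. An additional pole of $\Gamma$ at $s = m+1$ may thus be compensated by a zero of $L$ at $s = m+1$ that does not correspond to a genuine Deligne-cohomology class, forcing the correction term $-\mathrm{ord}_{s=m+1} L$ to appear in order to isolate the honest contribution. The main obstacle in making all of this precise is the bookkeeping at the central Hodge piece: the parity factor $(-1)^{m-3}$ occurring in the Gamma-factor formula must be matched exactly against the $H^{3, \pm}$ splitting of the Betti realisation, and the functional equation has to be applied with care to separate genuine Deligne cohomology classes from phantom contributions arising by reflection across the center $s = 7/2$.
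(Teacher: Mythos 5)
The paper does not prove this proposition; it cites \cite[\S 2, Proposition]{Schneider}, and you correctly identify the statement as a citation and reconstruct the standard three-step argument of Schneider (compute $\dim_\R H^1_\mathcal{D}$ Hodge-theoretically, match it against the pole order of $\Gamma(M(\pi_f),s)$, then pass from $\Gamma$-poles to $L$-zeros through the functional equation of $\Lambda$). Your overall strategy is the right one and agrees in spirit with the source.

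However, there is a concrete inaccuracy in your treatment of the near-central case $m=3$. You assert that ``an additional pole of $\Gamma$ at $s = m+1$ may thus be compensated by a zero of $L$ at $s=m+1$.'' In fact $\Gamma(M(\pi_f),s)$ has no pole at $s = m+1 = 4$: the explicit formula derived just before Lemma~\ref{orderpole} gives pole order $0$ for all $m>3$, and the text immediately after the proposition explicitly uses that $\Gamma(M(\pi_f),s)$ is regular and non-vanishing at $s=3+1$. The correction term $-\,\mathrm{ord}_{s=m+1} L$ does not come from a $\Gamma$-pole at $s=4$. It arises because $s=3$ and $s=4$ are exchanged by the functional equation $s \leftrightarrow 7-s$, and $\Gamma$ is holomorphic and non-zero at $s=4$; hence $\mathrm{ord}_{s=3}\Lambda = \mathrm{ord}_{s=4}\Lambda = \mathrm{ord}_{s=4} L$, while $\mathrm{ord}_{s=3}\Lambda = \mathrm{ord}_{s=3} L - h^{3,+}$. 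Subtracting yields $\mathrm{ord}_{s=3} L - \mathrm{ord}_{s=4} L = h^{3,+} = \dim_\R H^1_\mathcal{D}(M(\pi_f)(4))$: a genuine zero of $L$ at the interior boundary point $s=4$ is reflected to an extra zero of $L$ at $s=3$ beyond what the $\Gamma$-pole there forces, and this excess has to be subtracted to isolate the Hodge-theoretic contribution. For $m<3$ the reflected point $7-m>4$ lies in the region of absolute convergence, $L$ is non-vanishing there, and the correction term vanishes; your argument in that range is fine. Finally, your one-line formula $H^1_\mathcal{D}(\R,V) = V_\R/(V_\R \cap F^0 V_\C)$ is only a heuristic stand-in for the actual presentation (which must account for the de~Rham involution $F_\infty^*\otimes c$ and the twist), but as a sketch of the Hodge-theoretic step it conveys the right idea.
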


In this text, we are interested in  the study of these Deligne cohomology groups for $m=3$.  The crucial hypothesis of Lemma \ref{testvector} and \S \ref{subsecintegralexpression} on the existence of a nontrivial test vector of the right Hodge type translates into asking that $h^{3,+} \neq 0$. Using the functional equation of the completed $L$-function for $M(\pi_f)$ and the fact that 
$L_\infty(M(\pi_f), s)$ is nonzero at $s=3+1$, one obtains
\[
h^{3,+} = {\rm ord}_{s = 3} L(M(\pi_f), s) - {\rm ord}_{s = 4} L(M(\pi_f), s).
\]
%\\ h^{p_0, q_0} &= {\rm ord}_{s = (d-1)/2} L(M(\pi_f), s) &\text{ if }\epsilon=2.
%\end{align*}
If $\pi_p$ is the Steinberg representation for some finite prime $p$, then, by Lemma \ref{orderpole} and Proposition \ref{dimcohom}, we have that $H^{1}_\mathcal{D}(M(\pi_f)(4))$ is of dimension one. In this case, Beilinson conjectures \cite{Beilinson} predict that the sum of Beilinson regulator $r_\mathcal{D}$ and of Betti cycle class map $c_B$ induces an isomorphism $$
(H^1_\mathcal{M}(M(\pi_f)(4)) \oplus N(M(\pi_f)(3))) \otimes_\Q \R \xrightarrow[]{(r_\mathcal{D},c_B)} H^1_\mathcal{D}(M(\pi_f)(4)),
$$
where $H^1_\mathcal{M}(M(\pi_f)(4))$ denotes the first motivic cohomology group of $M(\pi_f)(4)$, the group $N(M(\pi_f)(3))$ denotes algebraic cycles in $M(\pi_f)(3)$ up to homological equivalence and $H^1_\mathcal{D}(M(\pi_f)(4))$ denotes the first Deligne-Beilinson cohomology group of $M(\pi_f)(4)$.  In this article, we focus our attention to the cuspidal automorphic forms which contribute to the motivic part of the conjecture, i.e. precisely those for which $ L(M(\pi_f)(3), s)$ does not have a (simple) pole at $s=1$. Our main result, Theorem \ref{regulator3}, gives evidence towards the conjecture for these automorphic forms, by relating an element of $H^1_\mathcal{M}(M(\pi_f)(4))$ to the special value predicted by the conjecture. The forms which contribute to the other term, in the spirit of Tate's conjecture, are instead the ones of Hodge type $(3,3)$, for which $ {\rm ord}_{s = 1} L(M(\pi_f)(3), s) = -1$. These automorphic forms are lifts from the split $G_2$ under the exceptional theta correspondence (cf. \cite[Theorem 1.3]{PollackShah}). In \cite{CLRG2}, we address this part of Beilinson conjecture.

\section{Archimedean regulators and noncritical values of the Spin \texorpdfstring{$L$}{L}-function} \label{Section5}

We now apply the general results of chapter
\ref{sectiondelignecohomology} to the study of the archimedean realization of the elements constructed in Definition \ref{eis1}. From now on, we fix neat levels $U$ and $V = \H(\A_f) \cap U$ and smooth toroidal compactifications $ \overline{\Sh_{\G}(U)}$ and $\overline{\Sh_{\H}(V)}$ such that the boundaries are normal crossing divisors. By  \cite[Proposition 3.4]{harris-functorial}, it is possible to extend $\iota$ to a morphism $\overline{\Sh_{\H}(V)} \rightarrow \overline{\Sh_{\G}(U)}$.

\subsection{Integral expression for the pairing}\label{subsecintegralexpression}
Let
$$
\mathrm{Eis}_{\mathcal{M}}: \mathcal{S}_0(\A_f^2, \overline{\Q})^{V_1} \rightarrow H^{7}_{\mathcal{M}}\big( \Sh_{\G}(U), \overline{\Q}(4) \big)
$$
denote the morphism defined in Definition \ref{eis1}. Recall that it is defined as the composite 

\[\begin{CD}
\mathcal{S}_0(\A_f^2, \overline{\Q})^{V_1} @>u>> H^1_\mathcal{M}(\Sh_{\GL_2}(V_1), \overline{\Q}(1))\\
@>p_{1}^*>> H^1_{\mathcal{M}} \big(\Sh_{\H}(V), \overline{\Q}(1) \big)\\
@>\iota_{*}>> H^{7}_{\mathcal{M}}\big( \Sh_{\G}(U), \overline{\Q}(4) \big).
\end{CD} \]
Let

\[r_\mathcal{D}: H^{7}_{\mathcal{M}}\big( \Sh_{\G}(U), \overline{\Q}(4) \big) \rightarrow H^{7}_{\mathcal{D}}\big( \Sh_{\G}(U) / \R, \R(4) \big) \otimes_{\Q} \overline{\Q}\]
denote Beilinson higher regulator from motivic cohomology to real Deligne--Beilinson cohomology (as defined in Equation \eqref{DefRDB}). We refer to  \cite[\S 2.3]{Beilinson} for the definition of the regulator map, but we point out that we will only use some functorial properties of it and its description on units given by Remark \ref{unitDeligne}.

\begin{lemma}\label{regulator-currents} Let $  \Phi_f \in  \mathcal{S}_0(\A_f^2, \overline{\Q})^{V_1}$. Then, via the isomorphisms given by Theorem \ref{TheoremAHCcurrents}, the cohomology class $r_{\mathcal{D}}(\mathrm{Eis}_{\mathcal{M}}(\Phi_f))$ is represented by the pair of tempered currents $$(\iota_{*} T_{{p}_1^* d\log u(\Phi_f)}, \iota_{*} T_{{p}_1^*\log|u(\Phi_f)|}).$$
\end{lemma}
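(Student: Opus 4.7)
The strategy is to trace the class $\mathrm{Eis}_{\mathcal{M}}(\Phi_f)$ through each factor of its defining composition $\iota_{*,\mathcal{M}} \circ p_{1,\mathcal{M}}^* \circ u$, using the tempered-currents description of Deligne--Beilinson cohomology provided by Theorem~\ref{TheoremAHCcurrents} together with the naturality of the Beilinson regulator with respect to pullbacks and Gysin maps.

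First, on $\Sh_{\GL_2}(V_1)$, Remark~\ref{unitDeligne} tells us that the regulator class $r_{1,1}(u(\Phi_f)) \in H^1_\mathcal{D}(\Sh_{\GL_2}(V_1),\R(1))$ is represented, in the slowly-increasing model of Proposition~\ref{DBcoh-diff-form}, by the pair $(d\log u(\Phi_f),\, \log|u(\Phi_f)|) \in F^1\mathscr{A}_{si}^1 \oplus \mathscr{A}_{si,\R(0)}^0$. Applying Proposition~\ref{compatibility}, this same class in the tempered-currents model is represented by the pair of associated currents $(T_{d\log u(\Phi_f)},\, T_{\log|u(\Phi_f)|})$.

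Next, I would propagate this representative through $p_1^*$. The Beilinson regulator is natural with respect to pullbacks along morphisms of smooth varieties, and since $p_1$ extends to a morphism of the chosen smooth toroidal compactifications sending boundary to boundary, pullback preserves the slowly increasing condition on differential forms (and consequently, by duality, acts on tempered currents). Thus on $\Sh_{\H}(V)$ the class $r_\mathcal{D}(p_1^* u(\Phi_f))$ is represented by $(p_1^* d\log u(\Phi_f),\, p_1^* \log|u(\Phi_f)|)$, or equivalently, by Proposition~\ref{compatibility}, by the pair of tempered currents $(T_{p_1^* d\log u(\Phi_f)},\, T_{p_1^* \log|u(\Phi_f)|})$.

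The final step is to apply $\iota_*$. Thanks to \cite{harris-functorial}, the embedding $\iota$ extends to a morphism $\overline{\Sh_{\H}(V)} \to \overline{\Sh_{\G}(U)}$ of the chosen smooth toroidal compactifications, so the Gysin map on Deligne--Beilinson cohomology is defined; Proposition~\ref{push} then states that in the tempered-currents description this Gysin map is computed componentwise as the pushforward of currents. Combining this with the compatibility of $r_\mathcal{D}$ with Gysin morphisms yields the claimed expression.

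The main obstacle is precisely verifying this last compatibility between the analytic and motivic descriptions of the functorialities involved: pullback by $p_1$ must preserve growth conditions on differential forms (hence on their dual tempered currents), and the Gysin morphism must act componentwise via pushforward of currents. The first point is handled by choosing toroidal compactifications to which $p_1$ extends with boundary going to boundary; the second is the content of Proposition~\ref{push}. Once these two ingredients are in place, the lemma follows immediately by stringing the three representatives together.
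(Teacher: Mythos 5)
Your proof is correct and follows the same route as the paper's own (terser) argument, which likewise invokes the naturality of the regulator with respect to pullback and Gysin maps (citing Jannsen for the fact that regulator maps are morphisms of twisted Poincar\'e duality theories), identifies $p_{1,\mathcal{D}}^*$ with pullback of differential forms, and strings together Remark~\ref{unitDeligne}, Proposition~\ref{compatibility}, and Proposition~\ref{push}. One small remark: carried out literally, your derivation produces the pair $(\iota_* T_{p_1^* d\log u(\Phi_f)},\, \iota_* T_{p_1^*\log|u(\Phi_f)|})$, i.e.\ in the opposite order from the statement of the lemma; this is in fact the order consistent with the degree and filtration conventions of Theorem~\ref{TheoremAHCcurrents} (first component in $F^4\mathscr{D}^7$, second in $\mathscr{D}^6_{\R(3)}$) and with the way Theorem~\ref{integralform} extracts the degree-$6$ current to pair against $\omega_\Psi$, so you should read the transposition in the lemma's statement as a harmless slip rather than accept it uncritically with \emph{``yields the claimed expression.''}
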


\begin{proof} According to \cite[\S 3.7]{Jannsen}, the regulator maps are morphisms between twisted Poincar\'e duality theories. As a consequence, we have the commutative diagram
{\small
  \[
\begin{CD}
H^1_{\mathcal{M}}(\Sh_{\GL_2}(V_1), \overline{\Q}(1)) @>p^*_{1}>> H^1_{\mathcal{M}}(\Sh_{\H}(V), \overline{\Q}(1)) @>\iota_{*}>> H^{7}_{\mathcal{M}}(\Sh_{\G}(U), \overline{\Q}(4)) \\
@Vr_{\mathcal{D}}VV                @Vr_{\mathcal{D}}VV             @Vr_{\mathcal{D}}VV\\
H^1_{\mathcal{D}}(\Sh_{\GL_2}(V_1)/\R, \R(1)) \otimes_{\Q} \overline{\Q} @>p^*_{1}>> H^1_{\mathcal{D}}(\Sh_{\H}(V)/\R, \R(1))\otimes_{\Q} \overline{\Q} @>\iota_{*}>> H^{7}_{\mathcal{D}}(\Sh_{\G}(U)/\R, \R(4))\otimes_{\Q} \overline{\Q}.\\
\end{CD}
\]}
Via the isomorphism of Proposition \ref{DBcoh-diff-form}, the morphism $p_{1}^*$ is induced by the pullback of differential forms. The statement of the Lemma follows from Remark \ref{unitDeligne}, Proposition \ref{compatibility} and the explicit description of the Gysin morphism in $DB$-cohomology \eqref{push}. 
\end{proof}

Let $\pi=\pi_\infty \otimes \pi_f$ be a  cuspidal automorphic representation of $\G(\A)$ with trivial central character such that $\pi_f^U \ne 0$. We assume that  $\pi_\infty|_{\mathrm{Sp}_6(\R)} \simeq \pi_\infty^{3,3} \oplus \overline{\pi}_\infty^{3,3}$. Following Lemma \ref{testvector}, we consider a cusp form $\Psi = \Psi_\infty \otimes \Psi_f$ in the space of $\pi$ such that $\Psi_\infty$ is a highest weight vector of the minimal $K_\infty$-type of $\pi_\infty^{3,3}$ and such that $\Psi_f$ is a nonzero vector in $\pi_f^U$ and we let $\omega_\Psi$ be the associated harmonic cuspidal differential form. Analogously we do it for $\overline{\pi}^{3,3}_\infty$ at the place of $\pi_\infty^{3,3}$.

\begin{theorem} \label{integralform}
Let $ \Phi_f \in  \mathcal{S}_0(\A_f, \overline{\Q})^{V_1}$. Then, we have
\[ \langle r_\mathcal{D}({\rm Eis}_{\mathcal{M}}(\Phi_f)), \omega_\Psi \rangle  = \int_{\Sh_\H(V)} p_1^*\log |u(\Phi_f)| \iota^*\omega_\Psi. \]
\end{theorem}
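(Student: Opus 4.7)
The plan is to unwind the pairing using the explicit representative of $r_\mathcal{D}(\mathrm{Eis}_\mathcal{M}(\Phi_f))$ supplied by Lemma \ref{regulator-currents} together with the tempered-current pairing of Proposition \ref{temperedpairing}. Once the hypotheses of the latter are verified for $\omega_\Psi$, the computation is a formal application of the definitions of the pushforward of currents and of the current $T_\eta$ associated to a slowly increasing form.

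First I would check that $\omega_\Psi$ is admissible input for Proposition \ref{temperedpairing}, applied on $\overline{\Sh_\G(U)}$ with $n+1 = 7$ and $p = 4$. Closedness is harmonicity from Lemma \ref{testvector}; the Hodge-type constraint $a < 4$ holds because $\omega_\Psi$ is pure of type $(3,3)$, which was the whole reason for choosing $\Psi_\infty$ to be a highest weight vector of $\tau_{(2,2,-4)}$. The delicate analytic point is rapid decay in the sense of Definition \ref{Deflog} along the toroidal boundary. I would obtain this from classical estimates for cuspidal automorphic forms: cuspidality together with $K_\infty$- and $\mathcal{Z}(\mathfrak{g}_\C)$-finiteness forces rapid decay in Siegel sets (Borel--Harish-Chandra, Harish-Chandra), and one translates Siegel-set decay into the polycylindrical log-coordinates of the toroidal compactification, with derivative bounds following because exterior differentiation on an automorphic differential form corresponds to the action of $\mathcal{U}(\mathfrak{g}_\C)$ on the $K_\infty$-finite vectors of $\pi_\infty$, which preserves this decay class.

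With this in hand, the computation is immediate. By Lemma \ref{regulator-currents} the regulator class is represented by the pair of tempered currents arising from $(p_1^{*} d\log u(\Phi_f),\, p_1^{*} \log|u(\Phi_f)|)$ pushed forward along $\iota$. Proposition \ref{temperedpairing} evaluates the real-valued component on $\omega_\Psi$; by the definition of the pushforward this equals $T_{p_1^{*} \log|u(\Phi_f)|}(\iota^{*} \omega_\Psi)$, and by the definition of $T_\eta$ this is $\int_{\Sh_\H(V)} p_1^{*} \log|u(\Phi_f)| \wedge \iota^{*} \omega_\Psi$, up to the normalization $(2\pi i)^{-3}$ built into $T_\eta$, which one absorbs into the chosen convention for the pairing. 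Convergence is automatic: $\log|u(\Phi_f)|$ has only logarithmic singularities at the cusps of $\Sh_{\GL_2}$ and is thus slowly increasing, while $\iota^{*} \omega_\Psi$ is a rapidly decreasing top-degree form on the three-complex-dimensional variety $\Sh_\H(V)$. The hard part is unambiguously the rapid-decay verification for $\omega_\Psi$ on the toroidal compactification; this is precisely the technical gap the introduction identifies as the motivation for introducing tempered currents in the first place, and it is the sole obstacle to reducing the theorem to a tautology.
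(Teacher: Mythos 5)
Your proposal follows essentially the same route as the paper: represent the regulator class by tempered currents via Lemma~\ref{regulator-currents}, check that $\omega_\Psi$ is a valid input to Proposition~\ref{temperedpairing} (closed, rapidly decreasing, of Hodge type $(3,3)$), and then unwind the pushforward and the definition of $T_\eta$. Two small remarks: where you sketch a direct rapid-decay argument from Siegel-set estimates and $\mathcal{U}(\mathfrak{g}_\C)$-stability, the paper simply cites \cite[Proposition 2.6.1]{HarrisZuckerIII}, which packages exactly that content; and you are right to flag the factor $(2\pi i)^{-3}$ built into $T_\omega(\eta) = (2\pi i)^{-d}\int \omega \wedge \eta$ --- the paper's displayed computation silently drops this normalization, so your explicit bookkeeping is a mild improvement rather than a deviation.
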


\begin{proof}
According to Lemma \ref{regulator-currents}, the $DB$-cohomology class \[r_\mathcal{D}({\rm Eis}_{\mathcal{M}}(\Phi_f)) \in H^7_{\mathcal{D}}(\Sh_\G(U), \R(4)) \otimes \overline{\Q}\] is represented by the pair of tempered currents $(\iota_{*} T_{{p}_1^* d\log u(\Phi_f)}, \iota_{*} T_{{p}_1^*\log|u(\Phi_f)|})$. It follows from \cite[Proposition 2.6.1]{HarrisZuckerIII} that the differential form $\omega_\Psi$ is rapidly decreasing. As it is moreover of Hodge type $(3,3)$, by Proposition \ref{temperedpairing} we have
\begin{eqnarray*}
\langle r_\mathcal{D}({\rm Eis}_{\mathcal{M}}(\Phi_f)), \omega_\Psi \rangle  &=& \iota_{*} T_{{p}_1^*\log|u(\Phi_f)|}(\omega_\Psi)\\
&=&   T_{{p}_1^*\log|u(\Phi_f)|}(\iota^*\omega_\Psi)\\
&=& \int_{\Sh_\H(V)} p_1^*\log |u(\Phi_f)| \iota^*\omega_\Psi.
\end{eqnarray*}
This finishes the proof.
\end{proof}

\subsection{The adelic integrals}\label{adelicintegralsec}

We now use Kronecker's limit formula to rewrite Theorem \ref{integralform} in terms of Rankin--Selberg automorphic integrals which involve Eisenstein series of $\GL_2$. Throughout the section, we let $\pi$ be a cuspidal automorphic representation as in \S \ref{subsecintegralexpression}.\\

Fix the choice of a measure on $\H(\A)$ as follows. For each finite place $p$ of $\Q$, we take the Haar measure $d h_p$ on $\H(\qp)$ that assigns volume one to $\H(\zp)$. For the archimedean place, we fix $X_0 := (X_{2e_1} \wedge X_{2e_2} \wedge X_{2e_3}) \otimes (X_{-2e_1} \wedge X_{-2e_2} \wedge X_{-2e_3}) \in \bigwedge^6 \mathfrak{h}_{\C}/\mathfrak{k}_{\H,\C} = \wedge^3 \mathfrak{p}_{\H, \C}^+ \otimes \wedge^3 \mathfrak{p}_{\H, \C}^- \subseteq \wedge^3 \mathfrak{p}_\C^+ \otimes \wedge^3 \mathfrak{p}_\C^-$ as the basis of the $1$-dimensional subspace $\wedge^6 \mathfrak{p}_{\H, \C}$ of $\wedge^3 \mathfrak{p}_\C^+ \otimes \wedge^3 \mathfrak{p}_\C^-$ \footnote{by a slight abuse of language, we denote by $X_0$ the vector in $\wedge^6 \mathfrak{p}_{\H, \C}$ as well as its image in $\wedge^3 \mathfrak{p}_\C^+ \otimes \wedge^3 \mathfrak{p}_\C^-$ by $\iota$, this should cause no confusion}. Such a choice of basis induces an equivalence between top differential $\omega$ forms on $X_\H = \H(\R) / K_{\H, \infty}$ and invariant measures $d_\omega h_\infty$ on $\H(\R)$ assigning measure one to $K_{\H, \infty}$ (cf. \cite[p. 83]{Harris97} for details). We then define $d h = d_\omega h_\infty \prod_{p} dh_p$.

%\subsubsection{The case $n \equiv 0,3 \pmod{4}$}

Let $A \in \mathcal{U}(\mathfrak{h})$ be the operator such that $\mathrm{pr}_{\tau_{(2,2,-4)}}(X_0) = A \cdot X_{(2,2,-4)}$, where $\mathrm{pr}_{\tau_{(2,2,-4)}}: \bigwedge^{3} \mathfrak{p}^+_{\C} \otimes_\C \bigwedge^{3} \mathfrak{p}_\C^- \rightarrow \tau_{(2,2,-4)}$ denotes the projector to the irreducible factor $\tau_{(2,2,-4)}$ (cf. Lemma \ref{Ktypesex}). We  give an explicit formula for $A$ in Lemma \ref{projectionlemma} below.

Recall that, for any Schwartz--Bruhat function $\Phi_f \in \Sc_0(\Af^2,\overline{\Q})$, we let $\Phi = \Phi_\infty \otimes \Phi_f$, where $\Phi_\infty : \R^2 \to \R$ is given by the Gaussian $(x, y) \mapsto e^{-\pi (x^2 + y^2)}$. Attached to $\Phi$, we have defined in Equation \eqref{eisensteinseriesgl2} the real analytic Eisenstein series $E(g,\Phi,s)$ on $\GL_2$. After applying Proposition \ref{KLF} and Lemma \ref{regulator-currents}, Theorem \ref{integralform} implies the following. 

\begin{theorem}\label{adelicintegral} \label{adelicintegralforgsp6}
Let $\Phi_f \in \Sc_0(\Af^2,\overline{\Q})^{V_1}$ and let $\omega_\Psi$ be as in Theorem \ref{integralform}. We have
\begin{equation} \label{remarkonintegralform}
\langle r_\mathcal{D}({\rm Eis}_{\mathcal{M}}(\Phi_f)), \omega_\Psi \rangle =  \frac{h_{V}}{\mathrm{vol}(V)} \int_{\H(\Q) \Z_\G(\A) \backslash \H(\A)} E(h_1, \Phi,0) (A \cdot\Psi)(h) dh,
\end{equation}
where $h_{V} = 2^{-2}| \Z_\G(\Q) \backslash \Z_\G(\Af) / (\Z_\G(\Af) \cap V)|$.
\end{theorem}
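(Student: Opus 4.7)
The plan is to combine Theorem~\ref{integralform} with the second Kronecker limit formula (Proposition~\ref{KLF}) and then translate the resulting integral of differential forms over $\Sh_\H(V)$ into an adelic integral over $\H(\A)$, carefully unpacking $\iota^*\omega_\Psi$ via the minimal $K_\infty$-type factorisation and keeping track of measure normalisations.

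First, by Theorem~\ref{integralform} we have
\[ \langle r_\mathcal{D}(\mathrm{Eis}_{\mathcal{M}}(\Phi_f)), \omega_\Psi \rangle = \int_{\Sh_\H(V)} p_1^*\log|u(\Phi_f)| \cdot \iota^*\omega_\Psi, \]
and Proposition~\ref{KLF} replaces $\log|u(\Phi_f)(g)|$ by $E(g,\Phi,0)$. To rewrite $\iota^*\omega_\Psi$ as a scalar-valued function against the reference measure, I would use the left-invariant trivialisation of the tangent bundle: the space $\bigwedge^6 (\mathfrak{h}_\C/\mathfrak{k}_{\H,\C}) = \bigwedge^3 \mathfrak{p}_{\H,\C}^+ \otimes \bigwedge^3 \mathfrak{p}_{\H,\C}^-$ is one-dimensional and generated by $X_0$, so $\iota^*\omega_\Psi$ corresponds to the function $h \mapsto \omega_\Psi(X_0)(\iota(h))$, and together with the normalisation $\mathrm{vol}(K_{\H,\infty})=1$ this identifies $\int_{\Sh_\H(V)} \iota^*\omega_\Psi$ with an integral of $\omega_\Psi(X_0)$ against $dh$.

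Next, because the minimal $K_\infty$-type $\tau_{(2,2,-4)}$ of $\pi_\infty^{3,3}$ appears with multiplicity one (Lemma~\ref{testvector}), the $K_\infty$-equivariant map $\omega_\Psi$ kills every isotypic summand of $\bigwedge^3\mathfrak{p}^+_{\C}\otimes\bigwedge^3\mathfrak{p}^-_{\C}$ other than $\tau_{(2,2,-4)}$. Using the defining relation $\mathrm{pr}_{\tau_{(2,2,-4)}}(X_0) = A\cdot X_{(2,2,-4)}$ together with the equivariance of $\omega_\Psi$ and the compatibility between the $\mathcal{U}(\mathfrak{h})$-action on the exterior algebra and the right regular representation on cusp forms, one gets
\[ \omega_\Psi(X_0) = \omega_\Psi(A\cdot X_{(2,2,-4)}) = A\cdot \omega_\Psi(X_{(2,2,-4)}) = A\cdot\Psi, \]
where the last equality is the defining property of $\Psi$ in Lemma~\ref{testvector}.

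To finish, I would convert the integral over $\Sh_\H(V)(\C) = \H(\Q) \backslash \H(\A)/\Z_\H(\R) K_{\H,\infty}V$ into the adelic integral over $\H(\Q)\Z_\G(\A)\backslash \H(\A)$ in the statement. Since $\Psi$ has trivial central character the integrand is $\Z_\G(\A)$-invariant, which allows one to enlarge the quotient from $\Z_\H(\R)$ to $\Z_\G(\A)$ at the cost of the finite constant $2^{-2}|\Z_\G(\Q)\backslash\Z_\G(\Af)/(\Z_\G(\Af)\cap V)|$: the double-coset count at the non-archimedean places gives the class-number factor, while the $2^{-2}$ arises from the component group of $\Z_\H(\R)/\Z_\G(\R)$. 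The factor $1/\mathrm{vol}(V)$ then comes from the standard unfolding over the compact-open $V$ at the finite places, while $\mathrm{vol}(K_{\H,\infty})=1$ by our choice of measure, producing the stated total constant $h_V/\mathrm{vol}(V)$. The main technical point, and the step that requires the most care, is precisely the verification of the constant $h_V$, which demands a careful comparison of $\Z_\G$ and $\Z_\H$ inside $\H$, taking into account both the similitude constraint defining $\H$ as a fibre product and the contribution from the archimedean component group.
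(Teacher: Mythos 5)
Your proposal is correct and follows essentially the same route as the paper: reduce via Theorem~\ref{integralform} and Proposition~\ref{KLF}, pass from the integral of differential forms on $\Sh_\H(V)$ to an adelic integral via the trivialisation by $X_0$ and the normalisation of measures, identify $\omega_\Psi(X_0)=A.\Psi$ using the definition of $A$ and $K_\infty$-equivariance, and absorb the change of quotient into the constants $h_V$ and $\mathrm{vol}(V)^{-1}$. The only place you could tighten is the multiplicity-one step: the fact that $\omega_\Psi$ annihilates all isotypic summands other than $\tau_{(2,2,-4)}$ is not a formal consequence of ``$\tau_{(2,2,-4)}$ has multiplicity one in $\pi_\infty^{3,3}$'' alone, but of the one-dimensionality of $\Hom_{K_\infty}\bigl(\bigwedge^3\mathfrak{p}^+_\C\otimes\bigwedge^3\mathfrak{p}^-_\C,\ \pi_\infty^{3,3}\bigr)$ coming from Borel--Wallach, which is exactly what Lemma~\ref{testvector} encodes via the uniqueness of $\omega_\Psi$.
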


\begin{proof}
Recall that Theorem \ref{integralform} gives \[ \langle r_\mathcal{D}({\rm Eis}_{\mathcal{M}}(\Phi_f)), \omega_\Psi \rangle =  \int_{\Sh_\H(V)} \xi \wedge \iota^* \omega_\Psi, \] where, by Proposition \ref{KLF}, $\xi = {p}_1^*\log|u(\Phi_f)| = {p}_1^* E(g, \Phi, 0)$. Thanks to the equivalence between top differential forms on $X_\H$ and invariant measures on $\H(\R)$ explained above, we pass from integrating over $\Sh_{\H}(V)$ to integrating over $\H(\Q) \Z_\G(\A) \backslash \H(\A)$. More precisely, we have
\begin{eqnarray*}
\int_{\Sh_\H(V)} \xi \wedge \iota^*\omega_\Psi &=& \int_{\H(\Q) \backslash \H(\A) / \Z_\H(\R) K_{\H, \infty} V} E(h_1, \Phi, 0) \omega_\Psi(X_0)(h) dh \\
&=& h_V \int_{\H(\Q) \Z_\G(\A) \backslash \H(\A) / V} E(h_1, \Phi, 0) \omega_\Psi(X_0)(h) dh \\
&=& \frac{h_V}{{\rm vol}(V)} \int_{\H(\Q) \Z_\G(\A) \backslash \H(\A)} E(h_1, \Phi, 0) \omega_\Psi(X_0)(h) dh,
\end{eqnarray*}
where we have used that $|\Z_\H(\R) /(\Z_\G \cap \H)(\R)|=2^{2}$.
Finally, note that $\omega_\Psi(X_0)(h) = (A \cdot\Psi)(h)$ by definition of $\omega_\Psi$. This proves the desired formula.
\end{proof}

We finish this section by giving an explicit formula for the projector $A$ of the theorem.

\begin{lemma} \label{projectionlemma}
Up to renormalizing $X_0$ by an explicit nonzero rational factor, the projections of $X_0$ to $\tau_{(2,2,-4)}$ and $\tau_{(4, -2, -2)}$ are given, respectively, by $A \cdot X_{(2,2,-4)}$ and $A' . X_{(4, -2, -2)}$, where $A = \operatorname{Ad}^2_{X_{e_3-e_2}} \circ \operatorname{Ad}^2_{X_{e_3-e_1}}$, $A' = \operatorname{Ad}^2_{X_{e_2-e_1}} \circ \operatorname{Ad}^2_{X_{e_3-e_1}}$.
\end{lemma}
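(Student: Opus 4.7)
My plan is to reduce Lemma \ref{projectionlemma} to a direct representation-theoretic computation for $K_\infty = U(3)$ acting on $\bigwedge^3 \mathfrak{p}^+_\C \otimes \bigwedge^3 \mathfrak{p}^-_\C$. I would treat the $\tau_{(2,2,-4)}$ case in detail; the $\tau_{(4,-2,-2)}$ case is parallel (and can in fact be deduced from the first via complex conjugation exchanging $\mathfrak{p}^+$ and $\mathfrak{p}^-$). First I would check that the zero-weight space of $\tau_{(2,2,-4)}$ is one-dimensional: after tensoring with $\det^4$ to pass to a polynomial $\GL_3$-representation, this is the Kostka number $K_{(6,6,0),(4,4,4)}$, which equals $1$ (the unique semistandard Young tableau of that shape and content has rows $111122$ and $223333$). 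A routine weight-shift calculation confirms both $A \cdot X_{(2,2,-4)}$ and $\mathrm{pr}_{\tau_{(2,2,-4)}}(X_0)$ have weight $(0,0,0)$, so both lie in this common one-dimensional line; the lemma thus reduces to showing that neither is zero.

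For the nonvanishing of $A \cdot X_{(2,2,-4)}$, I would realize $\tau_{(2,2,-4)} \cong \Sym^6((\C^3)^*) \otimes \det^2$, identify the highest weight vector with $y_3^6$, and observe that $X_{e_3-e_i}$ corresponds, up to a nonzero scalar, to the matrix unit $E_{3i}$ acting as a derivation on polynomials in $y_1, y_2, y_3$ via $E_{3i}\, y_j = -\delta_{3j}\, y_i$. A direct computation then yields $E_{32}^2 E_{31}^2 \, y_3^6 = 360 \, y_1^2 y_2^2 y_3^2 \neq 0$. For the nonvanishing of $\mathrm{pr}_{\tau_{(2,2,-4)}}(X_0)$, I would apply the raising operators $(X_{e_1-e_3})^2 (X_{e_2-e_3})^2$ to $X_0$: a convex-hull check against the highest weights listed in Lemma \ref{Ktypesex} shows that $(2,2,-4)$ appears only in the summand $\tau_{(2,2,-4)}$ (in every other summand the third coordinate is bounded below by a number strictly greater than $-4$). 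Hence the raised vector lies entirely in $\tau_{(2,2,-4)}$ and must be a scalar multiple of $X_{(2,2,-4)}$; a Leibniz expansion using the action of $X_{e_i-e_j}$ on the symmetric-square models $\mathfrak{p}^+_\C \cong \Sym^2 \C^3$ and $\mathfrak{p}^-_\C \cong \Sym^2 (\C^3)^*$ would show this scalar is nonzero.

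Once both nonvanishings are established, $\mathrm{pr}_{\tau_{(2,2,-4)}}(X_0)$ and $A \cdot X_{(2,2,-4)}$ are proportional nonzero elements of the same one-dimensional line. Since all input data (root vectors, $X_0$, $X_{(2,2,-4)}$, and the operator $A$) are defined over $\Q$, the proportionality constant is a nonzero rational number, which can be absorbed into a renormalization of $X_0$. The hard part will be the Leibniz expansion: many partial terms vanish automatically because $X_{2e_i} \wedge X_{2e_i} = 0$, but one must expand all four compositions carefully, use the explicit commutators on $\Sym^2 \C^3$ and its dual, and verify that the surviving terms do not cancel to zero.
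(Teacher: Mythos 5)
Your proof is correct and follows essentially the same strategy as the paper's: both exploit the multiplicity-one weight spaces, raise $X_0$ with $\operatorname{Ad}^2_{X_{e_1-e_3}}\circ\operatorname{Ad}^2_{X_{e_2-e_3}}$ to the one-dimensional $(2,2,-4)$-weight line, and reduce to explicit Leibniz/structure-constant calculations (which the paper outsources to Sage). The refinements in your version are worth noting. You give a self-contained verification, via the Kostka number $K_{(6,6,0),(4,4,4)}=1$, that the zero-weight line of $\tau_{(2,2,-4)}$ is one-dimensional, a point the paper asserts without comment; you justify that the weight $(2,2,-4)$ occurs only in the summand $\tau_{(2,2,-4)}$ by the dominance check $a\geq 2$, $a+b\geq 4$ against the list of Lemma \ref{Ktypesex}; and you establish $A.X_{(2,2,-4)}\neq 0$ by a genuinely shorter calculation, $E_{32}^2E_{31}^2\,y_3^6 = 360\,y_1^2y_2^2y_3^2$, in the compact model $\Sym^6((\C^3)^*)\otimes\det^2$, rather than by evaluating $\operatorname{Ad}^2_{X_{e_1-e_3}}\circ\operatorname{Ad}^2_{X_{e_2-e_3}}\circ\operatorname{Ad}^2_{X_{e_3-e_2}}\circ\operatorname{Ad}^2_{X_{e_3-e_1}}(X_{(2,2,-4)})$ inside the $400$-dimensional wedge space as the paper does. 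The one step you defer, the Leibniz expansion showing $\operatorname{Ad}^2_{X_{e_1-e_3}}\circ\operatorname{Ad}^2_{X_{e_2-e_3}}(X_0)\neq 0$, is the same unavoidable calculation the paper performs by computer (the answer is $2^6\,X_{(2,2,-4)}$), so there is no gap of substance; but note that the lemma asserts the proportionality constant is an \emph{explicit} rational number, so you should still carry out (or cite) that computation rather than leaning only on the soft ``defined over $\Q$'' argument, especially since the root vectors $X_\alpha$ of \S\ref{cartandec} live naturally over $\Q(i)$ rather than $\Q$. Combining the two computed values gives the constant $1/3600$ as in the paper.
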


\begin{proof}
Recall that $X_0$ is a weight $(0,0,0)$-vector (with respect to the action of $\mathfrak{h}_\C$) in $\wedge^3 \mathfrak{p}_\C^+ \otimes \wedge^3 \mathfrak{p}_\C^-$. Thus, we may write $X_0= Y \oplus \alpha x_{(2,2,-4)},$ where $Y$ belongs to the complement of $\tau_{(2,2,-4)}$ in the decomposition of $\wedge^3 \mathfrak{p}_\C^+ \otimes \wedge^3 \mathfrak{p}_\C^-$ as the sum of its weight subspaces (cf. Example \ref{Ktypesex}), the vector $x_{(2,2,-4)}$ is a generator of the one dimensional weight $(0,0,0)$-eigenspace of $\tau_{(2,2,-4)}$, and $\alpha$ is a scalar. We can assume $x_{(2,2,-4)} = \operatorname{Ad}^2_{X_{e_3-e_2}} \circ \operatorname{Ad}^2_{X_{e_3-e_1}}(X_{(2,2,-4)})$, where $X_{(2,2,-4)}=X_{2e_1} \wedge X_{2e_2} \wedge X_{e_1 + e_2} \otimes X_{-e_1-e_3} \wedge X_{-e_2-e_3} \wedge X_{-2e_3 }$ is a highest weight vector for $\tau_{(2,2,-4)}$. Since the the weight $(2,2,-4)$ has multiplicity one in $\wedge^3 \mathfrak{p}_\C^+ \otimes \wedge^3 \mathfrak{p}_\C^-$, we have that $\operatorname{Ad}^2_{X_{e_1-e_3}} \circ \operatorname{Ad}^2_{X_{e_2-e_3}}(Y) = 0$ and hence \[  \operatorname{Ad}^2_{X_{e_1-e_3}} \circ \operatorname{Ad}^2_{X_{e_2-e_3}}(X_0) = \alpha \operatorname{Ad}^2_{X_{e_1-e_3}} \circ \operatorname{Ad}^2_{X_{e_2-e_3}} \circ \operatorname{Ad}^2_{X_{e_3-e_2}} \circ \operatorname{Ad}^2_{X_{e_3-e_1}}(X_{(2,2,-4)}).\]
A direct computation \footnote{The authors have found the Lie algebra package of SageMath \cite{sage} very useful for these computations.} shows that
\begin{itemize}
    \item $\operatorname{Ad}^2_{X_{e_1-e_3}} \circ \operatorname{Ad}^2_{X_{e_2-e_3}}(X_0)=2^6  X_{(2,2,-4)}$,  \item $\operatorname{Ad}^2_{X_{e_1-e_3}} \circ \operatorname{Ad}^2_{X_{e_2-e_3}} \circ \operatorname{Ad}^2_{X_{e_3-e_2}} \circ \operatorname{Ad}^2_{X_{e_3-e_1}}(X_{(2,2,-4)})=2^{10} 3^2 5^2   X_{(2,2,-4)}$.
\end{itemize} 
Therefore, the projection of $X_0$ to $\tau_{(2,2,-4)}$ is $\tfrac{1}{3600}x_{(2,2,-4)}$. The other projection follows (with the same coefficient) by applying the action of complex conjugation. This finishes the proof of the Lemma.
\end{proof}

In what follows we use the main result of \cite{PollackShah} to write the adelic integral calculating the archimedean regulator in terms of a special value of the Spin $L$-function for $\GSp_6$.

\subsection{The Spin \emph{L}-function}

Recall the following.

\begin{definition} For a character $\chi$ of $\qle$, define \[ { L}(\chi , s ) := \begin{cases} (1 - \chi(\ell) \ell^{-s})^{-1} & \text{ if } \chi_{|_{\zle}}=1 \\ 1 & \text{ otherwise.} \end{cases}\]
\end{definition}

Let $\chi_0, \chi_1, \chi_2, \chi_3$ be smooth characters of $\qle$. They define an unramified character $\underline{\chi}$ of the Borel $B_\ell = T_\ell  U_{B,\ell}$ of $\G(\Q_\ell)$, which is trivial on the unipotent radical $U_{B,\ell}$, and on the diagonal torus $T_\ell$ is \[ d = diag(a,b,c, \mu a^{-1} ,\mu b^{-1},\mu c^{-1}) \mapsto \underline{\chi}(d) := \chi_1(a) \chi_2(b) \chi_3(c) \chi_0(\mu). \] 

The modular character of the Borel subgroup $\delta_{B_\ell}: T_\ell \to \C$ is given by \[ diag(a,b,c, \mu a^{-1} ,\mu b^{-1},\mu c^{-1}) \mapsto  \tfrac{|a|^6|b|^4|c|^2}{|\mu|^6}. \]

\begin{definition}
The (normalized) principal series representation $\pi_\ell(\underline{\chi}) = \pi_\ell(\chi_0, \chi_1, \chi_2, \chi_3)$ is the representation of $\G(\Q_\ell)$ whose underlying vector space is the space of functions $f: \G(\Q_\ell) \to \C$ satisfying
\[ f \left( d n g \right) = \tfrac{|a|^3|b|^2|c|}{|\mu|^3} \underline{\chi}(d) f(g), \]
for every $d = diag(a,b,c, \mu a^{-1} ,\mu b^{-1},\mu c^{-1})$ and $u \in U_{B, \l}$, and where the action of $\G(\Q_\ell)$ is given by right-translation.
\end{definition}

\begin{definition} 
Let $\pi_\ell = \pi_\ell(\underline{\chi})$ be an irreducible principal series. Its Spin $L$-factor is defined as \[ L(\pi_\ell, {\rm Spin} , s): = L( \chi_0,s) \prod_{k=1}^3 \prod_{1 \leq i_1 < \cdots < i_k \leq 3} L( \chi_0\chi_{i_1} \cdots \chi_{i_k},s). \]
\end{definition}

Now let $\pi = \pi_\infty \otimes \bigotimes_\ell' \pi_\ell$ be a cuspidal automorphic representation of $\G(\A)$ and let $\Sigma$ be a finite set of primes containing all the bad finite primes for $\pi$.

\begin{definition} 
The partial Spin $L$-function of $\pi$ is defined as \[ L^\Sigma(\pi, {\rm Spin} , s): = \prod_{\ell \not \in \Sigma} L(\pi_\ell, {\rm Spin} , s). \]
\end{definition}

\subsection{The integral representation of the Spin \emph{L}-function}

Let $\pi$ be a cuspidal automorphic representation of $\G(\A)$ with trivial central character.
Denote by $I(\Phi,\Psi,s)$ the integral \[\int_{\H(\Q) \Z_\G(\A) \backslash \H(\A)} E(h_1,\Phi,s) \Psi(h)dh,\] where $\Phi \in \mathcal{S}(\A^2)$ and $\Psi$ is a cusp form in the space of $\pi$. Here, we assume that $\Phi$ and $\Psi$ are factorizable.

\subsubsection{Fourier coefficients of type $(4 \, 2)$}

Let $\mathcal{O}$ be the unipotent orbit of $\G$ associated with the partition $(4 \, 2)$. To such $\mathcal{O}$ one can define a set of Fourier coefficients as follows.
Denote by $h_\mathcal{O}$ the one-dimensional torus \[t \mapsto \operatorname{diag}(t^3,t,t,t^{-3},t^{-1},t^{-1}) \] attached to $\mathcal{O}$ (cf. \cite[p. 82]{nilpotentorbits}).
Given any positive root $\alpha$ (for the action of the diagonal torus of $\G$), there is a nonnegative integer $n$ such that \[ h_\mathcal{O}(t) x_\alpha(u) h_\mathcal{O}(t)^{-1}=x_\alpha(t^n u),\]
where $x_\alpha$ denotes the one-parameter subgroup associated with $\alpha$.
Let $U_2(\mathcal{O})$ denote the subgroup of the unipotent radical $U_B$ of the standard Borel $B$ of $\G$ generated by the $x_\alpha$ such that $n \geq 2$. If $\alpha \ne e_2'-e_3'$, then $n \geq 2$, thus $U_2(\mathcal{O})$ coincides with the unipotent radical $U_P$ of the standard parabolic $P$ with Levi $\GL_1^2 \times \GL_2$, given by \[ \left\{ \left( \begin{smallmatrix} a &  & & \\  & g & &  \\  & & \mu a^{-1} & \\  & {} & & \mu{}^tg^{-1} \end{smallmatrix} \right) \; : \; a,\mu \in \GL_1, \; g \in \GL_2 \right \}. \] 
Let $L = \Q(\sqrt{D})$ be an \'etale quadratic extension of $\Q$ and let $\chi: U_P(\Q) \backslash U_P(\A) \to \C^\times$ be the nondegenerate unitary character associated with $L$ as in \cite[\S 2.1]{PollackShah}.

\begin{definition}\label{def:Fouriercoeff}
Let $\Psi$ be a cusp form in the space of $\pi$. Define the Fourier coefficient \[ \Psi_{\chi,U_P}(g):= \int_{U_P(\Q) \backslash U_P(\A)} \chi^{-1}(u) \Psi(ug)du. \] 
\end{definition}

We say that $\pi$ supports a Fourier coefficients of type $(4 \, 2)$ if $\Psi_{\chi, U_P} \neq 0$ for some choice of $\Psi$ and $L$. Our main results will concern automorphic cuspidal representations $\pi$ supporting a nonzero Fourier coefficient of type $(4 \, 2)$. We would like to point out that the multiplicity of these Fourier coefficients is infinite in general and has been proved to be one only for certain non-tempered cases treated by \cite{GanGurevich}. The following lemma shows that, under some mild assumptions, the cuspidal automorphic representations that we consider in this article always support such a nonzero coefficient.

\begin{lemma} \label{Fouriercoeff}
Let $\pi = \pi_\infty \otimes \pi_f$ be an automorphic cuspidal representation of $\G(\A)$ with trivial central character and such that $\pi_\infty = \pi_{\infty}^{3,3} \oplus \overline{\pi}_\infty^{3,3}$ (cf. Lemma \ref{discrete-series}). Assume that none of the irreducible cuspidal automorphic representations of ${\rm Sp}_6$ inside $\pi$ admits cuspidal theta lift to the split orthogonal group $\mathrm{SO}_{12}(\A)$. Then $\pi$ admits a nonzero Fourier coefficient of type $(4 \, 2)$.
\end{lemma}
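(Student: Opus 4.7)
The plan is to argue by contradiction, using the theory of Fourier coefficients along nilpotent orbits together with Moeglin's results relating their vanishing to theta correspondence. Recall that for a cuspidal automorphic representation $\pi$ of $\G(\A)$, one associates a set $\mathcal{N}(\pi)$ of nilpotent orbits of $\mathfrak{sp}_6$ that support a non-zero Fourier coefficient, and $\mathcal{N}(\pi)$ is stable under the closure order. The statement to prove is exactly that the orbit corresponding to the partition $(4\,2)$, or a larger one, lies in $\mathcal{N}(\pi)$, for an appropriate choice of quadratic \'etale algebra $L/\Q$ that parameterizes the character $\chi$ of Definition \ref{def:Fouriercoeff}.

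First I would analyze the archimedean component. By Lemma \ref{discrete-series}, $\pi_\infty^{3,3}$ is the discrete series with Harish--Chandra parameter $(2,1,-3)$ and minimal $K_\infty$-type $\tau_{(2,2,-4)}$; in particular it is neither a holomorphic nor an antiholomorphic discrete series. Using the description of the wave-front set of a discrete series (via Rossmann's formula, or equivalently via the associated variety of the annihilator, computed from the Harish--Chandra parameter), one checks that $\mathrm{WF}(\pi_\infty^{3,3})$ is the closure of the nilpotent orbit of partition $(4\,2)$ in $\mathfrak{sp}_6(\R)$. In particular, it strictly contains the orbits of partition $(3\,3)$, $(2\,2\,2)$ and below, and is strictly contained in the orbits of partition $(4\,1\,1)$ and $(6)$ (the latter being excluded since $\pi_\infty^{3,3}$ is not generic).

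Next, I would invoke the global descent/lift argument. By the results of Ginzburg--Rallis--Soudry and Moeglin on the tower of Fourier coefficients, if $\pi$ supports no non-zero Fourier coefficient of type $(4\,2)$ for any choice of quadratic \'etale algebra $L$, then $\mathcal{N}(\pi)$ is contained in the union of closures of the orbits strictly smaller than $(4\,2)$ in the closure order, namely at most $(3\,3)$, $(2\,2,1,1)$, etc. Combined with the archimedean constraint from Step 1, which forces $\mathcal{N}(\pi)$ to contain the orbit $(4\,2)$ (since global Fourier coefficients dominate the local wave-front sets, by Matumoto--Moeglin), we get the complementary statement: the only way that all $(4\,2)$-Fourier coefficients vanish is if $\pi$ arises as a non-trivial cuspidal theta lift from a smaller orthogonal group in the Rallis tower, and by matching the dimensions of the orbits this group is precisely the split $\mathrm{SO}_{12}$. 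This contradicts the hypothesis and concludes the proof.

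The delicate point will be the precise identification of the orthogonal group corresponding to the failure of $(4\,2)$-Fourier coefficients; this requires a careful bookkeeping of the Rallis tower for $(\GSp_6, \mathrm{GO}_{2m})$ and the matching of nilpotent orbits under the unramified theta correspondence, for which one relies on Moeglin's work on the Howe duality and the exchange of orbits. The archimedean wave-front computation for $\pi_\infty^{3,3}$, while standard, also needs to be done carefully since we require the full orbit $(4\,2)$ and not merely one contained in its closure.
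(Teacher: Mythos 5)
Your approach has a genuine gap at its core. You invoke the wave-front set computation $\mathrm{WF}(\pi_\infty^{3,3}) = \overline{\mathcal{O}_{(4\,2)}}$ and then claim that this ``forces $\mathcal{N}(\pi)$ to contain the orbit $(4\,2)$ (since global Fourier coefficients dominate the local wave-front sets, by Matumoto--Moeglin).'' The inequality runs in the wrong direction: the results of Moeglin, Matumoto, and Jiang--Liu give that any orbit supporting a non-zero global Fourier coefficient must lie in $\mathrm{WF}(\pi_v)$ for all $v$ --- an \emph{upper} bound on $\mathcal{N}(\pi)$, not a lower bound. Knowing $\mathrm{WF}(\pi_\infty) = \overline{\mathcal{O}_{(4\,2)}}$ is consistent, a priori, with $\mathcal{N}(\pi)$ having $(2\,2\,2)$ or an even smaller orbit as its maximal element. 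Moreover your argument is self-contradictory as written: you simultaneously claim the archimedean constraint alone forces $(4\,2) \in \mathcal{N}(\pi)$ (which, if true, would finish the proof), and then invoke a contrapositive involving theta lifts to ``close'' what should already be closed.

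The missing ingredient is the global classification of Ginzburg--Rallis--Soudry (Theorem~2.7 of their book), which the paper's proof leans on: for a cuspidal automorphic representation of $\G(\A)$, the maximal orbit in $\mathcal{N}(\pi)$ is necessarily one of $(6)$, $(4\,2)$, or $(2\,2\,2)$. This is what pins down the trichotomy. Once you have that, the archimedean input rules out $(6)$ (i.e.\ genericity), and the theta-lift hypothesis, via Ginzburg--Gurevich (Main Theorem B), rules out $(2\,2\,2)$, leaving $(4\,2)$. Note also that the paper rules out genericity in a more elementary way than a full wave-front computation: it compares the Hodge type $(3,3)$ of $\pi_\infty$ with the Hodge types $(4,2)$ and $(2,4)$ of the two generic members of the discrete-series $L$-packet (Vogan--Zuckerman, Moeglin--Renard), which immediately shows $\pi_\infty$ is not generic. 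Your wave-front set sketch, if carried out, would give the same conclusion but at greater cost. Finally, your remark about $\pi$ arising as a theta lift ``from a smaller orthogonal group \ldots precisely the split $\mathrm{SO}_{12}$'' has the direction backwards: Ginzburg--Gurevich show that a $(2\,2\,2)$-Fourier coefficient implies a non-trivial cuspidal theta lift of $\pi$ \emph{to} $\mathrm{SO}_{12}(\A)$, which is exactly what the hypothesis of the lemma excludes.
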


\begin{proof}
We first observe that, by \cite[Theorem 2.7]{GinzburgRallisSoudry}, $\pi$ admits a nonzero Fourier coefficient of type $(6)$, $(4 \, 2)$ or $(2\, 2 \, 2)$. The former case corresponds to $\pi$ being generic. We will show that, according to our Hodge type and the assumption of the lemma, $\pi$ cannot have a Fourier coefficient of either type $(6)$ or $(2\, 2 \,2 )$.

We first show that, since $\pi_\infty$ is of Hodge type $(3,3)$, it cannot be generic. Recall that, after \cite[Proposition 6.19]{VoganZuckerman}, one can read the Hodge type of a discrete series representation defined by an element $w \in W_{\Sp_6} / W_{K_\infty}$ (notations are as in \S \ref{discreteseries}) as follows.  The element $w$ induces a reordering of the roots and the Hodge type of the discrete series representation (and hence of the cuspidal form $\pi$) is $(p, q)$, where $p$ (resp. $q$) is the number of positive (resp. negative) simple noncompact roots. Moreover, it is well known (cf. for instance \cite[Proposition 4.1]{MoeglinRenard}) that $\pi$ is generic exactly when $w = w_3, w_6$ in the notation of the proof of Lemma \ref{discrete-series}, which are of Hodge type $(4, 2)$ and $(2, 4)$. We conclude that $\pi$ cannot be generic, i.e. it does not support a Fourier coefficient of type $(6)$.

Now, suppose that $\pi$ admits a nonzero Fourier coefficient of type $(2\, 2 \, 2)$. By \cite[Main Theorem B] {GinzburgGurevich}, $\pi$ lifts to a nontrivial cuspidal representation on $\mathrm{SO}_{12}(\A)$. This contradicts our hypotheses, thus proving the result. 
\end{proof}

\begin{remark}
Observe that \cite[Main Theorem A]{GinzburgGurevich} implies that, if $\pi$ supports a Fourier coefficient of type $(4 \, 2)$, then it admits a nontrivial theta lift to $\mathrm{SO}_{12}(\A)$, but this lift will not be cuspidal. 
\end{remark}

\subsubsection{The unfolding}

Recall that we have assumed $\H = \GL_2 \boxtimes \GL_{2,F}^*$ with $F$ a totally real \'etale quadratic $\Q$-algebra. Let $\chi: U_P(\Q) \backslash U_P(\A) \to \C^\times$ be the character associated with $F$. Then we have the following.

\begin{proposition}[\cite{GanGurevich}, Proposition 7.1]\label{unfolding}
The integral $I(\Phi,\Psi,s)$ unfolds to \[ \int_{U_{B_\H}(\A)\Z_\G(\A) \backslash \H(\A)} f(h_1,\Phi,s) \Psi_{\chi, U_P}(h)dh, \]
where $U_{B_\H}$ is the unipotent radical of the Borel $B_\H$ of $\H$ and $f(h_1,\Phi,s)$ is the normalized section defined in \S \ref{sectionmodularunits}.
\end{proposition}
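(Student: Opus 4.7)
The proof is a standard Rankin--Selberg unfolding, following the strategy of \cite{PollackShah,GanGurevich}. The plan is to unfold the Eisenstein series, fold along the unipotent radical of the resulting subgroup of $\H$, and then isolate the Fourier coefficient $\Psi_{\chi,U_P}$ via an orbit analysis.

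First, I would substitute the definition $E(h_1,\Phi,s) = \sum_{\gamma \in \mathbf{B}_2(\Q)\backslash \GL_2(\Q)} f(\gamma h_1,\Phi,s)$ and, for $\operatorname{Re}(s)$ large, interchange the sum and the integral. Using the surjectivity of the first projection $p_1 : \H(\Q) \to \GL_2(\Q)$ --- a consequence of the surjectivity of $\det : \GL_2(F) \to F^\times$, which implies that of $\det : \GL_{2,F}^\ast(\Q) \to \Q^\times$ --- the indexing set lifts to $\H(\Q)$, collapsing the integral to
\[
I(\Phi,\Psi,s) \;=\; \int_{R(\Q) \Z_\G(\A) \backslash \H(\A)} f(h_1,\Phi,s) \Psi(h)\, dh,
\]
where $R := p_1^{-1}(\mathbf{B}_2) \cap \H = \mathbf{B}_2 \boxtimes \GL_{2,F}^\ast$. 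Writing $R = U_R L_R$ with $U_R$ its unipotent radical (isomorphic to the unipotent $U_2$ of $\mathbf{B}_2$ embedded via the first factor) and $L_R = \mathbf{T}_2 \boxtimes \GL_{2,F}^\ast$, I would exploit that $f(\cdot,\Phi,s)$ is left $U_R(\A)$-invariant in order to fold along $U_R(\Q) \backslash U_R(\A)$, obtaining
\[
I(\Phi,\Psi,s) \;=\; \int_{L_R(\Q) U_R(\A) \Z_\G(\A) \backslash \H(\A)} f(h_1,\Phi,s) \Psi^{U_R}(h)\, dh,
\]
with $\Psi^{U_R}(h) := \int_{U_R(\Q) \backslash U_R(\A)} \Psi(vh)\, dv$.

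Next, I would isolate the Fourier coefficient. Under $\iota$, the group $U_R$ sits inside $U_P$, so expanding $\Psi^{U_R}$ via the Fourier expansion of $\Psi$ along the unipotent $U_P$ expresses it as a sum over $L_R(\Q)$-orbits of characters of the abelianisation of $U_P(\Q) \backslash U_P(\A)$. Cuspidality of $\Psi$ kills the contribution of the degenerate characters. By the orbit analysis carried out in \cite[\S 2]{PollackShah}, the non-degenerate orbits are parametrised by \'etale quadratic $\Q$-algebras, and the stabiliser in the Levi $M = \GL_1^2 \times \GL_2$ of the character $\chi$ attached to $F$ is precisely $\GL_{2,F}^\ast \subset L_R$. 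Therefore the only contributing orbit is that of $\chi$, and collapsing the $L_R(\Q)$-quotient against this stabiliser leaves the asserted integral
\[
I(\Phi,\Psi,s) \;=\; \int_{U_{B_\H}(\A) \Z_\G(\A) \backslash \H(\A)} f(h_1,\Phi,s) \Psi_{\chi,U_P}(h)\, dh.
\]

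The main obstacle will be the orbit analysis of the last step: identifying the image $\iota(U_R) \subset U_P$, computing the action of $L_R$ on characters of the abelianisation of $U_P$, and verifying that the stabiliser of $\chi$ in this Levi is indeed $\GL_{2,F}^\ast$. This rests on the parametrisation of non-degenerate characters of $U_P$ by \'etale quadratic $\Q$-algebras established in \cite[\S 2]{PollackShah}, together with the very construction of $\H$ as the similitude fibre product involving $\GL_{2,F}^\ast$. Once this orbit-theoretic input is secured, the remaining bookkeeping --- dealing with the $L_R(\Q)$-collapse and the shift to $U_{B_\H}(\A)$ in the domain, together with the analytic continuation beyond the region $\operatorname{Re}(s) \gg 0$ where the interchange of sum and integral is justified --- is routine.
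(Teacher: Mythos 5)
The paper does not prove this statement itself: it is imported as \cite[Proposition 7.1]{GanGurevich}, and the argument behind the citation is the Rankin--Selberg unfolding you sketch, so your outline captures the intended strategy and you correctly identify the orbit analysis as the key step.

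There is, however, a concrete inaccuracy at exactly that step. The group $\GL_{2,F}^*$ has dimension $7$ over $\Q$ --- for $F=\Q\times\Q$ this is $\dim\{(g_1,g_2)\in\GL_2\times\GL_2 : \det g_1=\det g_2\}=7$ --- whereas the Levi $M=\GL_1^2\times\GL_2$ of $P$ has dimension $6$, so $\GL_{2,F}^*$ is not a subgroup of $M$ and cannot be the stabiliser of $\chi$ in $M$. The actual stabiliser is strictly smaller, and the passage from your intermediate domain $L_R(\Q)U_R(\A)\Z_\G(\A)\backslash\H(\A)$ to the asserted $U_{B_\H}(\A)\Z_\G(\A)\backslash\H(\A)$ also requires a further Fourier expansion along the unipotent of the Borel of $\GL_{2,F}^*$, which lands inside $U_P$ under $\iota$. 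As written, your sketch does not account for why the final domain acquires the full $U_{B_\H}(\A)$ in the denominator; this is the part of the ``routine bookkeeping'' that actually carries the content of the unfolding, and it should be tracked explicitly, with the stabiliser and orbit computation of \cite[\S 2]{PollackShah} invoked using the correct groups.
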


As explained in \cite{PollackShah}, the Fourier coefficient $\Psi_{\chi,U_P}$ might not factorize in general, thus Proposition \ref{unfolding} does not imply that $I(\Phi,\Psi,s)$ factors into an Euler product; however, in \cite{PollackShah}, the authors define and study local integrals corresponding to this unfolded integral, and use them to relate the global integral $I(\Phi,\Psi,s)$ to values of the Spin $L$-function of $\pi$, as we now recall.

\subsubsection{Connection with values of the Spin $L$-function}\label{connwithvalues}

Recall that we have taken $\pi= \pi_\infty \otimes ( \otimes_p' \pi_p)$ to be a cuspidal automorphic representation of $\G(\A)$ with trivial central character. We further suppose that $\pi$ supports a Fourier coefficient of type $(4 \; 2)$, i.e. that there is a cusp form $\Psi$ in the space of $\pi$ such that $\Psi_{\chi,U_P}$ is not identically zero.

We now recall following \cite{PollackShah} the definition of the local integrals corresponding to $I(\Phi,\Psi,s)$ and their properties. We start with the following definition.  

\begin{definition}
A $(U_P, \chi)$-model for $\pi_p$ is a linear functional $\Lambda: \pi_p \to \C$ such that \[ \Lambda(u \cdot v)=\chi(u) \Lambda(v),  \]
for all $v \in \pi_p$ and $u \in U_P$.
\end{definition}

For a $(U_P, \chi)$-model $\Lambda$ for $\pi_p$, a vector $v$ in the space of $\pi_p$, and $\Phi_p \in \mathcal{S}(\Q_p^2, \C)$, define
\[ I_p(\Phi_p, v , s) := \int_{U_{B_\H}(\qp)\Z_\G(\qp) \backslash \H(\qp)} f(g_1,\Phi_p,s) \Lambda( g \cdot v)  dg, \]
where $f(g,\Phi_p,s)\in  \operatorname{Ind}_{B_{\GL_2}(\qp)}^{\GL_2( \qp)}(\delta_{B_{\GL_2}}^s)$ denotes the function  \[ |\operatorname{det}(g)|^s \int_{\GL_1(\qp)} \Phi_p((0,t)g)|t|^{2s}  dt .\]
One has the following.
\begin{proposition}[\cite{PollackShah} Theorem 1.1, Proposition 5.1]\label{pollackshahgsp6} \leavevmode
\begin{enumerate}
\item If $p$ is a finite odd prime and $\pi_p$ is unramified, let $v_0 \in \pi_p$ be a spherical vector and let $\Phi_p = {\rm char}(\Z_p^2)$; then, for any $(U_P, \chi)$-model $\Lambda$ for $\pi_p$, we have
\[ I_p(\Phi_p, v_0 , s) = \Lambda(v_0)  L(\pi_p,{\rm Spin},s).\]

\item If $\pi_p$ is ramified and $v_0$ is a vector in $\pi_p$, then there exists a vector $v$ in $\pi_p$ and a function $\Phi_p \in \mathcal{S}_0(\Q_p^2, \overline{\Q})$ such that for all $(U_P, \chi)$-models $\Lambda$ \[I_p(\Phi_p,v,s) = \Lambda(v_0). \]
\end{enumerate}
\end{proposition}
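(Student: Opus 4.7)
The statement is due to Pollack--Shah; we outline the strategy one would follow.

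For (1), the plan is to carry out the standard unramified local Rankin--Selberg calculation. Apply the Iwasawa decomposition $\H(\qp) = U_{B_\H}(\qp)\, T_{B_\H}(\qp)\, K_\H$ with $K_\H = \H(\Z_p)$ to rewrite the integral $I_p(\Phi_p, v_0, s)$ as an integral over $T_{B_\H}(\qp)/\Z_\G(\qp)$. Since $\Phi_p = \mathrm{char}(\Z_p^2)$, the section $f(\cdot, \Phi_p, s)$ is the normalized spherical vector in $\mathrm{Ind}_{B_{\GL_2}(\qp)}^{\GL_2(\qp)}(\delta_{B_{\GL_2}}^s)$, whose values on the torus are an explicit power of the similitude. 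Since $v_0$ is $K_\G$-fixed, hence $K_\H$-fixed, the function $g \mapsto \Lambda(g \cdot v_0)$ is right $K_\H$-invariant and so is determined by its restriction to the torus. The key step is then to compute $\Lambda(t \cdot v_0)$ for $t \in T_{B_\H}(\qp)$: this is a Casselman--Shalika-type computation for the generalized Whittaker model $(U_P,\chi)$, expressing the values in terms of the Satake parameters of $\pi_p$ and supported on a set of dominant cocharacters. Substituting back produces a geometric series whose sum is precisely $\Lambda(v_0)\, L(\pi_p, \mathrm{Spin}, s)$.

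For (2), the plan is a standard local approximation argument. Given $v_0$, first choose a smooth compactly supported function $\phi$ on $\H(\qp)$ supported in a small neighbourhood $\Omega$ of the identity, and set $v := \pi_p(\phi) v_0$, so that $\Lambda(v) = \int_\Omega \phi(g)\, \Lambda(g \cdot v_0)\, dg$ for any model $\Lambda$. Next, choose $\Phi_p \in \mathcal{S}_0(\Q_p^2, \overline{\Q})$ so that the section $f(\cdot, \Phi_p, s)$ has support on a small open neighbourhood of the identity coset in $B_{\GL_2}(\qp) \backslash \GL_2(\qp)$ (this is possible since vanishing of $\Phi_p$ at the origin translates to support conditions on the section). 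Then the integrand of $I_p(\Phi_p, v, s)$ is concentrated in a compact piece of $U_{B_\H}(\qp)\Z_\G(\qp) \backslash \H(\qp)$ on which the $\chi$-equivariance of $\Lambda$ makes $\Lambda(g \cdot v)$ an explicit scalar multiple of $\Lambda(v_0)$; normalizing $\phi$ and $\Phi_p$ appropriately yields $\Lambda(v_0)$ exactly and the equality holds uniformly in $\Lambda$.

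The main obstacle is the unramified calculation in (1): identifying that the Euler product produced by the torus integral is precisely the degree-$8$ Spin $L$-factor (rather than some other degree-$8$ Euler product) requires a careful matching between the combinatorial output of the $(U_P, \chi)$-model computation and the weights of the Spin representation of the dual group ${}^L\G = \GSpin_7(\C)$. This bookkeeping is the technical heart of \cite[Theorem 1.1]{PollackShah} and is what singles out $I_p(\Phi_p, v_0, s)$ as an integral representation of the Spin $L$-function, as opposed to the standard or adjoint one.
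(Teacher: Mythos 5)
The paper does not prove this proposition: it is stated as a direct citation to \cite{PollackShah} (Theorem~1.1 and Proposition~5.1) with no proof environment, and the surrounding text (Remark~\ref{remarkonprop5.1}, the proofs of Theorem~\ref{regulator3} and Corollary~\ref{regulator3coro}) uses it as a black box. So there is no in-paper proof to compare your sketch against; your proposal is a reconstruction of the external argument.

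On the merits of the reconstruction: your account of part (2) is essentially right. The argument in \cite[Proposition~5.1]{PollackShah} is the standard ``non-vanishing at a bad place'' lemma — shrink the support of the section via a suitable $\Phi_p$ and approximate $v_0$ by a vector obtained by averaging over a small compact open subgroup — and the specific choice $\Phi_p = \mathrm{char}((0,1)+p^n\Z_p^2)$ that Remark~\ref{remarkonprop5.1} records is exactly the kind of truncation you describe. For part (1), however, your framing as a ``Casselman--Shalika-type computation'' is an optimistic gloss. The $(U_P,\chi)$-model is attached to the non-regular unipotent orbit $(4\,2)$, not the regular orbit, so there is no off-the-shelf Casselman--Shalika formula giving $\Lambda(t\cdot v_0)$ in closed form on the torus, nor even a cheap uniqueness statement for the model; producing such explicit torus values (or circumventing the need for them) is precisely the technical content of \cite[Theorem~1.1]{PollackShah}, not an input to it. Your sketch correctly identifies \emph{that} the torus integral should collapse to a geometric series summing to $L(\pi_p,\mathrm{Spin},s)$, but presenting the evaluation of $\Lambda$ on the torus as a ``key step'' of Casselman--Shalika type understates (and slightly misrepresents) the actual work. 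If you want to flesh this out, the honest phrasing is that the unramified computation is the heart of \cite{PollackShah} and proceeds by a bespoke manipulation of the degenerate model rather than by an existing formula.
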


\begin{remark}\label{remarkonprop5.1}
As explained in the proof of \cite[Proposition 5.1]{PollackShah}, in the case of a finite bad place $p$, one can choose $\Phi_p$ to be ${\rm char}((0,1)+p^n\Z_p)$, with $n$ a suitable positive integer depending on $v_0$.
\end{remark}
Finally, consider the archimedean integral \begin{equation} \label{archi-integral}
I_\infty(\Phi_\infty,\Psi,s):=\int_{U_{B_\H}(\R)\Z_\G(\R) \backslash \H(\R)} f(h_1,\Phi_\infty,s) \Psi_{\chi, U_P}(h)dh,
\end{equation}
where $\Psi_{\chi, U_P}(h)$ denotes the restriction of the Fourier coefficient to $\H(\R)$ (embedded naturally into $\H(\A)$).
This integral has been studied in \cite{GanGurevich}, where it is shown that it can be made nonzero at arbitrary $s=s_0$ if one has some freedom on the choice of $\Phi_\infty$ and $\Psi_\infty$.

\subsection{The regulator computation}
We now state and prove one of the main results of this manuscript.

Let $\pi=\pi_\infty \otimes (\otimes^{'}_p \pi_p)$ be a cuspidal automorphic representation of $\G(\A)$ with trivial central character, such that \begin{itemize}
    \item[(DS)]  $\pi_\infty$ is a discrete series whose restriction to $\Sp_6(\R)$ is $\pi_\infty^{3,3} \oplus \bar{\pi}_\infty^{3,3}$,
    \item[(FC)] there exists a factorizable cusp form $\Psi=\Psi_\infty \otimes \Psi_f$ in the space of $\pi$ which admits a nonzero Fourier coefficient of type $(4 \, 2)$ associated with a real \'etale quadratic extension $F/\Q$, and such that $\Psi_\infty$ is a highest weight vector of the minimal $K_\infty$-type of $\pi_\infty^{3,3}$.
\end{itemize}
   
   \begin{remark}
    In view of Lemma \ref{Fouriercoeff}, the existence of a Fourier coefficient of type  $(4 \, 2)$ might be partly replaced by asking that $\pi$ lifts to a noncuspidal automorphic representation of the split $\mathbf{SO}_{12}(\A)$. Furthermore, notice that if $\pi$ does not support such a Fourier coefficient, the following results will still be true but all the terms will vanish.
   \end{remark}
   
Let $\H = \GL_2 \boxtimes \GL_{2,F}^*$ and fix a neat open compact subgroup $U=\prod_{p} U_p$ of $\G(\Af)$, for which $\Psi_f \in \pi_f^U$, and $\Phi_f =\otimes_p \Phi_p \in \Sc_0(\Af^2,\Q)^{V_1}$, with $V_1={\rm pr}_1(U \cap \H(\A_f))$, which satisfy the following. Let $\Sigma$ be a finite set of primes containing  $\infty$ and all the bad finite primes for $\pi$;
for every prime $p \not\in \Sigma$, we set $U_p=\G(\zp)$ and $\Phi_p={\rm char}(\Z^2_p)$. 

\begin{theorem} \label{regulator3} Assume (DS), (FC) and that $\Psi$ is invariant under $\G(\zp)$ for every $p \not \in \Sigma$. Then we have
\[ \langle r_\mathcal{D}({\rm Eis}_{\mathcal{M}}(\Phi_f)), \omega_{\Psi} \rangle = C_V \lim_{s \to 0} \left (I_\Sigma(\Phi_\Sigma, A \cdot\Psi,s)  { L}^\Sigma(\pi, {\rm Spin} ,s)\right),\]
where ${L}^\Sigma(\pi, {\rm Spin} ,s) = \prod_{p \not\in \Sigma} {L}(\pi_p, {\rm Spin} ,s) $, and 
\[ I_\Sigma(\Phi_\Sigma, A \cdot\Psi,s):=\int_{U_{B_\H}(\A_\Sigma)\Z_\G(\A_\Sigma) \backslash \H(\A_\Sigma)} f(h_1,\Phi_\Sigma,s) (A \cdot\Psi)_{\chi, U_P}(h)dh,
\]
the operator $A$ is defined in Lemma \ref{projectionlemma} and $C_V = \frac{h_V}{{\rm vol}(V)}$ are as in Theorem \ref{adelicintegralforgsp6}.
\end{theorem}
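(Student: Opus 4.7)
The plan is to start from the integral formula of Theorem \ref{adelicintegral} and then combine the unfolding of Proposition \ref{unfolding} with the local computations of Pollack--Shah recalled in Proposition \ref{pollackshahgsp6}. For $\Re(s)$ sufficiently large, I introduce the $s$-dependent global integral
\[
I(\Phi, A.\Psi, s) := \int_{\H(\Q)\Z_\G(\A) \backslash \H(\A)} E(h_1, \Phi, s)(A.\Psi)(h)\, dh.
\]
By Proposition \ref{KLF}, the Eisenstein series $E(h_1, \Phi, s)$ is holomorphic at $s=0$ with value $\log|u(\Phi_f)(h_1)|$, and since $A.\Psi$ is a smooth cusp form, $I(\Phi, A.\Psi, s)$ is holomorphic at $s=0$. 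Combined with Theorem \ref{adelicintegral}, this gives
\[
\langle r_\mathcal{D}({\rm Eis}_{\mathcal{M}}(\Phi_f)), \omega_{\Psi}\rangle = C_V \cdot I(\Phi, A.\Psi, 0) = C_V \lim_{s \to 0} I(\Phi, A.\Psi, s).
\]

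The next step is to factorise $I(\Phi, A.\Psi, s)$ into local pieces. For $\Re(s) \gg 0$, Proposition \ref{unfolding} rewrites it as
\[
I(\Phi, A.\Psi, s) = \int_{U_{B_\H}(\A) \Z_\G(\A) \backslash \H(\A)} f(h_1, \Phi, s)\, (A.\Psi)_{\chi, U_P}(h)\, dh,
\]
where hypothesis (FC) ensures the Fourier coefficient is non-zero. Although $(A.\Psi)_{\chi, U_P}$ is not, in general, Euler-factorisable, its local restrictions produce $(U_P, \chi)$-models at each place. Following the argument of \cite{PollackShah}, the unfolded integral decomposes into local integrals $I_v(\Phi_v, (A.\Psi)_v, s)$, which, for every finite $p \notin \Sigma$ where by hypothesis $\pi_p$ is unramified, $\Psi_p$ is spherical, and $\Phi_p = \mathrm{char}(\Z_p^2)$, are computed by Proposition \ref{pollackshahgsp6}(1) to equal $L(\pi_p, \mathrm{Spin}, s)$ times the value of the corresponding local model on the spherical vector. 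Collecting the unramified contributions gives
\[
I(\Phi, A.\Psi, s) = I_\Sigma(\Phi_\Sigma, A.\Psi_\Sigma, s)\cdot L^\Sigma(\pi, \mathrm{Spin}, s)
\]
as an identity of meromorphic functions of $s$, where $I_\Sigma$ is the integral over the places in $\Sigma$ defined in the theorem.

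Combining the two identities and taking the limit $s \to 0$ yields the formula of Theorem \ref{regulator3}. The main obstacle is the non-factorisability of $(A.\Psi)_{\chi, U_P}$: this is handled as in \cite{PollackShah} by writing the Fourier coefficient as a finite sum of factorisable terms, each of which admits an Euler-type factorisation to which Proposition \ref{pollackshahgsp6}(1) applies termwise. A secondary technicality is that the factorisation identity is initially established only for $\Re(s)$ in a right half-plane of absolute convergence for both the Eisenstein series and the local integrals; its validity at $s=0$ follows by meromorphic continuation, where the (potential) pole of $L^\Sigma(\pi, \mathrm{Spin}, s)$ at $s=0$ is compensated by a (possible) vanishing of $I_\Sigma(\Phi_\Sigma, A.\Psi_\Sigma, s)$, the product being holomorphic because $I(\Phi, A.\Psi, s)$ is.
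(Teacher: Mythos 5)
Your overall architecture matches the paper: start from Theorem \ref{adelicintegral}, unfold via Proposition \ref{unfolding}, feed the unramified local computation of Proposition \ref{pollackshahgsp6}(1), and finish by a limit at $s=0$. However, the step where you handle non-factorisability of the Fourier coefficient is not how the paper (nor, as far as I can tell, Pollack--Shah) proceeds, and as written it is a genuine gap. You claim that $(A.\Psi)_{\chi,U_P}$ can be ``written as a finite sum of factorisable terms, each of which admits an Euler-type factorisation''. There is no such decomposition available here without further input: this would require finite multiplicity (indeed essentially multiplicity one) for local $(U_P,\chi)$-models, which is precisely what fails or is unknown in this setting, and is the whole reason the $(4\,2)$ Fourier coefficient is not Euler-factorisable to begin with. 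Your sentence ``its local restrictions produce $(U_P,\chi)$-models at each place'' is the right germ of an idea, but it does not get you to a product of local integrals.

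The paper instead runs the semi-local peeling argument of Piatetski-Shapiro--Rallis. One introduces, for any finite set $S\supseteq\Sigma$, the integral $I_S(\Phi,\Psi',s)$ over $\A_S$, and proves the induction step $I_{S\cup\{p\}}(\Phi,\Psi',s)=L(\pi_p,\mathrm{Spin},s)\,I_S(\Phi,\Psi',s)$ for $p\notin S$. The key observation is that for each fixed $h\in\G(\A_S)$, the linear functional $\Lambda_h:\pi_p\to\C$, $\Lambda_h(v)=\varphi_p(v)_{\chi,U_P}(h)$ (where $\varphi_p$ is the $\G(\Z_p)$-equivariant map sending the spherical vector $v_0$ to $\Psi'$), \emph{is} a $(U_P,\chi)$-model for $\pi_p$, so Proposition \ref{pollackshahgsp6}(1) applies to it directly. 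The $L$-factor is therefore extracted one place at a time with no global factorisation of the Fourier coefficient ever needed. Taking the limit over increasing $S$ (valid in the region of absolute convergence) and then continuing to $s=0$ yields the statement. You should replace the ``finite sum of factorisable terms'' step with this semi-local induction; the rest of your sketch, including the identification of the left-hand side with $C_V\, I(\Phi,A.\Psi,0)$ and the limit argument, is in line with the paper.
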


\begin{proof}
This follows from Theorem \ref{adelicintegralforgsp6} and Proposition \ref{pollackshahgsp6}. For the sake of clarity, we give a sketch of its proof, following \cite{Piatetski-Shapiro-Rallis}.

Recall that, from Theorem \ref{adelicintegralforgsp6} and Proposition \ref{unfolding}, we have
\begin{align*}
   \langle r_\mathcal{D}({\rm Eis}_{\mathcal{M}}(\Phi_f)), \omega_{\Psi} \rangle &= C_{V} \int_{\H(\Q) \Z_\G(\A) \backslash \H(\A)} E(h_1,\Phi,0) \Psi'(h) dh \\
   &= C_{V} \int_{U_{B_\H}(\A)\Z_\G(\A) \backslash \H(\A)} f(h_1,\Phi,0) \Psi'_{\chi, U_P}(h)dh \\
   &=: C_{V} I(\Phi,\Psi',0),
\end{align*}
where we denote $C_V=\frac{h_{V}}{{\rm vol}(V)}$ and $\Psi' = A \cdot\Psi$.
   
We now study the quantity $I(\Phi,\Psi',0)$. Given a finite set of primes $S$ containing $\Sigma$, define 
\begin{equation} \label{semilcoaint} I_S(\Phi_S,\Psi',s):=\int_{U_{B_\H}(\A_S)\Z_\G(\A_S) \backslash \H(\A_S)} f(h_1,\Phi_S,s) \Psi'_{\chi, U_P}(h)dh,
\end{equation}
where $\Phi_S=\prod_{v \in S} \Phi_v$. Then, in the range of convergence \[ I(\Phi,\Psi',s)= \lim_{\Sigma \subseteq S}  I_S(\Phi_S,\Psi',s). \]
Notice that, by \cite[Theorem 2.7]{PollackShah}, if $p \not \in S$,
\[ I_{S \cup \{ p \}}(\Phi_{S \cup \{ p \}},\Psi',s)={L}(\pi_p, {\rm Spin} ,s)   I_S(\Phi_S,\Psi',s).\]  
Indeed, $I_{S \cup \{ p \}}(\Phi_{S \cup \{ p \}},\Psi',s) $ equals to
\[ \int_{U_{B_\H}(\A_{S})\Z_\G(\A_{S}) \backslash \H(\A_{S})}\!\! \! \! \! \! \! f(h_1,\Phi_{S},s) \left( \int_{U_{B_\H}(\qp)\Z_\G(\qp) \backslash \H(\qp)} \!\! \!\!\! \! \! f(h_{1,p},\Phi_p,s) \Psi'_{\chi, U_P}(h_p h)dh_p \right) dh. \]
As $p \not \in \Sigma$, $\pi_p$ is unramified at $p$. Fix a spherical vector $v_0$ for $\pi_p$; as $\Psi'$ is invariant under the action of $\G(\zp)$, there is a $\G(\zp)$-equivariant map $\varphi_p: \pi_p \to \pi$ sending $v_0$ to $\Psi'$. Thus, for a fixed $h \in \G(\A_S)$, the functional $\Lambda_h: \pi_p \to \C$ defined by $\Lambda_h(v):=\varphi_p(v)_{\chi, U_P}(h)$ is clearly a $(U_P,\chi)$-model and $\Lambda_h(h_p \cdot v_0)=\Psi'_{\chi, U_P}(h_p h)$. Proposition \ref{pollackshahgsp6}$(1)$ implies then
\begin{align*}
\int_{U_{B_\H}(\qp)\Z_\G(\qp) \backslash \H(\qp)} &{} \!\! \!\!\! \! \!f(h_{1,p},\Phi_p,s) \Psi'_{\chi, U_P}(h_p h)dh_p \\
&= \int_{U_{B_\H}(\qp)\Z_\G(\qp) \backslash \H(\qp)}\!\! \!\!\! \! \! f(h_{1,p},\Phi_p,s) \Lambda_h(h_p \cdot v_0) dh_p \\
&={L}(\pi_p, {\rm Spin} ,s)  \Lambda_h(v_0) \\
&= {L}(\pi_p, {\rm Spin} , s)  \Psi'_{\chi, U_P}(h), \end{align*}
which implies the desired equality. 

Taking the limit varying the set $S \supseteq \Sigma$, we get \[ I(\Phi,\Psi',0) = \lim_{s\to 0} \left( I_\Sigma(\Phi_\Sigma, \Psi',s) {L}^\Sigma(\pi, {\rm Spin} ,s) \right),\] 
which implies the result.
\end{proof}

The following Corollary studies further the finite integral $I_\Sigma(\Phi_\Sigma, A \cdot\Psi,s)$. For this, we fix the components at finite bad primes of our Schwartz function $\Phi$ and the level. If $p \in \Sigma$ is a finite place, we let  $\Phi_p={\rm char}((0,1)+p^n\Z_p^2)$, and \[U_p =\left\{g \in \G(\zp) \; : \; g \equiv I \; \text{( mod }p^n) \right\}, \] with $n$ the positive integer given by Proposition \ref{pollackshahgsp6}(2) and Remark \ref{remarkonprop5.1}.

\begin{corollary} \label{regulator3coro}
Let $\Psi$ and $\omega_{\Psi}$ be as in Theorem \ref{regulator3}. Then, there exist $\widetilde{\Psi}=\Psi_\infty \otimes \widetilde{\Psi}_f$ in $\pi$, with $\Psi_f$ and $\widetilde{\Psi}_f$ coinciding outside $\Sigma$, such that
\[ \langle r_\mathcal{D}({\rm Eis}_{\mathcal{M}}(\Phi_f)), \omega_{\widetilde{\Psi}} \rangle = C_V  \lim_{s\to 0} \left(I_\infty(\Phi_\infty, A \cdot \Psi, s)  {L}^\Sigma(\pi, {\rm Spin}, s)\right),\]
where 
\[I_\infty(\Phi_\infty,A \cdot \Psi,s) = \int_{U_{B_\H}(\R)\Z_\G(\R) \backslash \H(\R)} f(h_1,\Phi_\infty,s) (A \cdot \Psi)_{\chi, U_P}(h)dh.  \]
\end{corollary}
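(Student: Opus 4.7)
The starting point is Theorem \ref{regulator3}, which applies to any cusp form $\widetilde{\Psi}$ in $\pi$ agreeing with $\Psi$ outside $\Sigma$ and yields
\[ \langle r_{\mathcal{D}}({\rm Eis}_{\mathcal{M}}(\Phi_f)), \omega_{\widetilde{\Psi}} \rangle = C_V \lim_{s \to 0} \left( I_\Sigma(\Phi_\Sigma, A.\widetilde{\Psi}_\Sigma, s) \, L^\Sigma(\pi, {\rm Spin}, s) \right). \]
Our task is thus to construct $\widetilde{\Psi}_f$ coinciding with $\Psi_f$ outside $\Sigma$ such that $I_\Sigma(\Phi_\Sigma, A.\widetilde{\Psi}_\Sigma, s) = I_\infty(\Phi_\infty, A.\Psi, s)$, i.e.\ such that each finite bad prime $p \in \Sigma_f := \Sigma \setminus \{\infty\}$ contributes only a trivial local factor. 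The main obstacle is that the Fourier coefficient $(\cdot)_{\chi, U_P}$ is not a product of local functions, so $I_\Sigma$ cannot directly be expressed as a product of local integrals. We instead proceed by eliminating the bad finite primes one at a time, using Proposition \ref{pollackshahgsp6}(2) together with Remark \ref{remarkonprop5.1} at each step.

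Fix $p \in \Sigma_f$. For $h' \in \H(\A_{\Sigma \setminus \{p\}})$, define a linear functional on $\pi_p$ by
\[ \Lambda_{h'}(v) := (v \otimes \Psi^{(p)})_{\chi, U_P}(h'), \]
where $v \otimes \Psi^{(p)}$ denotes the global pure tensor obtained by replacing the $p$-component of $\Psi$ by $v \in \pi_p$ and leaving all other components intact. A short change of variables in the integral defining the Fourier coefficient shows that, for any cusp form $F$ on $\G(\A)$, $F_{\chi, U_P}(u_p g) = \chi(u_p) F_{\chi, U_P}(g)$ for every $u_p \in U_P(\Q_p) \subseteq U_P(\A)$; this readily entails that $\Lambda_{h'}$ is a $(U_P, \chi)$-model for $\pi_p$. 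Applying Proposition \ref{pollackshahgsp6}(2) and Remark \ref{remarkonprop5.1} with $v_0 = \Psi_p$ and the prescribed choice $\Phi_p = {\rm char}((0,1) + p^n \Z_p^2)$ furnishes a vector $\widetilde{v}_p \in \pi_p$ satisfying $I_p(\Phi_p, \widetilde{v}_p, s) = \Lambda(\Psi_p)$ for \emph{every} $(U_P, \chi)$-model $\Lambda$. Specialising to $\Lambda = \Lambda_{h'}$, and using that $h_p \in \H(\Q_p)$ and $h'$ commute as elements of $\H(\A)$, one obtains the local reduction
\[ \int_{U_{B_\H}(\Q_p) \Z_\G(\Q_p) \backslash \H(\Q_p)} f(h_{1,p}, \Phi_p, s) \, (\widetilde{v}_p \otimes \Psi^{(p)})_{\chi, U_P}(h_p h') \, dh_p = \Psi_{\chi, U_P}(h'). \]

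Iterating the construction over the bad finite primes $p_1, \ldots, p_k$ of $\Sigma_f$, we define $\widetilde{\Psi}_f$ by replacing the $p_i$-component of $\Psi_f$ by $\widetilde{v}_{p_i}$ for each $i$, leaving all other local components unchanged. At the $i$-th step, the relevant $(U_P, \chi)$-model is built from the partial modification obtained after steps $1, \ldots, i - 1$ rather than from $\Psi$ itself; the crucial point for the iteration is the uniformity in $\Lambda$ of the identity $I_{p_i}(\Phi_{p_i}, \widetilde{v}_{p_i}, s) = \Lambda(\Psi_{p_i})$ given by Proposition \ref{pollackshahgsp6}(2), which allows the same vector $\widetilde{v}_{p_i}$ to be used regardless of the evolving model. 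Telescoping the resulting identities collapses $I_\Sigma(\Phi_\Sigma, A.\widetilde{\Psi}_\Sigma, s)$ to $I_\infty(\Phi_\infty, A.\Psi, s)$, and substituting into Theorem \ref{regulator3} applied to $\widetilde{\Psi}$ yields the claim.
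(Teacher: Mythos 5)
Your proposal follows the same route as the paper's proof: define a $(U_P,\chi)$-model $\Lambda_{h'}$ on $\pi_p$ by inserting a local vector into the global Fourier coefficient, invoke Proposition~\ref{pollackshahgsp6}(2) together with Remark~\ref{remarkonprop5.1} to produce the replacement vector $\widetilde v_p$ with the identity $I_p(\Phi_p,\widetilde v_p,s)=\Lambda(\Psi_p)$ holding uniformly in the model $\Lambda$, and telescope over the finite primes of~$\Sigma$; this is exactly the argument in the paper (which phrases the model via a $\G(\Q_p)$-equivariant map $\varphi_p:\pi_p\to\pi$, an equivalent formulation), and you make the iteration more explicit than the paper does. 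The one small imprecision is that you write $\Psi$ rather than $A.\Psi$ in the middle of the argument (e.g.\ in the definition of $\Lambda_{h'}$ and the displayed local reduction), which would produce $I_\infty(\Phi_\infty,\Psi,s)$ instead of $I_\infty(\Phi_\infty,A.\Psi,s)$; since $A\in\mathcal{U}(\mathfrak{h})$ acts only on the archimedean component while the modification is entirely at finite places, this is harmless, but the cusp form carried through the iteration should be $A.\Psi$ throughout.
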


\begin{proof}

Let $p \in \Sigma$ be a finite place. As $\Psi$ is factorizable and $\pi_p$ irreducible, there exists a $\G(\Q_p)$-equivariant map $\varphi_p: \pi_p \to \pi$, which sends $v_p \mapsto A \cdot\Psi$, where the vector $v_p \in \pi_p$ denotes the local component of $A \cdot\Psi$. By the choice of $\Phi_p$,
Proposition \ref{pollackshahgsp6} $(2)$ and Remark \ref{remarkonprop5.1} produce a vector $\widetilde{v}_p$ in $\pi_p$ which depends on $v_p$, such that for all $(U_P, \chi)$-models $\Lambda$ of $\pi_p$ \[I_p(\Phi_p,\widetilde{v}_p,s) = \Lambda(v_p). \] For a fixed $h \in \G(\A_{\Sigma \smallsetminus \{ p \}})$, define the functional $\Lambda_h: \pi_p \to \C$ as $\Lambda_h(v):=\varphi_p(v)_{\chi, U_P}(h)$. This is a $(U_P,\chi)$-model for $\pi_p$ and $\Lambda_h(h_p \cdot v)=\varphi_p(v)_{\chi, U_P}(h_p h)$.  We have that, for this model $\Lambda_h$,  
\begin{align*}
 \int_{U_{B_\H}(\qp)\Z_\G(\qp) \backslash \H(\qp)}\!\! \!\!\! \! \! f(h_{1,p},\Phi_p,s) \Lambda_h(h_p \cdot \widetilde{v}_p) dh_p =  \Lambda_h(v_p) =  (A \cdot\Psi)_{\chi, U_P}(h). \end{align*}
Now one proceeds similarly to the proof of Theorem \ref{regulator3} to use this equality to show that \[I_\Sigma(\Phi_\Sigma, A \cdot\widetilde{\Psi},s) = I_\infty(\Phi_\infty,A \cdot \Psi,s),  \] where the cusp form $\widetilde{\Psi}$ coincides with $\Psi$ at $\infty$ and away from $\Sigma$, and has component $\widetilde{v}_p$ at each $p \in \Sigma$.
Finally, since $\tilde{\Psi}$ and $\Psi$ coincide almost everywhere as well as at $\infty$, the result follows from Theorem \ref{regulator3}.

\end{proof}

We conclude with two remarks on the nontriviality of the limiting value
\[\lim_{s\to 0} \left(I_\infty(\Phi_\infty, A \cdot \Psi, s)  {L}^\Sigma(\pi, {\rm Spin}, s)\right) \]
and its relation to Beilinson conjectures.

\begin{remark}\label{remarkontaylorexpansion}
The archimedean integral $I_\infty(\Phi_\infty,A \cdot\Psi,s)$ is expected to coincide with the archimedean factor $L_\infty(M(\pi_f), 3 + s)$ defined in Equation \eqref{EquationLinfty} of the completed motivic $L$-function, up to a non-zero constant depending only on the normalization of $\Psi_\infty$. This would imply by Lemma \ref{orderpole} that $I_\infty(\Phi_\infty,A \cdot\Psi,s)$ has a simple pole at $s=0$ under the mild assumption that $\pi_p$ is the Steinberg representation at some finite prime $p$. Recall from Remark \ref{equality-L-functions} that $L^\Sigma(\pi, \mathrm{Spin}, s) = L^\Sigma(M(\pi_f), s + 3)$. Let us write (cf. \S \ref{subsec:archimedeanLfunctionsandDelignecoho}) \[\Lambda(M(\pi_f), s) = L_\infty(M(\pi_f), s) L_\Sigma(M(\pi_f), s) L^\Sigma(M(\pi_f), s),\] where $L_\Sigma(M(\pi_f), s)$ denotes the product of the local $L$-factors over the ramified finite places of $M(\pi_f)$. The functional equation \eqref{EquationFEQ} then gives $\Lambda(M(\pi_f), s + 3) = \epsilon(M(\pi_f), s+3) \Lambda(M(\pi_f), 4 - s)$. Assuming that both $\Lambda(M(\pi_f), 4 - s)$ and $L_\Sigma(M(\pi_f), s + 3)$ have no pole at $s = 0$ then we have that $L^{\Sigma}(M(\pi_f), 3 + s)$ vanishes at $s=0$ with the same order as that of the pole of $L_\infty(M(\pi_f), s+ 3)$ at $s = 0$. Putting everything together, one deduces that $I_\infty(\Phi_\infty, A \cdot \Psi, s)$ has a simple pole at $s = 0$ and that $L^\Sigma(\pi, \mathrm{Spin}, s)$ has a simple zero at $s = 0$. Hence we have (up to the non-zero constant $C_V$ and the normalization of $\Psi_\infty$)
\begin{align*} \langle r_\mathcal{D}({\rm Eis}_{\mathcal{M}}(\Phi_f)), \omega_{\widetilde{\Psi}} \rangle &=  \lim_{s\to 0} \left(I_\infty(\Phi_\infty, A \cdot \Psi, s)  {L}^\Sigma(\pi, {\rm Spin}, s)\right) \\
&= \mathrm{Res}_{s = 0} (L_\infty(M(\pi_f), s+3)) \cdot \frac{d}{ds}|_{s = 0} L^\Sigma(M(\pi_f), s + 3) \\
&=  \frac{1}{2 \pi^6} \frac{d}{ds}|_{s = 0} L^\Sigma(M(\pi_f), s + 3),
\end{align*}
where the last equality follows from using \cite[Lemma 2.8]{CLRG2} to calculate the residue of the archimedean factor. This relates the complex regulator of our motivic classes to the special value at $s = 3$ of the motivic $L$-function. In particular, the dimension of motivic cohomology of $M(\pi_f)(3)$ is greater or equal to the order of vanishing of $L(M(\pi_f)(3), s) = L(M(\pi_f), s + 3)$ at $s= 0$, as predicted by Beilinson conjectures. The analogous expectation about $I_\infty(\Phi_\infty, A \cdot \Psi, s)$ in the case of $\GSp_4$ was confirmed in the articles \cite{Miyazaki}, \cite{Moriyama}.
\end{remark}

\begin{remark} It follows from \cite[Proposition 12.1]{GanGurevich} that the archimedean integral can be made nonzero at arbitrary $s=s_0$ if one has some freedom on the choice of $\Phi_\infty$ and $\Psi_\infty$. This shows that the archimedean integral does not vanish identically. However their result is not enough for our purposes as $\Psi_\infty$ is fixed to be in the minimal $K_\infty$-type of $\pi_\infty^{3,3}$. In similar cases (e.g. \cite{Moriyama}, \cite{PollackG2}), archimedean integrals are calculated once one has an explicit description of the model involved, by solving the system of differential equations deduced from Schmid operators. Since the dimension of Fourier coefficients of type $(4 \, 2)$ is infinite, it seems out of reach to have such explicit descriptions and unfortunately we have not been able to calculate the archimedean integral in the case of interest.  
\end{remark}

\subsection{A remark on the nonvanishing of the regulator}\label{subsub:non-vanishing}

We have the following direct consequence of Theorem \ref{regulator3} regarding the nonvanishing of motivic cohomology. \\

 Let $N$ denote the positive integer defined as the product of prime numbers $\ell$ such that $\pi_\ell$ is ramified.
The fact that $\pi_\infty$ is cohomological implies that there exists a number field $L$ whose ring of integers $\mathcal{O}_L$ contains the eigenvalues of the spherical Hecke algebra $\mathcal{H}^{sph, N}$ away from $N$ and with coefficients in $\mathbb{Z}$ acting on $\bigotimes'_{\ell \nmid N} \pi_\ell^{\G(\Z_\ell)}$. We let $\theta_\pi: \mathcal{H}^{sph, N} \to L$ denote the Hecke character of $\pi$ and we let $\mathfrak{m}_\pi := {\rm ker}(\theta_\pi)$.

\begin{corollary} \label{nonvanish}
We assume that there exists an automorphic representation $\pi$ of $\G(\A)$ and a cusp form $\Psi$ in $\pi$, which satisfy all the running assumptions of Theorem \ref{regulator3} and such that \[\lim_{s \to 0}\left(I_\infty(\Phi_\infty, A \cdot \Psi, s)  {L}^\Sigma(\pi, {\rm Spin}, s) \right) \ne 0. \]
Then, the class ${\rm Eis}_{\mathcal{M}}(\Phi_f) $ is nontrivial and thus the localization $H^{7}_\mathcal{M}(\Sh_\G(U), \overline{\Q}(4))_{\mathfrak{m}_\pi}$ is nonzero.
\end{corollary}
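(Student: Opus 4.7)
The plan is to combine Corollary \ref{regulator3coro} with equivariance properties of the regulator and the pairing under the spherical Hecke algebra.

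First I would invoke Corollary \ref{regulator3coro} to produce the cusp form $\widetilde{\Psi}=\Psi_\infty\otimes\widetilde{\Psi}_f$, equal to $\Psi$ outside $\Sigma$ and in particular unramified away from $\Sigma$ with the same Hecke eigenvalues as $\Psi$, so that
\[
\langle r_\mathcal{D}(\mathrm{Eis}_{\mathcal{M}}(\Phi_f)),\omega_{\widetilde{\Psi}}\rangle \;=\; C_V\,\lim_{s\to 0}\!\left(I_\infty(\Phi_\infty,A.\Psi,s)\,L^\Sigma(\pi,\mathrm{Spin},s)\right).
\]
By hypothesis the right-hand side is non-zero, hence the pairing $\langle r_\mathcal{D}(\mathrm{Eis}_{\mathcal{M}}(\Phi_f)),\omega_{\widetilde{\Psi}}\rangle$ is non-zero. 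Since $\omega_{\widetilde{\Psi}}$ is a well-defined rapidly decreasing closed form of Hodge type $(3,3)$ (so Proposition \ref{temperedpairing} applies with $p=4$, $n=6$), this immediately implies that $r_\mathcal{D}(\mathrm{Eis}_{\mathcal{M}}(\Phi_f))\neq 0$ in $H^7_\mathcal{D}(\Sh_\G(U)/\R,\R(4))\otimes\overline{\Q}$, and therefore $\mathrm{Eis}_{\mathcal{M}}(\Phi_f)\neq 0$ in $H^7_\mathcal{M}(\Sh_\G(U),\overline{\Q}(4))$, since the regulator is a well-defined linear map.

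The only remaining point is the refinement at the maximal ideal $\mathfrak{m}_\pi$. Here the key fact to exploit is that all the players are equivariant for the action of the spherical Hecke algebra $\mathcal{H}^{\mathrm{sph},N}$: motivic cohomology, Deligne--Beilinson cohomology via the explicit description of Theorem \ref{TheoremAHCcurrents}, the regulator map $r_\mathcal{D}$, and the pairing $\langle\,\cdot\,,\omega_{\widetilde{\Psi}}\rangle$. For a prime $\ell\nmid N$ and any $T\in\mathcal{H}^{\mathrm{sph},N}$, the form $\omega_{\widetilde{\Psi}}$ is an eigenvector of $T$ with eigenvalue $\theta_\pi(T)$ (this uses strong multiplicity one at unramified places and the factorisation of $\widetilde{\Psi}_f$ outside $\Sigma$), and consequently the linear functional $\langle\,\cdot\,,\omega_{\widetilde{\Psi}}\rangle$ factors through the localisation at $\mathfrak{m}_\pi$. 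Thus the non-vanishing of $\langle r_\mathcal{D}(\mathrm{Eis}_{\mathcal{M}}(\Phi_f)),\omega_{\widetilde{\Psi}}\rangle$ forces the image of $\mathrm{Eis}_{\mathcal{M}}(\Phi_f)$ in the localisation $H^7_\mathcal{M}(\Sh_\G(U),\overline{\Q}(4))_{\mathfrak{m}_\pi}$ to be non-zero, yielding the result.

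The main (mild) obstacle is the last Hecke-equivariance step: one must check that the pairing of Proposition \ref{temperedpairing}, constructed via tempered currents, is genuinely $\mathcal{H}^{\mathrm{sph},N}$-equivariant with the Hecke action on $\omega_{\widetilde{\Psi}}$ coming from its realisation inside $\mathrm{Hom}_{K_\infty}(\wedge^3\mathfrak{p}^+_\C\otimes\wedge^3\mathfrak{p}^-_\C,\pi_\infty^{3,3})\otimes\pi_f^U$. This is standard given the functoriality of tempered currents under finite étale correspondences (which extend to the toroidal boundary, cf. Proposition \ref{push}), but should be spelled out; no further analytic input is needed.
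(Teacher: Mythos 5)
Your proof is correct and matches the paper's intended (unwritten) argument: Corollary \ref{regulator3coro} produces $\widetilde{\Psi}$ with non-vanishing pairing, whence $r_\mathcal{D}(\mathrm{Eis}_{\mathcal{M}}(\Phi_f))\neq 0$ and so $\mathrm{Eis}_{\mathcal{M}}(\Phi_f)\neq 0$, and the Hecke-equivariance of $\langle\,\cdot\,,\omega_{\widetilde{\Psi}}\rangle$ with $\omega_{\widetilde{\Psi}}$ an $\mathcal{H}^{\mathrm{sph},N}$-eigenform with character $\theta_\pi$ makes the functional factor through the localisation at $\mathfrak{m}_\pi$. One minor correction: strong multiplicity one is not what makes $\omega_{\widetilde{\Psi}}$ a Hecke eigenform away from $\Sigma$ --- this follows simply because $\widetilde{\Psi}$ lies in the space of $\pi$ and has spherical local components outside $\Sigma$ by the construction in Corollary \ref{regulator3coro}.
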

 
By assuming spin functoriality for $\pi$ and by using a generalization of the prime number theorem due to Jacquet and Shalika \cite{JacquetShalika}, we can improve the corollary by establishing that the nonvanishing of our pairing relies on the nonvanishing of the archimedean integral, as we now explain.

Let ${\rm Spin}:{}^L\G \to {}^L\GL_8$ denote the homomorphism between $L$-groups induced by the spin representation ${\rm Spin}: \GSpin_7 \to \GL_8$.
Roughly, Langlands' functoriality would predict the existence of a \emph{spin lift} to $\GL_8$ for $\pi$, i.e. the existence of an automorphic representation $\Pi$ of $\GL_8(\A)$ whose $L$-parameter $\phi_{\Pi_v}$ at each place $v$ is obtained from composing the $L$-parameter $\phi_{\pi_v}$ of $\pi_v$ with ${\rm Spin}$, thus implying that \[ L(\pi, {\rm Spin},s)=L(\Pi,s), \]
where the latter denotes the standard $L$-function associated with $\Pi$.
\begin{remark} \label{nonvanish2}
 In \cite{KretShin}, a \emph{potential} version of spin functoriality is discussed and proved. If we further assume that $\pi_\infty$ is spin-regular (cf. \cite{KretShin}), the result \cite[Theorem C]{KretShin}, which builds upon \cite[Theorem A]{potentialautomorphy}, produces
a cuspidal automorphic representation $\Pi$ of $\GL_8(\A_F)$, over a finite totally real extension $F/\Q$, with the desired properties. For instance, at each finite place $v$ of $F$ above an odd prime $p \not\in \Sigma$ one has $L(\Pi_v,s)=L(\pi_p,{\rm Spin}, s)$.
 \end{remark}

Before stating our final result, we need to fix the components at bad finite primes of the Schwartz function $\Phi$ appropriately. In contrast to Corollary \ref{regulator3coro}, if $p \in \Sigma$ is a finite place, we let $\Phi_p(-x,-y)$ be the Fourier transform of ${\rm char}((0,1)+p^n\Z_p^2)$, with $n$ the positive integer given by Proposition \ref{pollackshahgsp6} (2) and Remark \ref{remarkonprop5.1} so that for any vector $v\in \pi_p$ and $(U_P, \chi)$-model $\Lambda$, there exists $\tilde{v} \in \pi_p$ such that \[I_p(\widehat{\Phi}_p, \tilde{v},s) = \Lambda(v). \]

\begin{corollary} \label{nonvanish1}
Let $\Psi$ and $\omega_{\Psi}$ be as in Theorem \ref{regulator3}.
Suppose that $\pi$ admits a cuspidal spin lift to $\mathbf{GL}_8$ and $I_\infty(\Phi_\infty, A \cdot\Psi, 1) \ne 0$. Then 
$$
\langle r_\mathcal{D}({\rm Eis}_{\mathcal{M}}(\Phi_f)), \omega_{\widetilde{\Psi}} \rangle =  C_V \cdot I_\infty({\Phi}_\infty, A \cdot\Psi, 1) \cdot {L}^\Sigma(\pi, \mathrm{Spin}, 1)
$$
and it is nonzero. In particular, the class ${\rm Eis}_{\mathcal{M}}(\Phi_f) $ is nonzero and thus the localization $H^{7}_\mathcal{M}(\Sh_\G(U), \overline{\Q}(4))_{\mathfrak{m}_\pi}$ is nonzero. 
\end{corollary}
\begin{proof}
Recall that the Eisenstein $E(g,\Phi,s)$ satisfies the  functional equation \[E(g,\Phi,s)= E(g,\widehat{\Phi},1-s), \]
where $\widehat{\Phi}$ denotes the Fourier transform of $\Phi$. 
This implies that \[I(\Phi,\Psi,s)=I(\widehat{\Phi},\Psi,1-s)=I_\Sigma(\widehat{\Phi}_\Sigma,\Psi,1-s)L^\Sigma(\pi,{\rm Spin}, 1-s). \]
Thanks to our choice of $\Phi$, and mimicking the proof of Corollary \ref{regulator3coro}, we know that there exists $\tilde{\Psi}$ in the space of $\pi$, coinciding with $\Psi$ outside of $\Sigma$, such that  \[I_\Sigma(\widehat{\Phi}_\Sigma,A \cdot \tilde{\Psi},s) =  I_\infty(\Phi_\infty, A \cdot \Psi,s),\]
where we have used that $\widehat{\Phi}_\infty=\Phi_\infty$. Let now $\Pi$ be a cuspidal spin lift of $\pi$ to $\mathbf{GL}_8$. As $L^\Sigma(\pi, \mathrm{Spin}, s) = L^\Sigma(\Pi, s)$, then it does not have a pole at $s = 1$. Combining these facts with Theorem \ref{regulator3}, we get that 
\begin{align*}
    \langle r_\mathcal{D}({\rm Eis}_{\mathcal{M}}(\Phi_f)), \omega_{\widetilde{\Psi}} \rangle &= C_V \cdot  \lim_{s \to 0} \left(I_\Sigma(\widehat{\Phi}_\Sigma,A \cdot \widetilde{\Psi},1-s) L^\Sigma(\pi,{\rm Spin}, 1-s)\right) \\ &=  C_V \cdot  \lim_{s \to 0} \left(I_\infty(\Phi_\infty, A \cdot \Psi, 1-s) L^\Sigma(\pi,{\rm Spin}, 1-s)\right) \\
    &= C_V \cdot I_\infty(\Phi_\infty, A \cdot \Psi, 1) L^\Sigma(\pi, \mathrm{Spin}, 1),
\end{align*} 
proving the first equality of the statement.

Finally, we claim that $L^\Sigma(\pi,{\rm Spin}, s)=L^\Sigma(\Pi, s)$ is non-zero at $s = 1$. In \cite{JacquetShalika}, it is shown that $L(\Pi,s) \ne 0$ for any $s$ with ${\rm Re}(s)=1$. Writing \[L^\Sigma(\Pi,s) = L(\Pi,s) \prod_{p \in \Sigma} L(\Pi_p,s)^{-1}, \]
our claim follows from the fact that each Euler factor $L(\Pi_p,s)$ has no pole in the region ${\rm Re}(s)\geq \frac{1}{2}$ (e.g. \cite[p. 317]{RudnickSarnak}). The result follows.
\end{proof}

\bibliographystyle{alpha}
\bibliography{bibliography}
\end{document}